\newcommand{\eqdef}{=}
\newcommand{\eqsp}{\;}
\newcommand{\chunk}[3]{#1_{#2:#3}}
\def\rset{\mathbb{r}}
\def\zset{\mathbb{z}}
\newcounter{hypA}
\newenvironment{hypA}{\refstepcounter{hypA}\begin{itemize}
  \item[({\bf A\arabic{hypA}})]}{\end{itemize}}
\def\rmd{\mathrm{d}}
\def\ie{\emph{i.e.}}
\def\cd{\stackrel{d}{\longrightarrow}}
\def\eg{\emph{e.g.}}
\def\PE{\mathbb{E}}
\def\PP{\mathbb{P}}
\newcommand{\SkhSpace}[1]{\mathsf{D}(#1)}
\newcommand{\supnorm}[1]{\left\| #1 \right\|_\infty}
\def\Cov{\mathrm{Cov}}
\def\Var{\mathop{\rm Var}\nolimits}
\def\rmd{\mathrm{d}}
\newcommandtwoopt{\QRC}[4][][]
{\ifthenelse{\equal{#1}{}}
{\ifthenelse{\equal{#2}{}}{\mathrm{Q}_{#3}\left( #4\right)}{\mathrm{Q}_{#3}\left(#4,#2\right)}}
{\ifthenelse{\equal{#2}{}}{\mathrm{Q}^{#1}_{#3}\left(#4\right)}{\mathrm{Q}^{#1}_{#3}\left(#4,#2\right)}
}
}
\def\IF{\mathrm{IF}}
\newtheorem{theo}{Theorem}
\newtheorem{prop}[theo]{Proposition}
\newtheorem{lemma}[theo]{Lemma}
\theoremstyle{remark}
\newtheorem{remark}{Remark}
\def\rset{\mathbb{R}}
\def\zset{\mathbb{Z}}
\def \1{\mathbbm{1}}
\def\B{\textrm{B}}
\title[Robust estimation of the autocovariance function]
{Robust estimation of the scale and of the autocovariance function of
  Gaussian short and long-range dependent processes}
\author{C. L\'evy-Leduc}
\author{H. Boistard}
\author{E. Moulines}
\author{M. S.  Taqqu}
\author{V. A. Reisen}
\address{CNRS/LTCI/Telecom ParisTech - 46, rue Barrault, 75634 Paris C\'edex 13, France.}
\email{celine.levy-leduc@telecom-paristech.fr}
\address{GREMAQ, Universit\'e Toulouse 1 - Manufacture des Tabacs,
  b\^at. F, aile J.J. Laffont - 21 all\'ee de Brienne - 31000
  Toulouse}
\email{helene@boistard.fr}
\address{Institut Telecom/Telecom ParisTech - 46, rue Barrault, 75634 Paris C\'edex 13, France.}
\email{eric.moulines@telecom-paristech.fr}
\address{Department of Mathematics, Boston University, 111 Cumington
  Street, Boston, MA 02215, USA}
\email{murad@bu.edu}
\address{Departamento de Estat\'istica, Universidade Federal do
  Esp\'irito Santo, Vit\'oria/ES, Brazil}
\email{valderio@cce.ufes.br}
\keywords{autocovariance function, long-memory, robustness, influence
  function, scale estimator, Hadamard differentiability, functional
  Delta method.}
\date{\today}
\begin{document}

\maketitle

\begin{abstract}
A desirable property of an  autocovariance estimator is to be robust
to the presence of  additive outliers. It is well-known that the sample
autocovariance, being based on moments, does not have this property.
Hence, the use of an autocovariance estimator
which is robust to additive outliers can be very useful for time-series modeling.
In this paper, the asymptotic properties of the robust scale and
autocovariance estimators proposed by \cite{Rousseeuw:Croux:1993} and
\cite{Genton:Ma:2000} are established for Gaussian processes, with either
short-range or long-range dependence. It is shown in the short-range dependence setting that this robust
estimator is asymptotically normal at the rate $\sqrt{n}$, where $n$ is the number of observations. An explicit
expression of the asymptotic variance is also given and compared to the asymptotic variance of the classical autocovariance
estimator. In the long-range dependence setting, the limiting
distribution displays the same behavior than that of the classical
autocovariance estimator, with a Gaussian limit and rate $\sqrt{n}$ 
when the Hurst parameter $H$ is less $3/4$ and with a non-Gaussian
limit (belonging to the
second Wiener chaos) with rate depending on the Hurst parameter when $H \in (3/4,1)$.
Some Monte-Carlo experiments are presented to illustrate our claims and the Nile River data is analyzed
as an application. The theoretical results and the empirical evidence strongly suggest
using the robust estimators as an alternative to estimate
the dependence structure of Gaussian processes.
\end{abstract}

\section{Introduction}

The autocovariance function of a stationary process  plays a key role in time series analysis.
However, it is well known that the classical sample autocovariance function is very sensitive
to the presence of additive outliers in the data.  A small fraction of additive outliers,
in some cases even a single outlier, can affect the classical autocovariance estimate making it virtually useless;
see for instance \cite{deutsch:1990}
\cite{chan:1992}, \cite{chan:1995} \cite[Chapter 8]{maronna:martin:yohai:2006} and the
references therein. Since additive outliers are quite
common in practice, the definition of an autocovariance estimator which is robust to the presence of additive outliers 
is an important task.

\cite{Genton:Ma:2000} proposed a robust  estimator
of the autocovariance function and discussed
its performance on synthetic and real data sets. This estimator has later
been used by \cite{fajardo:reisen:cribari:2009} to derive
robust estimators for ARMA and ARFIMA models.

The autocovariance estimator proposed by \cite{Genton:Ma:2000} is
based on a method due to \cite{gnanadesikan:kettenring:1972}, which
consists in estimating the covariance of the random variables $X$ and $X'$
by comparing the scale of two appropriately chosen linear
combinations of these variables; more precisely, if $a$ and $b$ are non-zero, then
\begin{equation}\label{eq:cov}
\Cov(X,X')= \frac{1}{4ab}\left\{\Var(a X+ b X')-\Var(a X- b X')\right\}\; .
\end{equation}
 Assume that $\mathrm{S}$ is a robust scale functional; we write for
 short $\mathrm{S}(X)= \mathrm{S}(F_X)$, where $F_X$ is the c.d.f of
 $X$ and assume that $\mathrm{S}$ is \emph{affine equivariant}
 in the sense that $\mathrm{S}(aX+b)= |a| \mathrm{S}(X)$. Following
 \cite{huber:1981}, if we replace in the above expression
 $\Var(\cdot)$ by $\mathrm{S}^2(\cdot)$, 
then \eqref{eq:cov} is turned into the definition of a robust alternative to the covariance
 \begin{equation}
 \label{eq:robust-covariance-from-scale}
 \mathrm{C}_{\mathrm{S}} (X,X') = \frac{1}{4ab} \left\{ \mathrm{S}^2(aX+bX') - \mathrm{S}^2(aX-bX')\right\} \eqsp.
 \end{equation}
 The constants $a$ and $b$ can be chosen arbitrarily.
If $X$ and $X'$ have the same scale (\eg\ the same marginal
distribution), one could simply take $a=b=1$.
 \cite{gnanadesikan:kettenring:1972} suggest to take $a$ and $b$
 proportional to the inverse of $\mathrm{S}(X)$ and $\mathrm{S}(X')$,
 respectively in order to standardize $X$ and $X'$. 
As explained in \cite{huber:1981}, if $\mathrm{S}$ is standardized
such that $\mathrm{S}(X)=1$ 
in the case where $X$ is standard Gaussian, then, provided that
$(X,X')$ is bivariate normal,
\begin{equation}\label{e:CSC}
\mathrm{C}_{\mathrm{S}}(X,X')= \Cov(X,X')\;. 
\end{equation}
In this case indeed, 
$aX+bX'$ and $aX-bX'$ are Gaussian random variables 
with variance $\sigma^2_{\pm}=a^2 \Var(X) \pm 2 a b \Cov(X,X') + b^2 \Var(X')$,
and so, if $Y\sim\mathcal{N}(0,1)$, then
$S(aX\pm bX')=S(\sigma_{\pm} Y)=\sigma_{\pm} S(Y)=\sigma_{\pm}$ and
$S^2(aX+bX')-S^2(aX-bX')=\sigma^2_+-\sigma^2_-=4ab \Cov(X,X')$
yielding $C_S(X,X')=\Cov(X,X').$

\cite{Genton:Ma:2000} suggested to use for $\mathrm{S}$ the robust
scale estimator  introduced in \cite{Rousseeuw:Croux:1993}.
This scale estimator is based on the Grassberger-Procaccia correlation integral, defined as
\begin{equation}\label{eq:U}
r \mapsto U(r,F_X)= \iint \1_{\{ |x-x'| \leq r \}} \rmd F_X(x) \rmd F_X(x') \eqsp,
\end{equation}
which measures the probability that two independent copies $X$ and $X'$ distributed according to $F_X$ fall at a distance smaller than $r$.
The robust scale estimator introduced in \cite[p. 1277]{Rousseeuw:Croux:1993}
defines the scale $Q(F_X)$ of a c.d.f. $F_X$ as being
proportional to the first quartile of $r \mapsto U(r,F_X)$, namely,
\begin{equation}\label{eq:def:Q}
Q(F_X) = c(F_X)  \inf \left\{ r \geq 0, U(r,F_X)\geq 1/4 \right\} \eqsp,
\end{equation}
where $c(F_X)$ is a constant depending only on the shape of the
c.d.f. $F_X$.
We see immediately that $Q(F_X)$ is affine invariant, in the sense that transforming $X$ into
$a X + b$, will multiply $Q(F_X)$ by $|a|$ .
This scale can be seen as an analog of the Gini average difference
estimator $n^{-1}(n-1)^{-1}\sum_{1\leq i\neq j\leq n}|X_i-X_j|$, where the average is replaced by
a quantile. It is worth noting that instead of measuring how far away the observations are from a central value,
$Q(F_X)$ computes a typical distance between two independent copies of
the random variable $X$, which leads to a reasonable estimation of the
scale even when the c.d.f. $F_X$ is not symmetric.

The constant $c(F_X)$ in (\ref{eq:def:Q}) is there to ensure
consistency. In the sequel, the c.d.f. $F_X$ is assumed to belong to the Gaussian
location-scale family
\begin{equation}\label{eq:loc_scale}
\{\Phi_{\mu,\sigma}(\cdot)=\Phi((\cdot-\mu)/\sigma),\ \mu\in\rset,\sigma\in\rset^*_+\}\; ,
\end{equation}
where $\Phi$ is the  c.d.f. of a standard Gaussian random variable.
The reason we focus on the Gaussian family 
is that if we want to use $Q$ as the scale $S$ in (\ref{eq:robust-covariance-from-scale}), we
will need to compute $c(F_{aX+bX'})$ and $c(F_{aX-bX'})$.
This is easily done when $(X,X')$ is a Gaussian vector. Indeed, in view of
(\ref{e:CSC}), one has
\begin{equation}\label{e:CovQ}
\Cov(X,X')=\frac14\left[Q^2(F_{X+X'})-Q^2(F_{X-X'})\right]\; ,
\end{equation}
and in particular, since by (\ref{eq:U}) and (\ref{eq:def:Q}),
$Q^2(F_{2X})=(2Q(F_X))^2$, 
\begin{equation}\label{e:VarQ}
\Var(X)=Q^2(F_X)\; .
\end{equation}
When $F_X=\Phi_{\mu,\sigma}$ we can then obtain the constant
$c(\Phi_{\mu,\sigma})$ in (\ref{eq:def:Q}) explicitly as noted by 
\cite{Rousseeuw:Croux:1993}. Since $Q(\Phi_{\mu,\sigma})=\sigma$,
(\ref{eq:def:Q}) becomes 
\begin{equation}\label{e:cr}
\sigma=Q(\Phi_{\mu,\sigma})=c(\Phi_{\mu,\sigma})\sigma r_0
\end{equation}
where $r_0$ is such that, in (\ref{eq:U}), $U(r_0,\Phi)=1/4$.
Hence for all $(\mu,\sigma) \in \rset \times \rset_+^*$,
\begin{equation}\label{eq:c(Phi)}
c(\Phi_{\mu,\sigma})= c(\Phi)=1/r_0=1/(\sqrt{2}\Phi^{-1}(5/8))=2.21914\; .
\end{equation} 

Let $(X_i)_{i \geq 1}$ be a stationary Gaussian process.
Given the observations $\chunk{X}{1}{n}= (X_1,\dots,X_n)$, the
c.d.f. of the observations may be estimated using the empirical c.d.f.
$r \mapsto F_n(r)= n^{-1} \sum_{i=1}^n \1_{ \{X_i \leq r \}}$. Plugging $F_n$ into \eqref{eq:def:Q} leads to the
following robust scale estimator
\begin{equation}
\label{eq:definition-QRC}
\QRC[][\Phi]{n}{\chunk{X}{1}{n}} = c(\Phi) \{|X_i-X_j|;\ 1 \leq i,j \leq n\}_{(k_n)}\; ,
\end{equation}
where $k_n = \lfloor n^2/4 \rfloor$.
That is, up to the multiplicative constant $c(\Phi)$, 
$\QRC[][\Phi]{n}{\chunk{X}{1}{n}}$ is the $k_n$th order statistics of the
$n^2$ distances $|X_i-X_j|$ between all the pairs of
observations. 

As mentioned by \cite{Rousseeuw:Croux:1993},  $\QRC[][\Phi]{n}{\chunk{X}{1}{n}}$
has several appealing properties: it has a simple and explicit formula
with an intuitive meaning; it has the highest possible
breakdown point (50$\%$); in addition, the associated influence function (see below) is bounded.
For a definition of these quantities, which are classical in robust statistics, see for instance \cite{huber:1981}.
The scale estimator of Rousseeuw and Croux is also attractive because it can be implemented very efficiently;
it can be computed with a time-complexity of order $O(n\log n)$ and with a storage scaling linearly  $O(n)$;
see \cite{Rousseeuw:Croux:1992} for implementation details.

Using the robust scale estimator $\QRC[][\Phi]{n}{\cdot}$ in
(\ref{eq:definition-QRC}) and the identity
(\ref{eq:robust-covariance-from-scale}) with $a=b=1$, the robust autocovariance estimator
of 
$$
\gamma(h)=\Cov(X_1,X_{h+1})=\frac14\{\Var(X_1+X_{h+1})-\Var(X_1-X_{h+1})\}
$$
is
\begin{equation}\label{def:gamma_Q_init}
\widehat{\gamma}_Q(h,\chunk{X}{1}{n},\Phi)=\frac{1}{4}\left\{\QRC[2][\Phi]{n-h}{\chunk{X}{1}{n-h}+\chunk{X}{h+1}{n}}
- \QRC[2][\Phi]{n-h}{\chunk{X}{1}{n-h}-\chunk{X}{h+1}{n}}\right\}\; \eqsp.
\end{equation}
Thus, in the sample version (\ref{def:gamma_Q_init}), the random
variable $X_1\pm X_{h+1}$
is replaced by the vector $\chunk{X}{1}{n-h}\pm\chunk{X}{h+1}{n}$ of
length $n-h$.

In this paper, we establish the asymptotic properties of $\QRC[][\Phi]{n}{\chunk{X}{1}{n}}$ and the
corresponding robust autocovariance estimator $\widehat{\gamma}_Q(h,\chunk{X}{1}{n},\Phi)$ for Gaussian
processes displaying both short-range and long-range dependence. We say that the process is short-range dependent if
the autocovariance function $\{ \gamma(k) \}_{k \in \zset}$ is
absolutely summable, $\sum_{k \in \zset} |\gamma(k)| < \infty$. We say
that it is long-range dependent if the 
autocovariance function is regularly varying at infinity with exponent
$D$, $\gamma(k)= k^{-D} L(k)$ with $0 < D < 1$ and  $L$ is a slowly varying
function, 
\ie\ $\lim_{k \to \infty} L(ak)/L(k)$ for any $a > 0$, and is positive
for large enough $k$. The exponent $D$ is related to the 
so-called \emph{Hurst coefficient} by the relation $H= 1-D/2$.
See, for more details, \cite[p. 5--38]{doukhan:oppenheim:taqqu:2003}.

The limiting distributions of these estimators are obtained by using
the functional  delta method;
see \cite{vdv:2000}. In the short memory case, the results stems
directly from the weak 
invariance principle satisfied by the empirical process $F_n$ under
mild technical assumptions. 
The rate of convergence of the robust covariance estimator is
$\sqrt{n}$ and the limiting 
distribution is Gaussian; an explicit expression of the asymptotic
variance 
is given in Theorem~\ref{theo:gamma_short}.

In the long memory case, the situation is more involved. 
When $D \geq 1/2$ (or $H \leq 3/4$), the rate of convergence is 
still $\sqrt{n}$, the limiting distribution is Gaussian and the 
asymptotic variance of the covariance estimator is the same as in 
the short-memory case. When $0 < D < 1/2$, the rate of
convergence becomes equal to $n^D/\widetilde{L}(n)$ where $\widetilde{L}$ is a 
slowly varying function defined in \eqref{eq:expression-tildeL}; the
limiting distribution is non-Gaussian and belongs to the second Wiener
Chaos; see Theorem~\ref{theo:gamma_long}. We prove
that these rates are identical to the ones of the classical
autocovariance estimators. 

The study of the
asymptotic distribution of the empirical process is not enough
to derive these results.
It is necessary to use results on the empirical version of the correlation integral
which requires extensions of the results derived for $U$-processes
under short-range dependence conditions by
\cite{borovkova:burton:dehling:2001}. For this part, we use
novel results on $U$-processes of long-memory time-series that
are developed in a companion paper \cite{boistard:levy:2009}.

The outline of the paper is as follows. In Section \ref{sec:theory},
the limiting distributions of the robust scale estimator
$\QRC[][\Phi]{n}{\chunk{X}{1}{n}}$ in the Gaussian short-range 
and long-range dependence settings are proved.
From these results, the asymptotic distribution of
$\widehat{\gamma}_Q(h,\chunk{X}{1}{n},\Phi)$ is derived. In Section
\ref{sec:exp}, some Monte-Carlo experiments are presented in order to
support our theoretical claims. The Nile River data is studied
as an application in Section \ref{sec:appli}. Section \ref{s:U-process} is
dedicated to the asymptotic properties of $U$-processes which are
useful to establish the results of Section \ref{sec:theory} in the
long-range case. Sections
\ref{sec:proofs} and \ref{sec:lemmas} detail the proofs
of the theoretical results stated in Section \ref{sec:theory}.
Some concluding remarks are provided in Section \ref{sec:conc}.\ \\

\noindent\textbf{Notation.}
For an interval  $I$ in the extended real line $[-\infty,\infty]$, we denote by $\SkhSpace{I}$ the set of all functions $z: I \to \rset$ that are right-continuous and whose limits from the left exist everywhere on $I$.
We always equip $\SkhSpace{I}$ with the uniform norm, denoted by $\supnorm{\cdot}$.
We denote by $\mathcal{M}([-\infty,\infty]) $ the set of cumulative distribution functions on $[-\infty,\infty]$
equipped with the topology of uniform convergence. 
For $U \in \SkhSpace{I}$, let $U^{-1}$ denote its generalized inverse, 
$U^{-1}(\eta) \eqdef \inf \{ r \in I, U(r)\geq \eta \}$.

The convergence in distribution in
$\left(\SkhSpace{[0,\infty]},\supnorm{\cdot}\right)$ is meant 
with respect to the $\sigma$-algebra generated by the set of open
balls. We denote by $\cd$ 
the convergence in distribution. 

We denote by $\Phi$ the c.d.f of the standard Gaussian random variable
and by $\phi$ the corresponding density function.

\section{Theoretical results} \label{sec:theory}

Define the following mappings:
\begin{eqnarray}
T_1: &\mathcal{M}([-\infty,\infty])&\to \SkhSpace{[0,\infty]}\nonumber\\
&F&\mapsto \left\{r \mapsto \int_{\rset} \int_{\rset} \1_{\{\vert x-y\vert\leq
    r\}}\rmd F(x)\rmd F(y)\right\}\; ,\label{eq:definition-T1}\\
T_2: & \SkhSpace{[0,\infty]}&\to \rset\nonumber\\
&U&\mapsto U^{-1}(1/4)\; .\label{eq:definition-T2}
\end{eqnarray}
and
\begin{eqnarray}\label{eq:definition-T0}
T_0=T_2 \circ T_1: &\mathcal{M}([-\infty,\infty]))&\to\rset\\
& F&\mapsto U^{-1}(1/4)\; .
\end{eqnarray}
Then, the scale estimator $\QRC[][\Phi]{n}{\chunk{X}{1}{n}}$ introduced in
(\ref{eq:definition-QRC}) may be expressed as
\begin{equation}
\label{eq:definition-Qn}
\QRC[][\Phi]{n}{\chunk{X}{1}{n}}= c(\Phi) T_0(F_n)\; ,
\end{equation}
where $F_n$ is the empirical c.d.f. based on $\chunk{X}{1}{n}$. 

\subsection{Short-range dependence setting}

\subsubsection{Properties of the scale estimator}
The following lemma gives an asymptotic expansion for $\QRC[][\Phi]{n}{\chunk{X}{1}{n}}$,
which is used for deriving a Central Limit Theorem
(Theorem~\ref{theo:short-range}). It supposes that the empirical
c.d.f. $F_n$, adequately normalized, converges.


\begin{lemma}\label{prop:asym_expansion}
Let $(X_i)_{i\geq 1}$ be a stationary Gaussian process.
Assume that there exists a non-decreasing sequence $(a_n)$ such that
$a_n(F_n-\Phi_{\mu,\sigma})$ converges weakly in $\left(\SkhSpace{[0,\infty]},\supnorm{\cdot}\right)$.
Then, $\QRC[][\Phi]{n}{\chunk{X}{1}{n}}$ defined by \eqref{eq:definition-QRC} has
the following asymptotic expansion:
\begin{equation}\label{expansion:short-range}
a_n\left(\QRC[][\Phi]{n}{\chunk{X}{1}{n}}-
  \sigma \right)
= \frac{a_n}{n}\sum_{i=1}^n \IF(X_i,Q,\Phi_{\mu,\sigma})+ o_P(1)\; ,
\end{equation}
where, for all $x$ in $\rset$,
\begin{equation}\label{IF:Q_1}
\IF(x,Q,\Phi_{\mu,\sigma})=\sigma\IF((x-\mu)/\sigma,Q,\Phi)\; ,
\end{equation}
and
\begin{equation}\label{IF:Q}
\IF(x,Q,\Phi)
=c(\Phi)\left(\frac{1/4-\Phi(x+1/c(\Phi))
+\Phi(x-1/c(\Phi))}{\int_{\rset} \phi(y)\phi(y+1/c(\Phi))\rmd y}\right)\; .
\end{equation}

\end{lemma}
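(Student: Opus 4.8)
The plan is to recognize $\QRC[][\Phi]{n}{\chunk{X}{1}{n}} = c(\Phi)\, T_0(F_n)$ with $T_0 = T_2\circ T_1$ as in \eqref{eq:definition-T0}, and to obtain \eqref{expansion:short-range} from the functional delta method (see \cite{vdv:2000}). Concretely, I would show that $T_1$ and $T_2$ are Hadamard differentiable, tangentially to the appropriate sets, at $\Phi_{\mu,\sigma}$ and at $U_0 := T_1(\Phi_{\mu,\sigma}) = U(\cdot,\Phi_{\mu,\sigma})$ respectively, so that by the chain rule $T_0$ is Hadamard differentiable at $\Phi_{\mu,\sigma}$. Since by hypothesis $a_n(F_n - \Phi_{\mu,\sigma})$ converges weakly in $(\SkhSpace{[0,\infty]},\supnorm{\cdot})$ to a tight limit supported in the relevant tangent set, the delta method in its stronger, in-probability form yields $a_n\bigl(T_0(F_n) - T_0(\Phi_{\mu,\sigma})\bigr) = DT_0(\Phi_{\mu,\sigma})[a_n(F_n-\Phi_{\mu,\sigma})] + o_P(1)$. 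Because $T_0(\Phi_{\mu,\sigma}) = \sigma/c(\Phi)$ (from $Q(\Phi_{\mu,\sigma})=\sigma$) and the derivative $DT_0(\Phi_{\mu,\sigma})$ is a continuous linear \emph{integral} functional, writing $F_n - \Phi_{\mu,\sigma} = n^{-1}\sum_i(\delta_{X_i}-\Phi_{\mu,\sigma})$ and distributing over the sum turns the linear term into $n^{-1}\sum_i \IF(X_i,Q,\Phi_{\mu,\sigma})$, which is \eqref{expansion:short-range}.

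The first technical step is the Hadamard differentiability of $T_1$ defined in \eqref{eq:definition-T1}. Perturbing $F \mapsto F + t h$ and using the symmetry of the kernel $\1_{\{|x-y|\le r\}}$, the candidate derivative is the linear map $h \mapsto \bigl\{ r \mapsto 2\int_{\rset}\bigl(F(v+r) - F((v-r)^-)\bigr)\,\rmd h(v)\bigr\}$; I would confirm this by controlling the second-order remainder uniformly in $r\in[0,\infty]$, which is feasible because $F=\Phi_{\mu,\sigma}$ is smooth and $r\mapsto \Phi_{\mu,\sigma}(v+r)-\Phi_{\mu,\sigma}(v-r)$ is uniformly Lipschitz. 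The second step is the Hadamard differentiability of the inverse/quantile functional $T_2$ of \eqref{eq:definition-T2} at $U_0$; here I would invoke the standard result on quantile maps in \cite{vdv:2000}, whose hypotheses require $U_0$ to be differentiable at $r_0 = T_0(\Phi_{\mu,\sigma}) = 1/c(\Phi)$ with strictly positive derivative. A direct computation gives $U_0(r) = 2\Phi(r/(\sqrt{2}\sigma)) - 1$ so that $U_0'(r_0) = \sqrt{2}\,\sigma^{-1}\phi(r_0/(\sqrt{2}\sigma)) > 0$, and the derivative is $DT_2(U_0)[g] = -g(r_0)/U_0'(r_0)$ for $g$ continuous at $r_0$.

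Combining the two via the chain rule yields, for the point-mass direction $h = \delta_x - \Phi_{\mu,\sigma}$, the influence function $\IF(x,T_0,\Phi_{\mu,\sigma}) = -g_x(r_0)/U_0'(r_0)$ where $g_x(r_0) = DT_1(\Phi_{\mu,\sigma})[\delta_x-\Phi_{\mu,\sigma}](r_0) = 2\bigl(\Phi_{\mu,\sigma}(x+r_0) - \Phi_{\mu,\sigma}(x-r_0) - 1/4\bigr)$, using $U_0(r_0)=1/4$. Multiplying by $c(\Phi)$ and simplifying the denominator through the identity $U_0'(r_0) = 2\int_{\rset}\phi(y)\phi(y+1/c(\Phi))\,\rmd y$ (for $\sigma=1$) recovers \eqref{IF:Q}; the affine-equivariance scaling \eqref{IF:Q_1} then follows from the change of variable $x \mapsto (x-\mu)/\sigma$ in the functional $T_0$, which reduces the general $(\mu,\sigma)$ case to the standard Gaussian.

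The main obstacle is the Hadamard differentiability of the degree-two correlation-integral map $T_1$ with respect to the \emph{uniform} norm over $r\in[0,\infty]$: unlike a linear statistical functional, $U(r,F)$ is quadratic in $F$, so the remainder must be shown to be $o(t)$ uniformly in $r$ along arbitrary converging perturbations $h_t\to h$ in $\SkhSpace{[0,\infty]}$, and one must verify that $DT_1(\Phi_{\mu,\sigma})$ maps the tangent set into the set of functions on which $T_2$ is differentiable, namely those continuous at $r_0$. Verifying these tangential compatibility conditions, rather than the influence-function algebra, is the delicate part.
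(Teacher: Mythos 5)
Your proposal follows essentially the same route as the paper's proof: the same decomposition $\QRC[][\Phi]{n}{\chunk{X}{1}{n}}=c(\Phi)\,T_0(F_n)$ with $T_0=T_2\circ T_1$, Hadamard differentiability of $T_1$ with the same candidate derivative, van der Vaart's Lemma~21.3 for the quantile map $T_2$, the chain rule and functional delta method, and the same influence-function algebra (your identity $U_0'(r_0)=2\int_{\rset}\phi(y)\phi(y+1/c(\Phi))\,\rmd y$ is exactly the paper's expression for $(T_1(F))'[T_0(F)]$, and distributing the linear derivative over $F_n-\Phi_{\mu,\sigma}=n^{-1}\sum_i(\delta_{X_i}-\Phi_{\mu,\sigma})$ is the paper's $A_n-B_n$ computation). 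Even the ``main obstacle'' you single out -- the quadratic remainder in the differentiability of the degree-two map $T_1$ -- is precisely the point the paper addresses by restricting to perturbation sequences of bounded variation, so the two arguments coincide in both structure and level of detail.
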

The proof of Lemma \ref{prop:asym_expansion} is given in Section \ref{sec:proofs}.
\begin{remark}
Note that $\IF(x,Q,\Phi)$ has the same expression as the influence function of the functional $Q$
evaluated at the c.d.f. $\Phi$ given by \cite[p. 1277]{Rousseeuw:Croux:1993}
and \cite[p. 675]{Genton:Ma:2000}.
As is well-known from \cite[p. 13]{huber:1981}, the
influence function $x \mapsto \IF(x,T,F)$ is defined
for a functional $T$ at a distribution $F$ at point $x$ as the limit
$$
\IF(x,T,F) \eqdef \lim_{\varepsilon\to 0+}\varepsilon^{-1}\{T(F+\varepsilon(\delta_x-F))-T(F)\}\; ,
$$
where $\delta_x$ is the Dirac distribution  at $x$.
Influence functions are a classical tool in robust statistics used to
understand the effect of a small contamination at the point $x$
on the estimator.
\end{remark} 
We focus here on the case where the process $(X_i)_{i\geq 1}$
satisfies the following assumption:

\begin{hypA}
\label{assum:short-range}
$(X_i)_{i\geq 1}$ is a stationary
mean-zero  Gaussian process with autocovariance sequence $\gamma(k)=\PE(X_{1}X_{k+1})$ satisfying:
$$
\sum_{k\geq 1}|\gamma(k)|<\infty\; .
$$
\end{hypA}

 To state the results,  we must first define the Hermite rank of the influence function $x \mapsto \IF(x,Q,\Phi)$.
 Let $\{ H_k \}$ denote the Hermite polynomials having leading coefficient equal to one.
 These are $H_0(x) = 1$, $H_1(x) = x$, $H_2(x) = x^2 - 1$, $\cdots$.
 Let $f$ be a function such that  $\int f^2(z) \rmd\Phi(z) < \infty$.
 The expansion of $f$ in Hermite polynomials is given by
 \begin{equation}
 \label{eq:edgeworth-expansion}
 f(z) \eqdef \sum_{q=\tau(f)}^\infty \frac{\alpha_q(f)}{q!} H_q(z) \eqsp,
 \end{equation}
 where $\alpha_q(f)= \int f(z) H_q(z) \rmd\Phi(z)$ and where the convergence is
 in $L^2(\rset,\Phi)$.   The index of the first nonzero coefficient in the expansion, denoted $\tau(f)$,
 is called the Hermite rank of the function $f$.  \cite[Theorem
 1]{breuer:major:1983} shows that if
 \begin{equation}
 \label{eq:convergence-autocorrelation}
 \sum_{h=-\infty}^\infty |\gamma(h)|^{\tau(f)} < \infty \eqsp,
 \end{equation}
 then the variance $\Var \left( n^{-1/2} \sum_{i=1}^n f(X_i) \right)$ converges as $n$ goes to infinity to a limiting value $\sigma^2(f)$
 which is given by
 \begin{multline}
 \sigma^2(f)=  \Var\left[f(X_1)\right] + 2 \sum_{h=1}^\infty \Cov\left[ f(X_{h+1}), f(X_1) \right]
 \\= \sum_{q=\tau}^\infty \frac{\alpha^2_q(f)}{q!} \left\{ \gamma^q(0) + 2 \sum_{h=1}^\infty \gamma^q(h)\right\} \eqsp.
 \end{multline}
 In addition, the renormalized partial sum is asymptotically Gaussian, 
 \begin{equation}
 \label{eq:convergence-loi-sum}
 n^{-1/2} \sum_{i=1}^n f(X_i) \cd \mathcal{N}\left(0,\sigma^2(f) \right) \eqsp.
 \end{equation}
 Concerning the empirical process, \cite{csorgo:mielniczuk:1996}
 proved that if
 \begin{equation}
 \label{eq:convergence-PE}
 \sum_{h=-\infty}^\infty |\gamma(h)| < \infty \eqsp,
 \end{equation}
 then $\sqrt{n}( F_{n}(\cdot) - \Phi_{0,\sigma}(\cdot))$
 converges in $\SkhSpace{[-\infty,\infty]}$ to a mean-zero Gaussian process $W(\cdot)$ with covariance
 \[
 \PE\left( W(r) W(r') \right)= \sum_{q=1}^\infty \frac{J_q(r)
   J_q(r')}{q!} \left\{ \gamma^q(0) + 2 \sum_{h=1}^\infty
   \gamma^q(h) \right\} \eqsp ,
 \]
where $J_q(r)=\int [\1_{\{\sigma x\leq r\}}-\Phi_{0,\sigma}(r)]H_q(x)\rmd\Phi(x)$ for all
$r$ in $[-\infty,\infty]$. These results are used to prove the
following theorem in Section \ref{sec:proofs}.

\begin{theo}\label{theo:short-range}
Under Assumption (A\ref{assum:short-range}), $\QRC[][\Phi]{n}{\chunk{X}{1}{n}}$ defined by
(\ref{eq:definition-QRC}), satisfies the following central limit theorem:
$$
\sqrt{n}(\QRC[][\Phi]{n}{\chunk{X}{1}{n}}-\sigma) \cd \mathcal{N}(0,\widetilde{\sigma}^2)\; ,
$$
where $\sigma \eqdef \sqrt{\gamma(0)}$ and the limiting variance $\widetilde{\sigma}^2$ is given by
\begin{equation}\label{def:sigma_tilde}
\widetilde{\sigma}^2=\gamma(0) \PE[\IF^2(X_1/\sigma,Q,\Phi)]
+2\gamma(0)\sum_{k\geq 1} \PE[\IF(X_1/\sigma,Q,\Phi) \IF(X_{k+1}/\sigma,Q,\Phi)]\; ,
\end{equation}
$\IF(\cdot,Q,\Phi)$ being defined in \eqref{IF:Q}.
\end{theo}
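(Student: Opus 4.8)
The plan is to combine the asymptotic linearization furnished by Lemma~\ref{prop:asym_expansion} with the central limit theorem of \cite{breuer:major:1983} for functionals of a stationary Gaussian sequence. First I would check the hypothesis of Lemma~\ref{prop:asym_expansion}. Since $(X_i)$ is mean-zero Gaussian with $\Var(X_1)=\gamma(0)=\sigma^2$, its marginal law is $\Phi_{0,\sigma}$, and Assumption (A\ref{assum:short-range}) is exactly condition \eqref{eq:convergence-PE}. Hence by \cite{csorgo:mielniczuk:1996} the normalized empirical process $\sqrt{n}(F_n-\Phi_{0,\sigma})$ converges weakly, and restriction to $[0,\infty]$ shows that Lemma~\ref{prop:asym_expansion} applies with $a_n=\sqrt{n}$, giving
\[
\sqrt{n}\left(\QRC[][\Phi]{n}{\chunk{X}{1}{n}}-\sigma\right)=\frac{1}{\sqrt{n}}\sum_{i=1}^n \IF(X_i,Q,\Phi_{0,\sigma})+o_P(1)\eqsp.
\]
By Slutsky's lemma it then suffices to prove a CLT for the leading sum and to identify its limiting variance with $\widetilde{\sigma}^2$.

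Next I would pass to the standardized sequence. Writing $Y_i=X_i/\sigma$ and $g=\IF(\cdot,Q,\Phi)$, relation \eqref{IF:Q_1} with $\mu=0$ gives $\IF(X_i,Q,\Phi_{0,\sigma})=\sigma\,g(Y_i)$, where $(Y_i)$ is a stationary standard Gaussian sequence with autocovariance $\rho(h)=\gamma(h)/\gamma(0)$. The influence function \eqref{IF:Q} is bounded, hence $g\in L^2(\rset,\Phi)$ and its Hermite expansion \eqref{eq:edgeworth-expansion} is valid. I would then locate its Hermite rank. A short computation using $\int_\rset \Phi(x+a)\,\phi(x)\,\rmd x=\Phi(a/\sqrt{2})$ together with the value $1/c(\Phi)=\sqrt{2}\,\Phi^{-1}(5/8)$ from \eqref{eq:c(Phi)} shows that the numerator of \eqref{IF:Q} has $\Phi$-mean $1/4-5/8+3/8=0$, so $\alpha_0(g)=0$ and the Hermite rank is at least $1$; moreover \eqref{IF:Q} is an even function of $x$ (replacing $x$ by $-x$ and using $\Phi(-t)=1-\Phi(t)$ leaves it unchanged), so $\alpha_1(g)=0$ and the Hermite rank is in fact at least $2$.

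Since $\tau(g)\geq 1$ and $\sum_h|\gamma(h)|<\infty$ forces $\sum_h|\rho(h)|^{\tau(g)}<\infty$, the Breuer–Major summability condition \eqref{eq:convergence-autocorrelation} holds, and \eqref{eq:convergence-loi-sum} yields
\[
\frac{1}{\sqrt{n}}\sum_{i=1}^n g(Y_i)\cd \mathcal{N}\bigl(0,\sigma^2(g)\bigr),\qquad \sigma^2(g)=\Var[g(Y_1)]+2\sum_{h\geq 1}\Cov[g(Y_{h+1}),g(Y_1)]\eqsp.
\]
Multiplying by $\sigma$, using $\gamma(0)=\sigma^2$ and $\PE[g(Y_1)]=0$, the limiting variance $\sigma^2\sigma^2(g)$ becomes $\gamma(0)\PE[\IF^2(X_1/\sigma,Q,\Phi)]+2\gamma(0)\sum_{k\geq 1}\PE[\IF(X_1/\sigma,Q,\Phi)\IF(X_{k+1}/\sigma,Q,\Phi)]$, which is exactly $\widetilde{\sigma}^2$ of \eqref{def:sigma_tilde}. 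Combining with the $o_P(1)$ remainder via Slutsky closes the argument.

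The substantive content has been front-loaded into Lemma~\ref{prop:asym_expansion} and the invariance principle of \cite{csorgo:mielniczuk:1996}, so once these are available the present argument is essentially bookkeeping. The step demanding the most care is the verification that $\alpha_0(g)=0$, which is what licenses the use of \cite{breuer:major:1983}; the accompanying evenness, giving rank at least $2$, plays no essential role here because $\sum_h|\gamma(h)|<\infty$ already makes \eqref{eq:convergence-autocorrelation} hold for every rank, but it is precisely the fact governing the $H=3/4$ threshold in the long-range analysis. I would also double-check the propagation of the scale factor $\sigma$ through \eqref{IF:Q_1}, as it is the source of the overall $\gamma(0)$ prefactor in \eqref{def:sigma_tilde}.
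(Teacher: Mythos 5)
Your proposal is correct and follows essentially the same route as the paper: the linearization of Lemma~\ref{prop:asym_expansion} with $a_n=\sqrt{n}$ (justified by \cite{csorgo:mielniczuk:1996} under (A\ref{assum:short-range})), followed by the Breuer--Major CLT once the influence function is known to be centered. The only cosmetic difference is that you verify $\alpha_0(g)=0$ and $\alpha_1(g)=0$ directly (via the convolution identity $\int\Phi(x+a)\phi(x)\,\rmd x=\Phi(a/\sqrt2)$ and evenness), whereas the paper invokes its Lemma~\ref{lem:hermite_Q} for the Hermite rank; your additional observation that rank $\geq 1$ already suffices here, given $\sum_h|\gamma(h)|<\infty$, is accurate.
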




%
%

It is interesting to compare, under Assumption
(A\ref{assum:short-range}), the asymptotic distribution of the
 proposed estimator $\QRC[][\Phi]{n}{\chunk{X}{1}{n}}$ with that of 
the square root of the sample variance
\begin{equation}\label{eq:empirical-variance}
\widehat{\sigma}^2_{n,X} = \frac{1}{n-1}\sum_{k=1}^n (X_k - \bar{X}_n)^2 
= \frac{1}{2n(n-1)} \sum_{1 \leq i \ne j \leq n}
(X_i-X_j)^2\; ,
\end{equation} 
where $ \bar{X}_n=n^{-1}\sum_{i=1}^n X_i$.

\begin{prop}\label{p:sigma}
Under Assumption (A\ref{assum:short-range}),
$$\sqrt{n} \left(\widehat{\sigma}_{n,X} -
  \sigma\right)\stackrel{d}{\longrightarrow}
\mathcal{N}(0,\widetilde{\sigma}^2_{cl})\; ,$$
where
\begin{equation}\label{e:lvar1}
\widetilde{\sigma}^2_{cl}=(2\gamma(0))^{-1}(\gamma(0)^2+2\sum_{k\geq 1}\gamma(k)^2) \; .
\end{equation}
The relative asymptotic efficiency $\widetilde{\sigma}^2_{cl}/\widetilde{\sigma}^2$
of the estimator $\QRC[][\Phi]{n}{\chunk{X}{1}{n}}$ compared to
$\widehat{\sigma}_{n,X}$ is larger than  82.27$\%$.
\end{prop}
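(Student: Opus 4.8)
The plan is to split the proposition into two independent pieces: first the central limit theorem for $\widehat\sigma_{n,X}$ together with the identification of $\widetilde\sigma^2_{cl}$, and then the numerical lower bound on the relative efficiency $\widetilde\sigma^2_{cl}/\widetilde\sigma^2$. For the first part I would apply the Breuer--Major machinery quoted in the excerpt. Observe that $\widehat\sigma^2_{n,X}$ is, up to the negligible centering by $\bar X_n$ and the $(n-1)$ versus $n$ normalization, the empirical mean $n^{-1}\sum_{i=1}^n X_i^2$. Writing $X_i^2/\gamma(0) = H_2(X_i/\sigma)+1$ in terms of the Hermite polynomial $H_2$, the function $f(z)=z^2-1$ has Hermite rank $\tau(f)=2$, so the summability condition \eqref{eq:convergence-autocorrelation} reads $\sum_h |\gamma(h)|^2<\infty$, which is implied by Assumption~(A\ref{assum:short-range}). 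Hence \eqref{eq:convergence-loi-sum} gives a CLT for $\sqrt n\,(n^{-1}\sum_i X_i^2-\gamma(0))$ with asymptotic variance computed from \eqref{eq:convergence-autocorrelation}; since only the $q=2$ coefficient is nonzero and $\alpha_2(f)=2$, this variance equals $2\sum_{q=2}\tfrac{4}{2}(\gamma^2(0)+2\sum_{h\ge1}\gamma^2(h))$-type expression, i.e. $2\gamma(0)^2+4\sum_{h\ge1}\gamma(h)^2$.

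I would then transfer this CLT from $\widehat\sigma^2_{n,X}$ to $\widehat\sigma_{n,X}=\sqrt{\widehat\sigma^2_{n,X}}$ by the delta method applied to $x\mapsto\sqrt x$ at $x=\gamma(0)$, whose derivative is $(2\sqrt{\gamma(0)})^{-1}$. Multiplying the variance of $\widehat\sigma^2_{n,X}$ by this derivative squared, $(4\gamma(0))^{-1}$, yields exactly
\[
\widetilde\sigma^2_{cl}=(2\gamma(0))^{-1}\Bigl(\gamma(0)^2+2\sum_{k\ge1}\gamma(k)^2\Bigr),
\]
matching \eqref{e:lvar1}. The only care needed here is to check that replacing $\bar X_n$ by the true mean $0$ and the $(n-1)^{-1}$ by $n^{-1}$ contributes only $o_P(n^{-1/2})$ terms: $\bar X_n=O_P(n^{-1/2})$ under short-range dependence, so $\bar X_n^2=O_P(n^{-1})$ is negligible after multiplication by $\sqrt n$, and the normalization discrepancy is of the same order.

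The genuinely delicate part is the efficiency bound. Here I would first rewrite both variances using the Hermite expansion of the influence function $\IF(\cdot,Q,\Phi)$ from \eqref{IF:Q}, giving $\widetilde\sigma^2=\gamma(0)\sum_{q\ge\tau}\tfrac{\alpha_q^2}{q!}(\gamma^q(0)\gamma(0)^{-q}+2\sum_{h\ge1}\gamma^q(h)\gamma(0)^{-q})$ in the normalized variables $X_i/\sigma$, to be compared with the $q=2$-only expression for $\widetilde\sigma^2_{cl}$. Since the ratio $\widetilde\sigma^2_{cl}/\widetilde\sigma^2$ depends on the whole autocovariance sequence, the claim is that its infimum over all admissible sequences equals the value attained in the i.i.d.\ case $\gamma(h)=\gamma(0)\mathbbm 1_{\{h=0\}}$, where it reduces to the classical relative efficiency of the Rousseeuw--Croux scale estimator at the Gaussian, known to be about $82.27\%$. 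I expect the main obstacle to be justifying that this i.i.d.\ configuration is indeed the worst case: one must argue that the extra Hermite components ($q\ge3$) present in $\widetilde\sigma^2$ but absent in $\widetilde\sigma^2_{cl}$ can only improve the ratio, which requires a term-by-term comparison of the series $\sum_q\tfrac{\alpha_q^2}{q!}\sum_h\rho^q(h)$ against its $q=2$ truncation, controlling the sign of the cross-dependence contributions $\sum_{h\ge1}\rho^q(h)$ where $\rho(h)=\gamma(h)/\gamma(0)\in[-1,1]$. I would isolate the pointwise Gaussian computation $\PE[\IF^2(X_1/\sigma,Q,\Phi)]$, which fixes the numerical constant, and then handle the dependence terms via the positivity structure of the Hermite coefficients, reducing the bound to the single numerical evaluation that produces $82.27\%$.
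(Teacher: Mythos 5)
Your first part is correct and follows essentially the paper's own route: the paper phrases it through the Hoeffding decomposition of the $U$-statistic with kernel $(x-y)^2/2-1$ (see \eqref{eq:hoeff_dec_sigma_n}), but that is equivalent to your direct reduction of $\widehat{\sigma}^2_{n,X}$ to $n^{-1}\sum_i X_i^2$ plus $O_P(n^{-1})$ terms, followed by Breuer--Major applied to $H_2$ and the delta method with $x\mapsto\sqrt{x}$; both yield \eqref{e:lvar1}.

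The efficiency bound, however, contains a genuine gap. You reduce the claim to ``the i.i.d.\ configuration is the worst case,'' explicitly flag this as the main obstacle, and then only gesture at a resolution (``controlling the sign of the cross-dependence contributions,'' ``positivity structure of the Hermite coefficients''). That mechanism is never carried out, and it is also not the right tool: no control of signs and no identification of a minimizing sequence is needed. Writing $\rho(h)=\gamma(h)/\gamma(0)\in[-1,1]$, the elementary inequality $\rho^q(h)\le|\rho(h)|^q\le\rho^2(h)$ holds for \emph{every} $q\ge 2$ regardless of sign. Since the Hermite expansion of $\IF(\cdot,Q,\Phi)$ starts at $q=2$ --- it has Hermite rank $2$, which is the paper's Lemma~\ref{lem:hermite_Q}, i.e.\ $\PE[\IF(X,Q,\Phi)]=\PE[X\,\IF(X,Q,\Phi)]=0$; you use this implicitly (your series starts at the ``$q=2$ truncation'') but never verify it, and it is essential because for a $q=1$ term the inequality $\rho(h)\le\rho^2(h)$ fails --- one gets term by term
\[
\widetilde{\sigma}^2=\gamma(0)\sum_{q\ge2}\frac{\alpha_q^2}{q!}\Bigl(1+2\sum_{h\ge1}\rho^q(h)\Bigr)
\le\gamma(0)\,\PE\bigl[\IF^2(X_1/\sigma,Q,\Phi)\bigr]\Bigl(1+2\sum_{h\ge1}\rho^2(h)\Bigr)
=\frac{\PE[\IF^2(X_1/\sigma,Q,\Phi)]}{\gamma(0)}\Bigl(\gamma(0)^2+2\sum_{k\ge1}\gamma(k)^2\Bigr).
\]
This is exactly the covariance inequality of \cite[Lemma~1]{Arcones:1994}, which is what the paper invokes at this point. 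The key structural observation is then that the dependence factor $\gamma(0)^2+2\sum_{k\ge1}\gamma(k)^2$ in this upper bound is the \emph{same} factor appearing in $\widetilde{\sigma}^2_{cl}$ of \eqref{e:lvar1}, so it cancels in the ratio, giving the uniform bound $\widetilde{\sigma}^2_{cl}/\widetilde{\sigma}^2\ge 1/\bigl(2\,\PE[\IF^2(X_1/\sigma,Q,\Phi)]\bigr)\approx 0.5/0.6077\approx 82.27\%$ for every autocovariance sequence satisfying (A\ref{assum:short-range}), using the numerical value of $\PE[\IF^2]$ from \cite{Rousseeuw:Croux:1993}. Until the inequality above (or Arcones' lemma) and the Hermite-rank-$2$ property are supplied, your proposal does not establish the stated $82.27\%$ bound.
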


The index ``cl'' stands for ``classical''. 
The proof of Proposition \ref{p:sigma} is given in Section \ref{sec:proofs}.





\subsubsection{Properties of the autocovariance estimator}
In this section, we establish the limiting behavior of the autocovariance estimator given, for $0\leq h<n$, by
\begin{equation}\label{def:gamma_Q}
\widehat{\gamma}_Q(h,\chunk{X}{1}{n},\Phi)
=\frac{1}{4}\left[\QRC[2][\Phi]{n-h}{\chunk{X}{1}{n-h}+\chunk{X}{h+1}{n}}
- \QRC[2][\Phi]{n-h}{\chunk{X}{1}{n-h}-\chunk{X}{h+1}{n}}\right]\; .
\end{equation}

\begin{theo}\label{theo:gamma_short}
Assume that (A\ref{assum:short-range}) holds and let $h$ be a non negative
integer.
Then, the autocovariance estimator $\widehat{\gamma}_Q(h,\chunk{X}{1}{n},\Phi)$ satisfies the following Central Limit Theorem:
$$
\sqrt{n}\left(\widehat{\gamma}_Q(h,\chunk{X}{1}{n},\Phi)-\gamma(h)\right)
\stackrel{d}{\longrightarrow}\mathcal{N}(0,{\check{\sigma}}^2_h)\; ,
$$
where
\begin{equation}\label{e:lvar2}
\check{\sigma}^2(h)=\PE[\psi^2(X_1,X_{1+h})]+2\sum_{k\geq 1} \PE[\psi(X_1,X_{1+h}) \psi(X_{k+1},X_{k+1+h})]\; ,
\end{equation}
%
%
%
and the function $\psi$ is defined by
\begin{multline}\label{def:psi}
\psi : (x,y)\mapsto
\\ \left\{(\gamma(0)+\gamma(h)) \; \IF\left(\frac{x+y}{\sqrt{2(\gamma(0)+\gamma(h))}},Q,\Phi\right)
- (\gamma(0)-\gamma(h)) \; \IF\left(\frac{x-y}{\sqrt{2(\gamma(0)-\gamma(h))}},Q,\Phi\right)\right\}\; .
\end{multline}
where $\IF$ is defined in \eqref{IF:Q}.
\end{theo}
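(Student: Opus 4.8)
The plan is to linearize each of the two squared scale estimators appearing in \eqref{def:gamma_Q}, to recognize the resulting average as the empirical mean of the function $\psi$ defined in \eqref{def:psi}, and then to invoke a central limit theorem for nonlinear functionals of a Gaussian sequence.

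First I would introduce the sum and difference sequences $S_i = X_i + X_{i+h}$ and $D_i = X_i - X_{i+h}$, which are the coordinates of the vectors $\chunk{X}{1}{n-h}\pm\chunk{X}{h+1}{n}$. Both are stationary mean-zero Gaussian processes, with marginal variances $\sigma_+^2 = 2(\gamma(0)+\gamma(h))$ and $\sigma_-^2 = 2(\gamma(0)-\gamma(h))$. Their autocovariances are finite linear combinations of $\gamma(\cdot)$ at shifted lags, so each inherits Assumption (A\ref{assum:short-range}); in particular, by \cite{csorgo:mielniczuk:1996}, the empirical process of each, normalized by $\sqrt{n-h}$, converges weakly in $\SkhSpace{[0,\infty]}$. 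This is exactly the hypothesis of Lemma~\ref{prop:asym_expansion}, which I would apply separately to $\QRC[][\Phi]{n-h}{\chunk{X}{1}{n-h}+\chunk{X}{h+1}{n}}$ and to $\QRC[][\Phi]{n-h}{\chunk{X}{1}{n-h}-\chunk{X}{h+1}{n}}$, obtaining asymptotic expansions around $\sigma_+$ and $\sigma_-$ in terms of the influence functions $\IF(\cdot,Q,\Phi_{0,\sigma_\pm})$.

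I would then pass from each scale to its square. Since the scale estimator of the sum process converges in probability to $\sigma_+$, a first-order expansion of $x\mapsto x^2$ at $\sigma_+$ yields $\sqrt{n}(\QRC[2][\Phi]{n-h}{\chunk{X}{1}{n-h}+\chunk{X}{h+1}{n}}-\sigma_+^2)=2\sigma_+\sqrt{n}(\QRC[][\Phi]{n-h}{\chunk{X}{1}{n-h}+\chunk{X}{h+1}{n}}-\sigma_+)+o_P(1)$, and likewise for the difference process. Combining the two expansions, using the scaling relation \eqref{IF:Q_1} to write $\IF(\cdot,Q,\Phi_{0,\sigma_\pm})=\sigma_\pm\IF(\cdot/\sigma_\pm,Q,\Phi)$, and noting that the centering is correct because $\tfrac14(\sigma_+^2-\sigma_-^2)=\gamma(h)$, the prefactors $\tfrac14\cdot 2\sigma_\pm\cdot\sigma_\pm=\tfrac12\sigma_\pm^2=\gamma(0)\pm\gamma(h)$ are exactly those in \eqref{def:psi}, so I expect the linearization
\[
\sqrt{n}\left(\widehat{\gamma}_Q(h,\chunk{X}{1}{n},\Phi)-\gamma(h)\right)=\frac{\sqrt{n}}{n-h}\sum_{i=1}^{n-h}\psi(X_i,X_{i+h})+o_P(1)\eqsp.
\]

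It remains to establish a central limit theorem for the stationary sequence $\psi(X_i,X_{i+h})$. Since $(X_i,X_{i+h})$ is a shift of the underlying Gaussian process, this is a nonlinear functional of a bivariate Gaussian sequence. Because $\IF(\cdot,Q,\Phi)$ is even and integrates to zero against $\Phi$ (being an influence function), while $S_i/\sigma_+$ and $D_i/\sigma_-$ are standard normal, the sequence $\psi(X_i,X_{i+h})$ is centered with Hermite rank at least two; consequently the summability condition \eqref{eq:convergence-autocorrelation} holds under (A\ref{assum:short-range}), since $\sum_h|\gamma(h)|^2<\infty$ follows from $\sum_h|\gamma(h)|<\infty$. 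A vector-valued Breuer--Major central limit theorem (the bivariate extension of \cite{breuer:major:1983} underlying \eqref{eq:convergence-loi-sum}) then gives asymptotic normality of $(n-h)^{-1/2}\sum_{i=1}^{n-h}\psi(X_i,X_{i+h})$, with limiting variance equal to the long-run variance $\check{\sigma}^2(h)$ of \eqref{e:lvar2}; since $\sqrt{n}/\sqrt{n-h}\to 1$ for fixed $h$, normalizing by $\sqrt{n}$ rather than $\sqrt{n-h}$ is harmless. The main obstacle is this last step: controlling the remainder in the linearization uniformly so that it is genuinely $o_P(1)$, and treating the \emph{bivariate} Gaussian functional $\psi$ rather than the scalar functionals of \eqref{eq:convergence-loi-sum}. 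Reducing everything to the single empirical sum $\sum\psi(X_i,X_{i+h})$ is what lets the (dependent) expansions of the two scale estimators be combined without separately tracking their cross-covariance, and the fixed finite lag $h$ keeps the covariance structure of $(X_i,X_{i+h})_i$ tractable.
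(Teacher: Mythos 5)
Your proposal is correct and follows essentially the same route as the paper's proof: the same sum/difference decomposition, the same application of Lemma \ref{prop:asym_expansion} via \cite{csorgo:mielniczuk:1996}, the Delta method with $b(x)=x^2$, the identification of $\psi$ together with the centering $\{Q^2(\Phi_{\sigma,+})-Q^2(\Phi_{\sigma,-})\}/4=\gamma(h)$, and a final CLT for $(n-h)^{-1/2}\sum_{i=1}^{n-h}\psi(X_i,X_{i+h})$. The ``vector-valued Breuer--Major theorem'' you invoke is precisely what the paper uses, namely Theorem 4 of \cite{Arcones:1994} with $\tau=2$, and your evenness/centering argument for the Hermite rank is the content of the paper's Lemma \ref{lem:herm_rank_psi}.
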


The proof of Theorem~\ref{theo:gamma_short} is given in
Section \ref{sec:proofs}.

\begin{remark}
Note that $\psi$ has the same expression as the influence
function of $\gamma_Q(\cdot)$ given in
\cite[p. 675]{Genton:Ma:2000}.
\end{remark}

\begin{remark}\label{rem:autocov_short}
Let us now compare under Assumption (A\ref{assum:short-range}) the
asymptotic distribution of the proposed estimator with the classical
autocovariance estimator defined by
\begin{equation}\label{e:ccov}
\widehat{\gamma}(h)=n^{-1}\sum_{i=1}^{n-h}(X_i-\bar{X}_n)(X_{i+h}-\bar{X}_n)
,\; 0\leq h\leq n-1\;. 
\end{equation}
Under (A\ref{assum:short-range}), applying \cite[Theorem
4]{Arcones:1994} to $f:(x,y)\mapsto xy$ and $X_j=(X_j,X_{j+h})$, where
$h$ is a non negative integer, leads to the following result. 

\begin{prop}
For a given
 non negative integer $h$, as $n\to\infty$,
$$\sqrt{n}(\widehat{\gamma}(h)-\gamma(h))
\stackrel{d}{\longrightarrow}\mathcal{N}(0,\check{\sigma}^2_{cl}(h))\; ,$$ 
where
\begin{equation}\label{e:lvar3}
\check{\sigma}^2_{cl}(h)=\gamma^2(0)+\gamma(h)^2+2\sum_{k\geq 1}
\gamma^2(k)+2\sum_{k\geq 1}\gamma(k+h)\gamma(k-h)\; .
\end{equation}
\end{prop}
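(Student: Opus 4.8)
The plan is to recognise $\widehat{\gamma}(h)$ as a normalised sum of a fixed nonlinear function of a stationary bivariate Gaussian sequence and then to invoke the multivariate central limit theorem of \cite[Theorem 4]{Arcones:1994}. First I would dispose of the sample mean. Expanding \eqref{e:ccov},
\[
\widehat{\gamma}(h)=n^{-1}\sum_{i=1}^{n-h}X_iX_{i+h}-\bar{X}_n\,n^{-1}\sum_{i=1}^{n-h}(X_i+X_{i+h})+\frac{n-h}{n}\,\bar{X}_n^2\eqsp.
\]
Since the process is mean-zero and short-range dependent, $\bar{X}_n=O_P(n^{-1/2})$ and each partial sum above is itself $O_P(n^{-1/2})$, so the last two terms are $O_P(n^{-1})$ and vanish after multiplication by $\sqrt{n}$. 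Hence it suffices to treat $n^{-1}\sum_{i=1}^{n-h}X_iX_{i+h}$. Writing $m=n-h$, using $m/n\to1$, $\sqrt{m}/\sqrt{n}\to1$ and $\sqrt{n}\,\gamma(h)(m/n-1)=-h\gamma(h)/\sqrt{n}\to0$, the problem reduces to the asymptotic normality of $m^{-1/2}\sum_{i=1}^m\{X_iX_{i+h}-\gamma(h)\}$.

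Next I would set $\mathbf{Y}_j=(X_j,X_{j+h})$, a stationary mean-zero Gaussian sequence in $\rset^2$, and $f(x,y)=xy$, so that $\PE[f(\mathbf{Y}_1)]=\gamma(h)$. The centred function $f-\gamma(h)$ is a purely quadratic form with no constant or linear component (its projections onto $1$, $X_1$, $X_{1+h}$ vanish by the vanishing of odd Gaussian moments), hence has multivariate Hermite rank $\tau=2$. The cross-covariance entries between $\mathbf{Y}_1$ and $\mathbf{Y}_{k+1}$ are $\gamma(k)$ and $\gamma(k\pm h)$, all dominated by $|\gamma(\cdot)|$, so Assumption (A\ref{assum:short-range}) together with the implication $\sum_k|\gamma(k)|<\infty\Rightarrow\sum_k|\gamma(k)|^2<\infty$ secures the summability condition required by \cite[Theorem 4]{Arcones:1994} at Hermite rank $2$, the analogue of \eqref{eq:convergence-autocorrelation}. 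That theorem then delivers
\[
m^{-1/2}\sum_{i=1}^m\{f(\mathbf{Y}_i)-\gamma(h)\}\cd\mathcal{N}\bigl(0,\check{\sigma}^2_{cl}(h)\bigr)\eqsp,
\]
with limiting variance $\check{\sigma}^2_{cl}(h)=\Var[f(\mathbf{Y}_1)]+2\sum_{k\geq1}\Cov[f(\mathbf{Y}_1),f(\mathbf{Y}_{k+1})]$.

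Finally I would evaluate this variance explicitly through the Isserlis--Wick identity for centred jointly Gaussian variables, $\Cov(X_aX_b,X_cX_d)=\Cov(X_a,X_c)\Cov(X_b,X_d)+\Cov(X_a,X_d)\Cov(X_b,X_c)$. This yields $\Var[X_1X_{1+h}]=\gamma^2(0)+\gamma^2(h)$ and, for $k\geq1$,
\[
\Cov[X_1X_{1+h},X_{k+1}X_{k+1+h}]=\gamma^2(k)+\gamma(k+h)\gamma(k-h)\eqsp,
\]
which, once doubled and summed over $k\geq1$, reproduces exactly the four terms of \eqref{e:lvar3}.

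The negligibility of the mean correction and the Wick algebra are routine; the substantive step is verifying the hypotheses of \cite[Theorem 4]{Arcones:1994}, namely fixing the correct multivariate Hermite rank of $(x,y)\mapsto xy$ and confirming that the resulting summability condition is implied by (A\ref{assum:short-range}). I expect this verification, rather than the closed-form variance computation, to be the main point requiring care.
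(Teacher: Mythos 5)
Your proposal is correct and follows essentially the same route as the paper: the paper's entire proof consists of invoking \cite[Theorem 4]{Arcones:1994} with $f:(x,y)\mapsto xy$ and the bivariate stationary sequence $(X_j,X_{j+h})$, which is exactly your core step. The details you add (negligibility of the $\bar{X}_n$ corrections, the Hermite-rank-2 verification, the summability of squared covariances, and the Isserlis--Wick evaluation of $\check{\sigma}^2_{cl}(h)$) are all sound and simply make explicit what the paper leaves implicit.
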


Let us now compare $\check{\sigma}^2(h)$ in (\ref{e:lvar2}) with
$\check{\sigma}^2_{cl}(h)$ in (\ref{e:lvar3}).
Since  the theoretical lower bound for the asymptotic
relative efficiency (ARE) defined by
$\textrm{ARE}(h)=\check{\sigma}^2_{cl}(h)/\check{\sigma}^2(h)$
is difficult to obtain, the estimation of ARE was calculated in the
case where $(X_i)_{i\geq 1}$ is an
AR(1) process: $X_i=\phi_1 X_{i-1}+\varepsilon_i$, where
$(\varepsilon_i)_{i\geq 1}$ is a Gaussian white noise, for 
$\phi_1=0.1$, 0.5 and 0.9. These results are given
in Figure \ref{fig:ARE_cov} which displays ARE for $h=1,\dots,60$. 
From this figure, we can see that ARE
ranges from 0.82 to 0.90 which indicates empirically that the robust procedure
has almost no loss of efficiency.

\begin{figure}[!ht]
\begin{center}
\begin{tabular}{cc}
\includegraphics*[width=7cm]{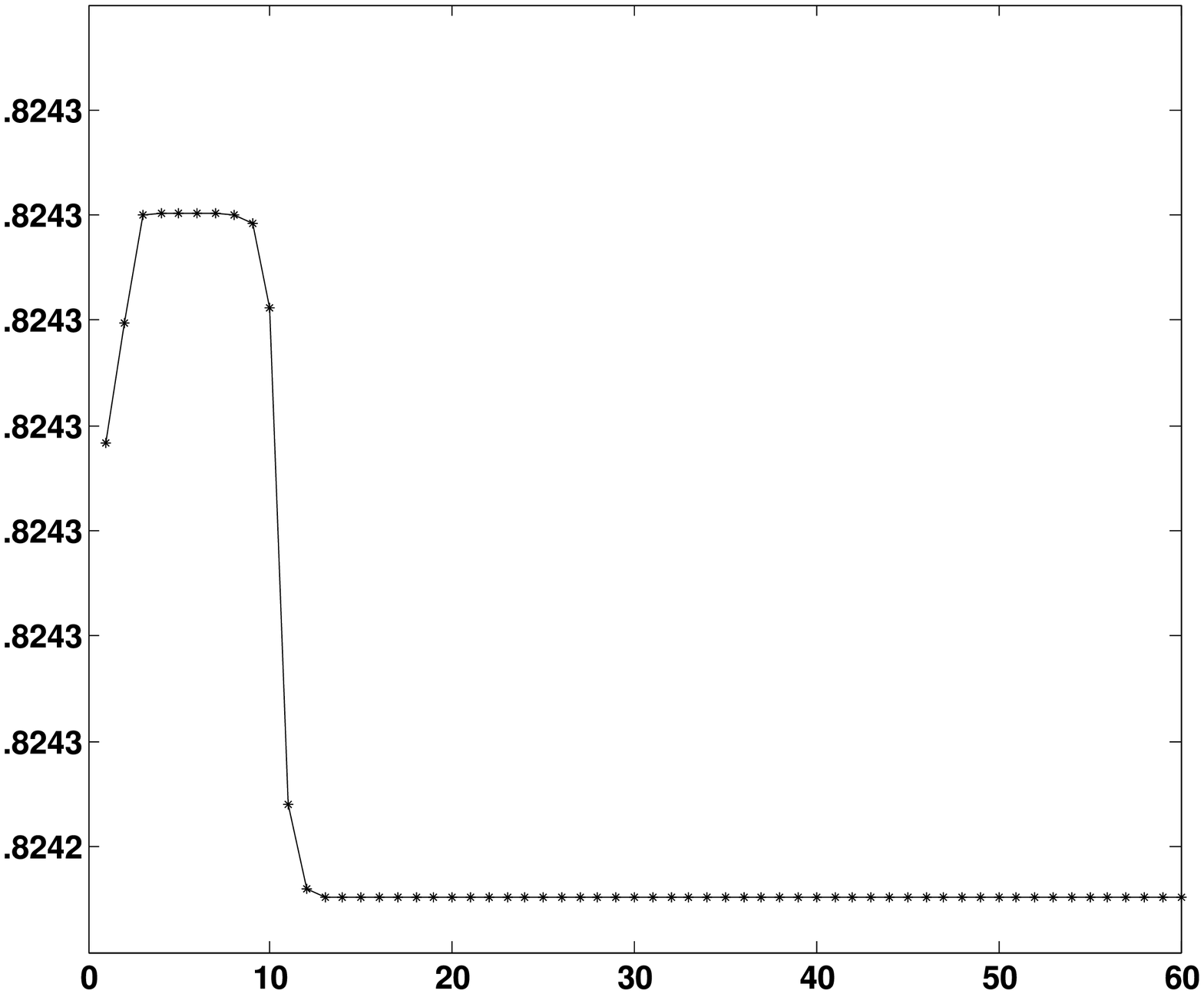}
&\includegraphics*[width=7cm]{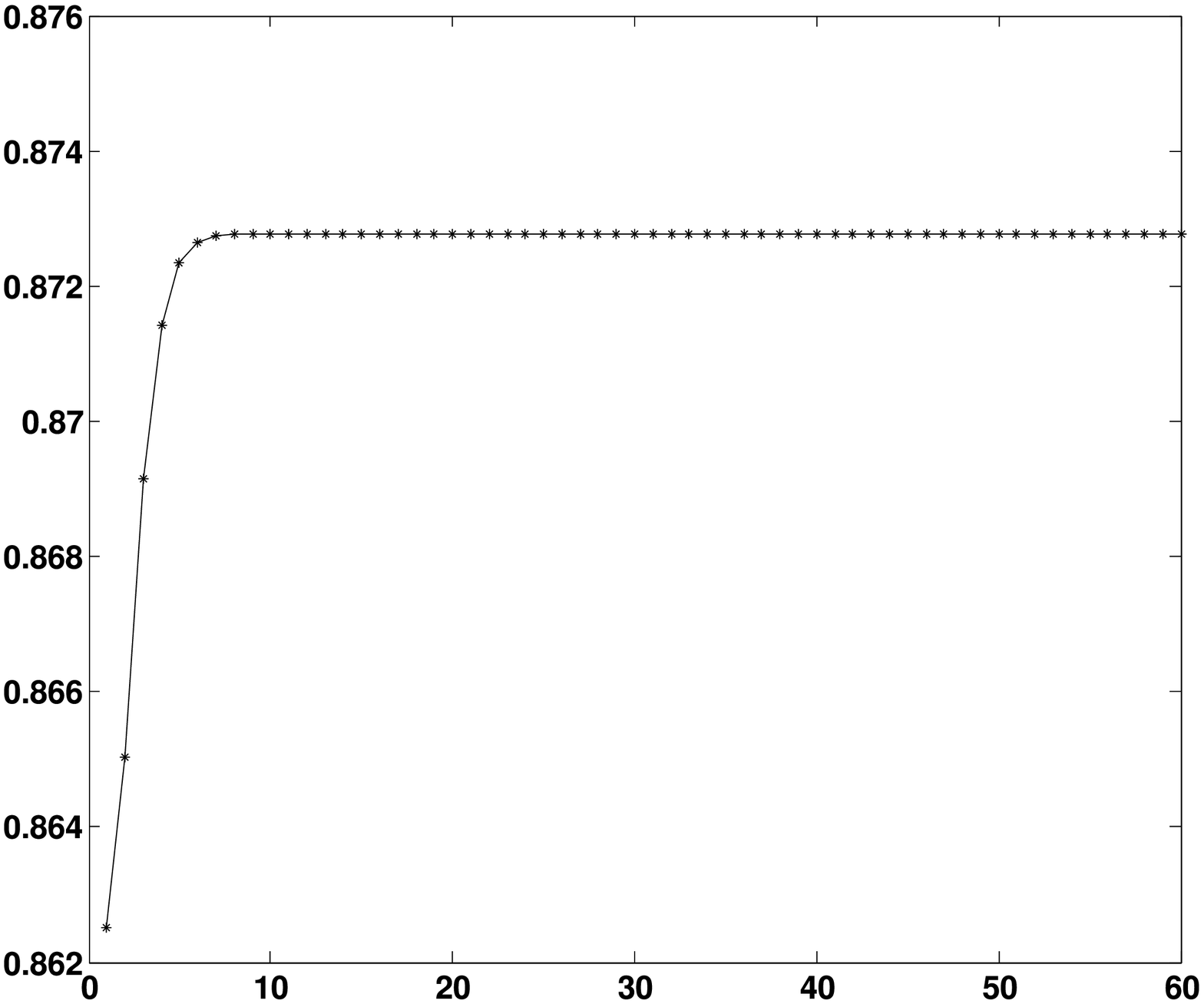}
\end{tabular}
\includegraphics*[width=7cm]{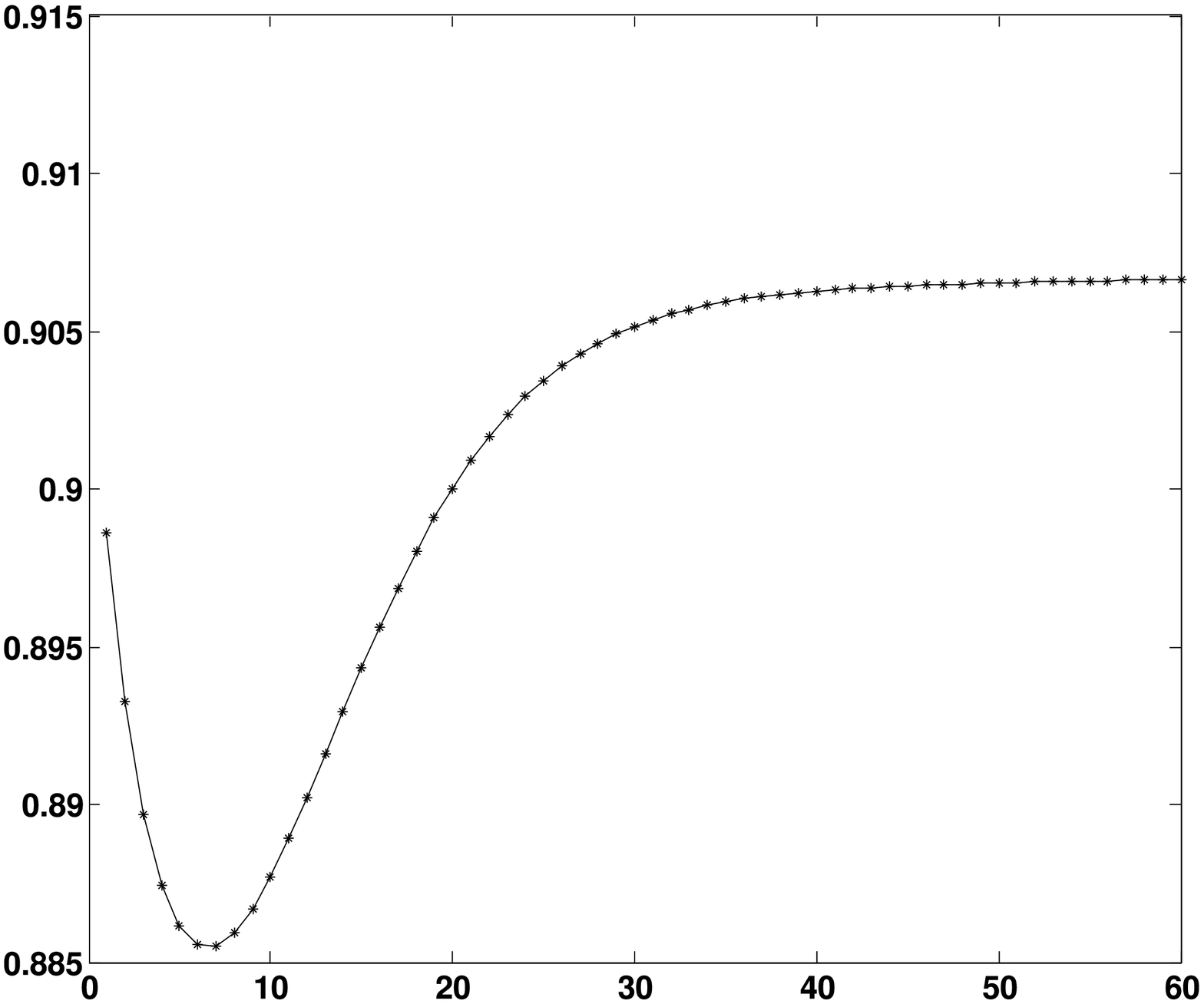}
\caption{\footnotesize{ARE for an AR(1) process for different values
    of $\phi_1$: 0.1, 0.5 and 0.9 from left to right.}}
\end{center}
\label{fig:ARE_cov}
\end{figure}

\end{remark}



\subsection{Long-range dependence setting}
In this section, we study the behavior of the robust scale and
autocovariance estimators $\QRC[][\Phi]{n}{\chunk{X}{1}{n}}$ and
$\widehat{\gamma}_Q(h,\chunk{X}{1}{n},\Phi)$
in (\ref{eq:definition-Qn}) and (\ref{def:gamma_Q}) respectively.
in the case where the
process is long-range dependent. 
Long-range dependent processes play a key role in many domains, and it is therefore worthwhile
to understand the behavior of such estimators in this context. 
\begin{hypA}
\label{assum:long-range} $(X_i)_{i \geq 1}$ is a stationary
mean-zero  Gaussian process with autocovariance $\gamma(k)=\PE(X_{1}X_{k+1})$ satisfying:
$$
\gamma(k)=k^{-D} L(k),\ 0<D<1\; ,
$$
where $L$ is slowly varying at infinity and is positive for large $k$.
\end{hypA}
A classical model for long memory process is the so-called ARFIMA($p,d,q$), which is a natural generalization of standard ARIMA($p,d,q$) models. By allowing $d$ to assume any value in $(-1/2,1/2)$, a fractional ARFIMA model is defined by $\Phi(B) (1-B)^d X_i = \Theta(B) Z_i$.
Here $(Z_i)_{i \in \zset}$ is a white Gaussian noise, $B$ denotes the
backshift operator, $\Phi(B)$ defines the AR-part, $\Theta(B)$ defines
the MA part of the process, and $(1-B)^d= \sum_{k=0}^\infty
\binom{d}{k} (-B)^k$ is the fractional difference operator. 
For $d\neq 0$, one has 
\begin{equation}\label{e:Dd}
D=1-2d
\end{equation}
(see (6.6) of \cite{taqqu:1975}).
For  $d=0$, we obtain the usual ARMA model. Long memory occurs for $d
> 0$. As $k \to \infty$, the autocovariance of an ARFIMA($p,d,q$) decreases as
$\gamma(k) = C k^{2d-1}$. Such processes satisfy
(A\ref{assum:long-range}) with $D=1-2d$, see \cite[Chapter 1]{doukhan:oppenheim:taqqu:2003}
for example for more details.

Perhaps surprisingly, the proof of the asymptotic properties
of $\QRC[][\Phi]{n}{\chunk{X}{1}{n}}$ in the long-range dependence framework does not follow the same
steps as in the short-range dependence case.

To understand why, assume that Assumption (A\ref{assum:long-range}) holds with $\gamma(0)=1$. \cite[Theorem 1.1]{Dehling:Taqqu:1989}  shows that
the difference between the empirical distribution function $F_n$ and $\Phi$, the c.d.f. of the standard Gaussian distribution $\mathcal{N}(0,1)$ renormalized by $n d_n^{-1}$, \ie\ $n d_n^{-1}(F_n- \Phi)$, converges in distribution
to a Gaussian process in the space of cadlag functions equipped with the topology of uniform convergence.
The sequence $d_n$ depends on the exponent $D$ governing the decay of the autocorrelation function to zero and also on the
slowly varying function $L$ appearing in (A\ref{assum:long-range}): more precisely,
\begin{equation}\label{e:dn}
d_n \eqdef \alpha(D)^{1/2}n^{1-D/2}L^{1/2}(n)
\end{equation}
with $\alpha(D)=2(1-D)^{-1} (2-D)^{-1}$ for $D$ in $(0,1)$
defined in (A\ref{assum:long-range}).
Therefore, Lemma~\ref{prop:asym_expansion} shows that the asymptotic
expansion of $a_n(\QRC[][\Phi]{n}{\chunk{X}{1}{n}}-1)$ in
\eqref{expansion:short-range} remains valid with $a_n=nd_n^{-1}$, and that
it remains  to study the convergence of $d_n^{-1}\sum_{i=1}^n \IF(X_i,Q,\Phi)$.
This type of non-linear functional of stationary long-memory Gaussian
sequences have been studied
in \cite{taqqu:1975} and \cite{breuer:major:1983}.
The limiting behavior of these functionals depend both on $D$ and on the Hermite rank of the function $\IF(\cdot,Q,\Phi)$.
According to \cite{breuer:major:1983} and \cite{taqqu:1975}, under Assumption
(A\ref{assum:long-range}), two
markedly different behavior may occur, depending on the value of $D$. 
If $D \in (1/2,1)$, then, by \cite{breuer:major:1983},
$n^{-1/2} \sum_{i=1}^n \IF(X_i,Q,\Phi)$ converges to a zero-mean Gaussian random variable with finite variance.
If $D \in (0,1/2)$, then  $n^{D-1} L^{-1}(n) \sum_{i=1}^n
\IF(X_i,Q,\Phi)$ converges to a non degenerate (non Gaussian) random
variable, see \cite{taqqu:1975}.
From these two results and (\ref{e:dn}), it follows that 
$$d_n^{-1}\sum_{i=1}^n
\IF(X_i,Q,\Phi)=o_P(1)\; ,$$ for $D\neq 1/2$. Therefore, the leading term in
the expansion of $\QRC[][\Phi]{n}{\chunk{X}{1}{n}}-1$ in the short-memory setting is no longer the
leading term in the long-memory case.

This explains why the proof, in the long-memory case, does not follow
the same line of reasoning as that in the short-range dependence case.
To derive the asymptotic properties of $\QRC[][\Phi]{n}{\chunk{X}{1}{n}}$ and $\widehat{\gamma}_Q(\cdot,\chunk{X}{1}{n},\Phi)$ for
long-memory processes,  it will be necessary to carry out a careful study of the $U$-process 
\begin{equation}
\label{eq:definition-U-process}
U_n(r)= \frac{1}{n(n-1)}\sum_{1\leq i\neq j\leq n}\1_{\{|X_i-X_j|\leq r\}} = T_1(F_n)[r] - \frac{1}{n} \eqsp,
\end{equation}
based on the class of kernels $ \{\1_{\{|x-y|\leq r\}} , x,y \in \rset, r\geq 0\}$.
Its asymptotic properties 
can be derived from Propositions
\ref{theo:U_n_D>1/2} and \ref{theo:D<1/2} in Section \ref{s:U-process} which are proved
in the companion paper \cite{boistard:levy:2009}.

\subsubsection{Properties of the scale estimator}

The next theorem gives the asymptotic behavior of the robust scale
estimator $\QRC[][\Phi]{n}{\chunk{X}{1}{n}}$ under Assumption (A\ref{assum:long-range}).

\begin{theo}\label{theo:Q_n_long-range}
Under Assumption (A\ref{assum:long-range}), $\QRC[][\Phi]{n}{\chunk{X}{1}{n}}$ satisfies the
following limit theorems as
$n$ tends to infinity:

\begin{enumerate}[(i)]
\item If $D>1/2$,
$$
\sqrt{n}(\QRC[][\Phi]{n}{\chunk{X}{1}{n}}-\sigma)
\stackrel{d}{\longrightarrow}\mathcal{N}(0,\widetilde{\sigma}^2)\; ,
$$
where $\sigma=\sqrt{\gamma(0)}$,
$$
\widetilde{\sigma}^2=\gamma(0)\PE[\IF(X_1/\sigma,Q,\Phi)^2]+2\gamma(0)
\sum_{k\geq 1} \PE[\IF(X_1/\sigma,Q,\Phi) \IF(X_{k+1}/\sigma,Q,\Phi)]\; ,
$$
and $\IF(\cdot,Q,\Phi)$ is defined in (\ref{IF:Q}).

\item If $D<1/2$,
$$
\beta(D)\frac{n^D}{L(n)}(\QRC[][\Phi]{n}{\chunk{X}{1}{n}}-\sigma)\stackrel{d}{\longrightarrow}
\frac{\sigma}{2}(Z_{2,D}(1)-Z_{1,D}^2(1))\; ,
$$
where $\beta(D)=\emph{B}((1-D)/2,D)$, $\emph{B}$ denoting the Beta
function
and the processes $Z_{1,D}(\cdot)$ and $Z_{2,D}(\cdot)$ are defined in (\ref{eq:fBm}) and (\ref{eq:rosenblatt}).
\end{enumerate}
\end{theo}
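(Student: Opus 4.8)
The plan is to work directly with the $U$-process $U_n$ of \eqref{eq:definition-U-process} rather than with the empirical process $F_n$. As already explained before the statement, with the empirical-process normalisation $a_n=nd_n^{-1}$ of \eqref{e:dn} the influence-function term in \eqref{expansion:short-range} is $o_P(1)$, while at the rate $\sqrt n$ the process $F_n-\Phi_{0,\sigma}$ does not converge; so Lemma~\ref{prop:asym_expansion} cannot deliver the limit. The point is that the empirical process has a rank-one (first Hermite) leading behaviour, whereas the $U$-process has rank two. Writing $G(r)\eqdef \PE[\1_{\{|X_1-X_1'|\le r\}}]$ for the limiting correlation integral ($X_1'$ an independent copy of $X_1$), $g\eqdef G'$ for its density, and $r_\star\eqdef G^{-1}(1/4)$, we have $g(r_\star)>0$ and, by the consistency normalisation \eqref{eq:c(Phi)}, $\sigma=c(\Phi)r_\star$. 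Since $T_1(F_n)[r]=U_n(r)+n^{-1}$, relation \eqref{eq:definition-Qn} gives
$$
\QRC[][\Phi]{n}{\chunk{X}{1}{n}}-\sigma=c(\Phi)\left\{\bigl(U_n+n^{-1}\bigr)^{-1}(1/4)-r_\star\right\}\eqsp,
$$
so everything reduces to the fluctuations of the generalised inverse of $U_n+n^{-1}$ at level $1/4$.

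First I would set up a Bahadur-type representation. The quantile functional $T_2:U\mapsto U^{-1}(1/4)$ is Hadamard differentiable at $G$ tangentially to the functions continuous at $r_\star$, with derivative $v\mapsto -v(r_\star)/g(r_\star)$; this is the standard differentiability of the inverse map already used for $T_2$ in the short-range argument, and only requires $G$ to be continuously differentiable with $g(r_\star)>0$, which holds as $G$ is the c.d.f.\ of $|X_1-X_1'|$. The $n^{-1}$ shift is negligible at both rates, since $\sqrt n\cdot n^{-1}\to 0$ and $n^{D}L^{-1}(n)\cdot n^{-1}\to0$. Hence the functional delta method yields, for whichever normalisation $a_n$ makes $a_n(U_n-G)$ converge weakly in $\bigl(\SkhSpace{[0,\infty]},\supnorm{\cdot}\bigr)$,
$$
a_n\left(\QRC[][\Phi]{n}{\chunk{X}{1}{n}}-\sigma\right)=-\frac{c(\Phi)}{g(r_\star)}\,a_n\bigl(U_n(r_\star)-1/4\bigr)+o_P(1)\eqsp,
$$
reducing the problem to the asymptotics of the $U$-process at the single point $r_\star$, which are supplied by Propositions~\ref{theo:U_n_D>1/2} and \ref{theo:D<1/2}.

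For $D>1/2$, Proposition~\ref{theo:U_n_D>1/2} gives $\sqrt n(U_n-G)\cd$ a mean-zero Gaussian process; evaluating at $r_\star$ and inserting it in the display above produces a Gaussian limit at rate $\sqrt n$. The degree-one (Hájek) projection of $U_n$ is $\tfrac2n\sum_i h_1(X_i)$ with $h_1(x)=\PP(|x-X_1'|\le r_\star)-1/4$; this function is \emph{even} in $x$, so its first Hermite coefficient vanishes and its Hermite rank is $2$, which is exactly why $\sqrt n$ is the correct rate when $\sum_k|\gamma(k)|^2<\infty$, i.e.\ when $D>1/2$. Comparing \eqref{IF:Q} with $h_1$ shows that $\IF(\cdot,Q,\Phi_{0,\sigma})$ equals $-c(\Phi)h_1/g(r_\star)$ up to the normalising constant $g(r_\star)$, so the Breuer--Major variance of $h_1$ reproduces exactly the expression \eqref{def:sigma_tilde}; I would conclude that the variance is identical to $\widetilde\sigma^2$ of Theorem~\ref{theo:short-range} and established in the same way. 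For $D<1/2$, Proposition~\ref{theo:D<1/2} gives $n^{D}L^{-1}(n)(U_n(r)-G(r))$ converging to a second-chaos process of the form $r\mapsto A(r)Z_{2,D}(1)+B(r)Z_{1,D}^2(1)$: the $Z_{2,D}$ part comes from the rank-two term $\tfrac1n\sum_i H_2(X_i/\sigma)$, the $Z_{1,D}^2$ part from the $(1,1)$ (product) term $\bigl(\sum_i X_i\bigr)^2$ of the Hoeffding decomposition, both of order $n^{-D}L(n)$ by \eqref{e:dn}. Substituting $r=r_\star$ into the representation above and collecting the constants $c(\Phi)$, $g(r_\star)$ and $\beta(D)=\B((1-D)/2,D)$ yields the stated limit $\tfrac{\sigma}{2}\bigl(Z_{2,D}(1)-Z_{1,D}^2(1)\bigr)$.

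The main obstacle is the $D<1/2$ case. Two points need care. First, the delta method must be applied to a sequence converging at a non-standard rate to a non-Gaussian limit in the second Wiener chaos; this is legitimate because Hadamard differentiability of $T_2$ transfers any weakly convergent $a_n(U_n-G)$, but one must verify that the limit process has sample paths continuous at $r_\star$, so that it lies in the tangent set of $T_2$ at $G$ (this follows from the continuity of the deterministic factors $A$ and $B$). Second, and more delicate, is the bookkeeping of constants: one must check that $A(r_\star)$ and $B(r_\star)$, after division by $g(r_\star)$ and multiplication by $c(\Phi)$ and $\beta(D)$, combine into precisely $\tfrac{\sigma}{2}(Z_{2,D}(1)-Z_{1,D}^2(1))$, which requires tracking exactly the normalisations of $Z_{1,D}$ and $Z_{2,D}$ and the factor $\alpha(D)$ in \eqref{e:dn}, and relies on the detailed form of Proposition~\ref{theo:D<1/2} proved in the companion paper \cite{boistard:levy:2009}.
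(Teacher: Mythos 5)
Your proposal follows essentially the same route as the paper's proof: reduction by the functional delta method (Hadamard differentiability of the quantile map $T_2$, van der Vaart's Lemma 21.3) to the one-point fluctuations of the $U$-process at $r_0=\sigma/c(\Phi)$, then invocation of Propositions \ref{theo:U_n_D>1/2} and \ref{theo:D<1/2} for the two regimes, with the H\'ajek projection $k_1(\cdot,r_0)-1/4$ identified with $\IF(\cdot,Q,\Phi)$ in case (i) and the constants $c(\Phi)$, $(T_1(\Phi))'[r_0]$ and $\dot{\phi}(r_0/\sqrt{2})$ combined into the factor $1/2$ in case (ii). The only pieces of the paper's argument you leave implicit are the (routine) verification of the Lipschitz-type hypotheses of those two propositions for the kernel $\1_{\{|x-y|\leq r\}}$, and the fact that $\alpha_{2,0}(r)=-\alpha_{1,1}(r)=\dot{\phi}(r/\sqrt{2})$ is nonzero on a neighborhood of $r_0$ (Lemma \ref{lem:hermite_rank_T1(F_n)}), which is what guarantees both that the Hermite rank is exactly $2$ on that interval and that the case (ii) limit is nondegenerate.
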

Theorem~\ref{theo:Q_n_long-range} is proved in
Section \ref{sec:proofs}.

\begin{remark}\label{rem:asymetry}
Note that in the case (ii) the limit distribution is not centered and
is asymmetric. Moreover, it can be proved (see
\cite{boistard:levy:2009}) that $\PE[Z_{2,D}(1)-Z_{1,D}(1)^2]=-2\beta(D)/(-D+1)(-D+2)$.
\end{remark}

\begin{remark}\label{rem:eff_scale_long}
 Under Assumption (A\ref{assum:long-range}), it is interesting to compare the asymptotic distribution of the
 proposed estimator $\QRC[][\Phi]{n}{\chunk{X}{1}{n}}$ with that of the square root of the sample variance
 $\widehat{\sigma}^2_{n,X}$ defined in (\ref{eq:empirical-variance}).


\begin{prop}\label{p:sigma2}
Suppose Assumption (A\ref{assum:long-range}). Then as $n\to\infty$,
\begin{enumerate}
\item[(a)] if $D>1/2$, 
$$\sqrt{n} \left(\widehat{\sigma}_{n,X} - \sigma
 \right)\stackrel{d}{\longrightarrow}\mathcal{N}(0,\widetilde{\sigma}^2_{cl})\; ,$$
where $\widetilde{\sigma}^2_{cl}$ is given in (\ref{e:lvar1})
\item[(b)] if $D<1/2$, 
\begin{equation}\label{e:lsigma2}
\beta(D) n^D L(n)^{-1}(\widehat{\sigma}_{n,X}-\sigma) \stackrel{d}{\longrightarrow}
\sigma/2 \left(Z_{2,D}(1) - Z^2_{1,D}(1) \right)\; .
\end{equation}
\end{enumerate}
The rates of convergence of the square
root of the sample variance $\widehat{\sigma}_{n,X}$ and of
the robust estimator $\QRC[][\Phi]{n}{\chunk{X}{1}{n}}$ are
identical. Moreover, there is no loss of efficiency when $D < 1/2$.
\end{prop}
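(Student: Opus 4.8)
The plan is to reduce the whole statement to the asymptotics of two Hermite components and then apply the delta method for the square root. Writing $\sigma^2 = \gamma(0)$ and $Y_i = X_i/\sigma$, I would start from the algebraic identity
\begin{equation*}
\widehat{\sigma}^2_{n,X} - \sigma^2 = \frac{1}{n-1}\sum_{k=1}^n (X_k^2 - \sigma^2) + \frac{\sigma^2}{n-1} - \frac{n}{n-1}\bar{X}_n^2 \eqsp,
\end{equation*}
which follows from $\sum_k (X_k - \bar{X}_n)^2 = \sum_k X_k^2 - n\bar{X}_n^2$. Since $X_k^2 - \sigma^2 = \sigma^2 H_2(Y_k)$ and $\bar{X}_n = \sigma\, n^{-1}\sum_k H_1(Y_k)$, the first term carries Hermite rank $2$ while the centering term $\bar{X}_n^2$ is the square of a Hermite rank $1$ partial sum. (Equivalently, expanding the $U$-statistic kernel in (\ref{eq:empirical-variance}) as $\tfrac12(x-y)^2 = \tfrac{\sigma^2}{2}H_2(x/\sigma) + \tfrac{\sigma^2}{2}H_2(y/\sigma) - \sigma^2 H_1(x/\sigma)H_1(y/\sigma) + \sigma^2$ produces the same two contributions.) At the end I would apply the delta method to $x \mapsto \sqrt{x}$ at $\sigma^2$, whose derivative is $1/(2\sigma)$, to pass from $\widehat{\sigma}^2_{n,X}$ to $\widehat{\sigma}_{n,X}$.

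For part (a), $D > 1/2$, I would note $\sum_h \gamma(h)^2 < \infty$ because $2D > 1$, so (\ref{eq:convergence-autocorrelation}) holds for the rank-$2$ function $H_2$ and the Breuer--Major theorem (\ref{eq:convergence-loi-sum}) gives $n^{-1/2}\sum_k (X_k^2 - \sigma^2) \cd \mathcal{N}(0, 2(\gamma(0)^2 + 2\sum_{h\geq 1}\gamma(h)^2))$, the variance being read off from the single nonzero Hermite coefficient $\alpha_2(H_2) = 2$. The remaining terms are negligible at rate $\sqrt{n}$: since $\Var(\sum_k X_k) \sim c\, n^{2-D}L(n)$, one has $\bar{X}_n^2 = O_P(n^{-D}L(n))$, hence $\sqrt{n}\,\bar{X}_n^2 = O_P(n^{1/2 - D}L(n)) = o_P(1)$ for $D > 1/2$, and $\sqrt{n}\,\sigma^2/(n-1) \to 0$. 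Combining with the delta method yields $\sqrt{n}(\widehat{\sigma}_{n,X} - \sigma) \cd \mathcal{N}(0, \widetilde{\sigma}^2_{cl})$ with $\widetilde{\sigma}^2_{cl} = (2\gamma(0))^{-1}(\gamma(0)^2 + 2\sum_{h\geq 1}\gamma(h)^2)$, matching (\ref{e:lvar1}).

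For part (b), $D < 1/2$, the rank-$2$ partial sum no longer satisfies a CLT since $\sum_h \gamma(h)^2 = \infty$; instead, by the non-central limit theorem of \cite{taqqu:1975} (and \cite{breuer:major:1983}), $\sum_k H_2(Y_k)$ normalized by $\sim n^{1-D}L(n)$ converges to the Rosenblatt random variable $Z_{2,D}(1)$, while $\sum_k H_1(Y_k) = \sum_k Y_k$ normalized by $d_n$ of (\ref{e:dn}) converges to the fractional Brownian limit $Z_{1,D}(1)$. The decisive difference from case (a) is that $\bar{X}_n^2$ is now of the same order $n^{-D}L(n)$ as the rank-$2$ term, so it contributes to the limit and is responsible for the $Z_{1,D}^2(1)$ summand. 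I would therefore establish the joint convergence of the pair $(\sum_k H_1(Y_k), \sum_k H_2(Y_k))$, suitably normalized, to $(Z_{1,D}(1), Z_{2,D}(1))$, and then apply the continuous mapping theorem to form the square of the first coordinate. With the common normalization $\beta(D)n^D/L(n)$ this gives $\beta(D)n^D L(n)^{-1}(\widehat{\sigma}^2_{n,X} - \sigma^2) \cd \sigma^2(Z_{2,D}(1) - Z_{1,D}^2(1))$, and the delta method produces the stated limit $\tfrac{\sigma}{2}(Z_{2,D}(1) - Z_{1,D}^2(1))$.

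Two points deserve care and constitute the main work. First, the joint (bivariate) convergence of the rank-$1$ and rank-$2$ partial sums to $(Z_{1,D}(1), Z_{2,D}(1))$ cannot be deduced from the two marginal convergences alone; I would obtain it from the multivariate non-central limit theorem for vectors of Hermite polynomials of a long-memory Gaussian sequence, in the form available in the companion paper \cite{boistard:levy:2009} (the same machinery underlying Proposition~\ref{theo:D<1/2}). Second, the precise constant $\beta(D) = \B((1-D)/2, D)$ must be tracked so that the \emph{same} normalization governs both contributions; this is exactly the normalization built into the definitions (\ref{eq:fBm}) and (\ref{eq:rosenblatt}) of $Z_{1,D}$ and $Z_{2,D}$, and is where the slowly varying function $L$ and the constant $\alpha(D)$ of (\ref{e:dn}) are absorbed. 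Finally, the statements on rates and efficiency are immediate by comparison with Theorem~\ref{theo:Q_n_long-range}: the rates coincide in both regimes, and in the regime $D < 1/2$ the limiting variable $\tfrac{\sigma}{2}(Z_{2,D}(1) - Z_{1,D}^2(1))$ is identical for $\widehat{\sigma}_{n,X}$ and $\QRC[][\Phi]{n}{\chunk{X}{1}{n}}$, so there is no loss of efficiency.
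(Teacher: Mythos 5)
Your proposal is correct and follows essentially the same route as the paper: the paper's proof rests on exactly the Hoeffding decomposition (\ref{eq:hoeff_dec_sigma_n}) of $\widehat{\sigma}^2_{n,Y}-1$ into a rank-2 Hermite sum and a squared rank-1 term (algebraically identical to your decomposition), followed by the delta method for $x\mapsto\sqrt{x}$ at $\sigma^2$. The only difference is bookkeeping: where you verify the negligibility of $\bar{X}_n^2$ in case (a) by an explicit variance bound and propose a multivariate non-central limit theorem plus continuous mapping in case (b), the paper simply cites \cite{dehling:taqqu:1991} both for the domination of the rank-2 term when $D>1/2$ and for the joint limit yielding $Z_{2,D}(1)-Z^2_{1,D}(1)$ when $D<1/2$.
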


The proof of Proposition \ref{p:sigma2} is in Section \ref{sec:proofs}.

\end{remark}

\subsubsection{Properties of the autocovariance estimator}

In this section, we study the asymptotic properties of $\widehat{\gamma}_Q(\cdot,\chunk{X}{1}{n},\Phi)$
based on the asymptotic properties of  $\QRC[][\Phi]{n}{\chunk{X}{1}{n}}$.


\begin{theo}\label{theo:gamma_long}
Assume that (A\ref{assum:long-range}) holds and that
$L$ has three continuous derivatives. Assume also that
$L_i(x)=x^i L^{(i)}(x)$ satisfy: $L_i(x)/x^{\epsilon}=O(1)$, for some
$\epsilon$ in $(0,D)$, as $x$ tends to infinity, for all $i=0,1,2,3$, where
$L^{(i)}$ denotes the $i$th derivative of $L$.
Let $h$ be a non negative integer. 
Then, $\widehat{\gamma}_Q(h,\chunk{X}{1}{n},\Phi)$ satisfies the following limit theorems as
$n$ tends to infinity.

\begin{enumerate}[(i)]
\item If $D>1/2$,
$$
\sqrt{n}\left(\widehat{\gamma}_Q(h,\chunk{X}{1}{n},\Phi)-\gamma(h)\right)
\stackrel{d}{\longrightarrow}\mathcal{N}(0,{\check{\sigma}}^2(h))\; ,
$$
where
$$
\check{\sigma}^2(h)=\PE[\psi(X_1,X_{1+h})^2]+2\sum_{k\geq 1} \PE[\psi(X_1,X_{1+h}) \psi(X_{k+1},X_{k+1+h})]\; ,
$$
$\psi$ being defined in (\ref{def:psi}).
\item If $D<1/2$,
$$
\beta(D)\frac{n^{D}}{\widetilde{L}(n)}\left(\widehat{\gamma}_Q(h,\chunk{X}{1}{n},\Phi)-\gamma(h)\right)
\stackrel{d}{\longrightarrow}
\frac{\gamma(0)+\gamma(h)}{2}(Z_{2,D}(1)-Z_{1,D}(1)^2)
$$
where  $\beta(D)=\emph{B}((1-D)/2,D)$, $\emph{B}$ denotes the Beta
function, the processes $Z_{1,D}(\cdot)$ and $Z_{2,D}(\cdot)$ are
defined in (\ref{eq:fBm}) and (\ref{eq:rosenblatt}), and
\begin{equation}
\label{eq:expression-tildeL}
\widetilde{L}(n)=2L(n)+L(n+h)(1+h/n)^{-D}+L(n-h)(1-h/n)^{-D}\eqsp .
\end{equation}
\end{enumerate}
\end{theo}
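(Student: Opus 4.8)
The plan is to derive the asymptotic behavior of $\widehat{\gamma}_Q(h,\chunk{X}{1}{n},\Phi)$ from the analogous results for the scale estimator $\QRC[][\Phi]{n}{\cdot}$ already obtained in Theorem~\ref{theo:Q_n_long-range}, by writing the autocovariance estimator as a difference of two squared scale estimators and linearizing. Recall from \eqref{def:gamma_Q} that
\begin{equation*}
\widehat{\gamma}_Q(h,\chunk{X}{1}{n},\Phi)=\frac14\left[\QRC[2][\Phi]{n-h}{\chunk{X}{1}{n-h}+\chunk{X}{h+1}{n}}-\QRC[2][\Phi]{n-h}{\chunk{X}{1}{n-h}-\chunk{X}{h+1}{n}}\right]\; .
\end{equation*}
The sequences $Y_i^+=X_i+X_{i+h}$ and $Y_i^-=X_i-X_{i+h}$ are themselves stationary mean-zero Gaussian processes with variances $2(\gamma(0)\pm\gamma(h))$, so I can apply Theorem~\ref{theo:Q_n_long-range} to each of them separately. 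The first step is to verify that $(Y_i^\pm)$ still satisfies Assumption (A\ref{assum:long-range}): their autocovariances are $\gamma_\pm(k)=2\gamma(k)\pm\gamma(k+h)\pm\gamma(k-h)$, and since each term is $k^{-D}L(k)(1+o(1))$ with the same exponent $D$, the sum behaves like $k^{-D}\widetilde{L}(k)(1+o(1))$ where $\widetilde{L}$ is the slowly varying function in \eqref{eq:expression-tildeL}. The regularity hypotheses on $L$ (three continuous derivatives with the controlled growth of $L_i$) are precisely what guarantees that $\widetilde{L}$ is itself slowly varying and that the replacement of $L(n\pm h)$ by $L(n)$ in the normalizing sequence is legitimate uniformly.

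Next I would treat the two regimes separately. When $D>1/2$, each scale estimator is $\sqrt{n}$-asymptotically normal by part (i) of Theorem~\ref{theo:Q_n_long-range}. I would establish the \emph{joint} convergence of the pair $(\QRC[][\Phi]{n-h}{Y^+},\QRC[][\Phi]{n-h}{Y^-})$ — this requires a joint central limit theorem for the Hermite expansions of the two influence functions $\IF((\cdot)/\sigma_\pm,Q,\Phi)$ evaluated along the bivariate Gaussian sequence, which follows from the multivariate version of the Breuer--Major theorem. The asymptotic expansion \eqref{expansion:short-range} linearizes each scale estimator, and the delta method applied to $(u,v)\mapsto\tfrac14(u^2-v^2)$ then yields asymptotic normality of $\widehat{\gamma}_Q(h)$ with the stated covariance; the function $\psi$ in \eqref{def:psi} arises exactly as $\tfrac12[\sigma_+\IF(\cdot/\sigma_+)-\sigma_-\IF(\cdot/\sigma_-)]$ after writing $2\sigma_\pm\cdot\tfrac12\IF=\ldots$, so identifying $\check{\sigma}^2(h)$ amounts to bookkeeping on the cross-covariances.

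When $D<1/2$, the leading term is no longer the linear Breuer--Major term but the Rosenblatt-type term coming from the $U$-process, as explained in the discussion preceding Theorem~\ref{theo:Q_n_long-range}. Here part (ii) gives, for each sign, $\beta(D)n^D L_\pm(n)^{-1}(\QRC[][\Phi]{n-h}{Y^\pm}-\sigma_\pm)\cd\tfrac{\sigma_\pm}{2}(Z_{2,D}(1)-Z_{1,D}^2(1))$, where $L_\pm$ is the slowly varying function associated with $(Y_i^\pm)$. The crucial point is that the limiting random variable $Z_{2,D}(1)-Z_{1,D}^2(1)$ lives in the second Wiener chaos and is driven by the \emph{same} underlying fractional-Gaussian-noise increments for both $Y^+$ and $Y^-$, because both are linear functionals of the original process $(X_i)$; consequently the two limits are perfectly dependent and the difference of squares does \emph{not} cancel the dominant term but reinforces it. Writing $\QRC[2][\Phi]{}{Y^\pm}-\sigma_\pm^2=2\sigma_\pm(\QRC[][\Phi]{}{Y^\pm}-\sigma_\pm)+o_P$, taking the difference, and using $\sigma_\pm^2=2(\gamma(0)\pm\gamma(h))$ together with $\tfrac14\cdot 2\sigma_\pm\cdot\tfrac{\sigma_\pm}{2}=\tfrac{\gamma(0)\pm\gamma(h)}{2}$ produces the coefficient $\tfrac{\gamma(0)+\gamma(h)}{2}$ on the common limit after the normalizations $L_\pm(n)$ are both absorbed into the single $\widetilde{L}(n)$ of \eqref{eq:expression-tildeL}.

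The main obstacle is precisely this last identification of the \emph{common} limiting variable and the exact normalization $\widetilde{L}(n)$. I must show that the Rosenblatt limits for $Y^+$ and $Y^-$ are almost surely equal (not merely equal in law), which demands tracking the Hermite-rank-two projections through the $U$-process machinery of \cite{boistard:levy:2009} and checking that the leading chaos components coincide; and I must prove that replacing the two distinct slowly varying functions by the single $\widetilde{L}(n)$ introduces only an $o_P(1)$ error, which is where the derivative bounds on $L$ in the hypotheses do the real work. The $D>1/2$ case, by contrast, is essentially a routine application of the delta method once the joint CLT is in hand.
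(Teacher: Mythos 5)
Your decomposition into the two coupled scale estimators is the right starting point, and your case (i) is essentially the paper's argument (linearize both scale estimators and apply a joint CLT; the paper packages the two linearizations into the single function $\psi(X_i,X_{i+h})$ and invokes Arcones' Theorem 4 with Hermite rank $2$). But case (ii) contains a fatal error, located exactly where you placed your ``crucial point''. Your claim that both processes $Y_i^{\pm}=X_i\pm X_{i+h}$ satisfy (A\ref{assum:long-range}) with exponent $D$ is false for $Y^-$. Its autocovariance is a second-order central difference, $\gamma_-(k)=2\gamma(k)-\gamma(k+h)-\gamma(k-h)=-\left[g(k+h)-2g(k)+g(k-h)\right]$ with $g(x)=x^{-D}L(x)$: the leading terms $k^{-D}L(k)$ \emph{cancel}, and a Taylor expansion using the hypotheses on $L^{(i)}$ (this cancellation, not the harmless replacement of $L(n\pm h)$ by $L(n)$, is what those derivative bounds are for) gives $\gamma_-(k)=O(k^{-2-D+\epsilon})$, which is summable. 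Hence $(Y_i^-)$ is short-range dependent --- it satisfies (A\ref{assum:short-range}) --- and Theorem \ref{theo:Q_n_long-range}(ii) simply does not apply to it: there is no Rosenblatt-type limit for the minus scale estimator, and your proposed ``main obstacle'' (proving the two second-chaos limits are almost surely equal) is a dead end, because one of the two limits does not exist.

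The correct mechanism, which is the paper's proof, is asymmetric. By the short-range theory (Lemma \ref{prop:asym_expansion} plus the empirical-process CLT), $\sqrt{n-h}\,(\QRC[][\Phi]{n-h}{\chunk{X}{1}{n-h}-\chunk{X}{h+1}{n}}-Q(\Phi_{\sigma,-}))=O_P(1)$, so after the delta method the minus block $A_n^-$ in the decomposition $\widehat{\gamma}_Q(h,\chunk{X}{1}{n},\Phi)-\gamma(h)=A_n^+-A_n^-$ satisfies $\beta(D)n^{D}\widetilde{L}(n)^{-1}A_n^-=o_P(1)$ precisely because $D<1/2$; the entire limit comes from the plus process alone, whose variance $Q^2(\Phi_{\sigma,+})=2(\gamma(0)+\gamma(h))$ produces the coefficient $\tfrac14\cdot 2(\gamma(0)+\gamma(h))=\tfrac{\gamma(0)+\gamma(h)}{2}$, and whose slowly varying function is the $\widetilde{L}$ of \eqref{eq:expression-tildeL}. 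Note also that your scenario is internally inconsistent: if both limits really were $\tfrac{\sigma_\pm}{2}(Z_{2,D}(1)-Z_{1,D}^2(1))$ with the \emph{same} random variable, then your own difference-of-squares bookkeeping would give the coefficient $\tfrac{\gamma(0)+\gamma(h)}{2}-\tfrac{\gamma(0)-\gamma(h)}{2}=\gamma(h)$, not the $\tfrac{\gamma(0)+\gamma(h)}{2}$ stated in the theorem --- a clear sign that the ``perfect dependence / reinforcement'' picture cannot be right. (A minor further point: even in case (i), the $\sqrt{n}$ convergence of the empirical process of $Y^+$ cannot be obtained from a summable-covariance empirical CLT, since $\gamma_+(k)\sim 4k^{-D}L(k)$ is not summable for $D\leq 1$; the paper goes through the $U$-process result, Proposition \ref{theo:U_n_D>1/2}, for the plus process, and through Theorem \ref{theo:short-range}-type arguments for the minus one.)
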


Theorem \ref{theo:gamma_long} is proved in Section \ref{sec:proofs}.

\begin{remark}
Note that the assumptions on $L_i$ made in Theorem~\ref{theo:gamma_long} are obviously satisfied if $L$ is the
logarithmic function or a power of it.
\end{remark}

\begin{prop}\label{p:sigma3}
Under Assumption (A\ref{assum:long-range}) with $D<1/2$, the robust
autocovariance estimator $\widehat{\gamma}_Q(h,\chunk{X}{1}{n},\Phi)$
has the same asymptotic behavior as the classical autocovariance
estimator (\ref{e:ccov}). There is no loss of efficiency.
\end{prop}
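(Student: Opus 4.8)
The claim is that, in the long-range dependence regime with $D<1/2$, the robust autocovariance estimator $\widehat{\gamma}_Q(h,\chunk{X}{1}{n},\Phi)$ and the classical estimator $\widehat{\gamma}(h)$ in (\ref{e:ccov}) share the same limiting behavior, so that the robust procedure incurs no loss of efficiency. The plan is to compare the two limit theorems directly. From Theorem~\ref{theo:gamma_long}(ii), the robust estimator satisfies
\begin{equation*}
\beta(D)\frac{n^{D}}{\widetilde{L}(n)}\left(\widehat{\gamma}_Q(h,\chunk{X}{1}{n},\Phi)-\gamma(h)\right)
\stackrel{d}{\longrightarrow}
\frac{\gamma(0)+\gamma(h)}{2}\left(Z_{2,D}(1)-Z_{1,D}(1)^2\right)\; ,
\end{equation*}
with $\widetilde{L}$ given in (\ref{eq:expression-tildeL}). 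So the first step is to establish the analogous statement for the classical estimator $\widehat{\gamma}(h)$.

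First I would derive the limit theorem for $\widehat{\gamma}(h)$ under Assumption (A\ref{assum:long-range}) with $D<1/2$. Writing $\widehat{\gamma}(h)=n^{-1}\sum_{i=1}^{n-h}(X_i-\bar X_n)(X_{i+h}-\bar X_n)$, one expands and isolates the dominant term $n^{-1}\sum_{i=1}^{n-h}X_iX_{i+h}$; the terms involving $\bar X_n$ are of strictly smaller order, since $\bar X_n$ itself is $O_P(d_n/n)=O_P(n^{-D/2}L^{1/2}(n))$ and squares of it are negligible at the relevant rate. The function $(x,y)\mapsto xy-\gamma(h)$ applied to the bivariate Gaussian vector $(X_i,X_{i+h})$ has Hermite rank $2$, so for $D<1/2$ the Taqqu–Dobrushin–Major non-central limit theorem (\cite{taqqu:1975,breuer:major:1983}) gives convergence of the suitably renormalized sum to a second-Wiener-chaos (Rosenblatt-type) variable built from the same processes $Z_{1,D}$ and $Z_{2,D}$. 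The key is to track the exact normalizing constant: the covariance $\Cov(X_iX_{i+h},X_jX_{j+h})$ behaves like $\gamma(i-j)^2+\gamma(i-j+h)\gamma(i-j-h)\sim 2\gamma(i-j)^2$ as $|i-j|\to\infty$, and the $\gamma(\cdot)^2$ decay carries the slowly varying factor entering $\widetilde{L}(n)$ through the three-term combination (\ref{eq:expression-tildeL}); this is precisely where the definition of $\widetilde{L}$ is engineered so that the classical normalization matches the robust one.

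The second step is to read off the limiting random variable for the classical estimator and compare. The Hermite-rank-$2$ part of $xy-\gamma(h)$ has the same second-chaos structure as the quadratic functional appearing in Theorem~\ref{theo:Q_n_long-range}(ii), namely $Z_{2,D}(1)-Z_{1,D}(1)^2$, and the deterministic prefactor works out to $\tfrac{\gamma(0)+\gamma(h)}{2}$, identical to the robust limit. Thus the two estimators have the same rate $n^{D}/\widetilde{L}(n)$ and the \emph{same} (degenerate, non-Gaussian) limit law, whence no loss of efficiency. I expect the main obstacle to be the bookkeeping of the normalizing constants: one must verify that the slowly varying functions and the $\beta(D)$ factor coming from the diagram/chaos expansion of $\widehat{\gamma}(h)$ coincide exactly with those obtained for $\widehat{\gamma}_Q$, rather than merely agreeing up to a constant. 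Because both limits live in the second Wiener chaos and are driven by the single quadratic functional $Z_{2,D}(1)-Z_{1,D}(1)^2$, the comparison ultimately reduces to matching these scalar prefactors, which (as in Proposition~\ref{p:sigma2}(b) for the scale estimator) the construction of $\widetilde{L}$ guarantees.
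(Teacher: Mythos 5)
Your overall strategy---prove a limit theorem for the classical estimator $\widehat{\gamma}(h)$ under (A\ref{assum:long-range}) with $D<1/2$ and match it against Theorem~\ref{theo:gamma_long}(ii)---is the right one, and it is also what the paper does. But your execution contains a genuine error that would derail the argument: you claim that after isolating $n^{-1}\sum_{i=1}^{n-h}X_iX_{i+h}$, ``the terms involving $\bar X_n$ are of strictly smaller order'' because $\bar X_n=O_P(n^{-D/2}L^{1/2}(n))$ and its square is negligible. It is not. For $D<1/2$ one has $\bar X_n^2=O_P(n^{-D}L(n))$, which is \emph{exactly} the order of the fluctuations $\widehat{\gamma}(h)-\gamma(h)$, since $\widetilde{L}(n)$ in \eqref{eq:expression-tildeL} is comparable to $L(n)$; thus $n^{D}\widetilde{L}(n)^{-1}\bar X_n^2=O_P(1)$ and cannot be discarded. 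This is not a technicality: the mean-correction term is precisely what produces the $-Z_{1,D}^2(1)$ part of the limit, while the Hermite-rank-2 part of the uncentered sum $n^{-1}\sum(X_iX_{i+h}-\gamma(h))$ contributes only the Rosenblatt part $Z_{2,D}(1)$. Your proposal is therefore internally inconsistent: having declared the $\bar X_n$ terms negligible, you nevertheless assert that the limit of the classical estimator is proportional to $Z_{2,D}(1)-Z_{1,D}(1)^2$; under your negligibility claim it would be proportional to $Z_{2,D}(1)$ alone, and the asserted match with the robust estimator would fail (or rather, would be reached only by fiat). The same mechanism is visible in the paper's Proposition~\ref{p:sigma2}: in the Hoeffding decomposition \eqref{eq:hoeff_dec_sigma_n}, the degenerate term $-\frac{1}{n(n-1)}\sum_{i\neq j}Y_iY_j$ (essentially $-\bar Y_n^2$) is \emph{not} negligible when $D<1/2$ and supplies $-Z_{1,D}^2(1)$.

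For comparison, the paper avoids this bookkeeping by a different decomposition: it writes the classical estimator through the sum/difference identity \eqref{eq:classi_auto}, $\widehat{\gamma}(h)\approx\frac14\bigl(\widehat{\sigma}_+^2-\widehat{\sigma}_-^2\bigr)$, applies Proposition~\ref{p:sigma2}(b) together with the Delta method to the long-range dependent process $(X_i+X_{i+h})_{i\geq1}$ (whose autocovariance $2\gamma(k)+\gamma(k+h)+\gamma(k-h)=k^{-D}\widetilde{L}(k)$ is where the three-term slowly varying function \eqref{eq:expression-tildeL} actually comes from---not from $\Cov(X_iX_{i+h},X_jX_{j+h})$ as you suggest), and kills the ``$-$'' part by observing that $(X_i-X_{i+h})_{i\geq1}$ is short-range dependent, so $\widehat{\sigma}_-^2-\sigma_-^2=O_P(n^{-1/2})=o_P(n^{-D}\widetilde{L}(n))$. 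Your direct Hosking-style route can be made to work, but only if you keep the $-\bar X_n^2$ term, prove joint convergence of $\bigl(n^{D}L(n)^{-1}n^{-1}\sum(X_iX_{i+h}-\gamma(h)),\, n^{D}L(n)^{-1}\bar X_n^2\bigr)$ to $(c_1Z_{2,D}(1),\,c_2Z_{1,D}^2(1))$ driven by the same underlying Brownian motion, and check that the constants combine to $\frac{\gamma(0)+\gamma(h)}{2}$ under the normalization $\beta(D)n^{D}/\widetilde{L}(n)$.
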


Proposition \ref{p:sigma3} is proved in Section \ref{sec:proofs}.



\section{Numerical experiments}\label{sec:exp}

In this section,  we investigate the robustness properties of the
estimator $\widehat{\gamma}_Q(h,\chunk{X}{1}{n},\Phi)$ in
(\ref{def:gamma_Q_init}),
using Monte Carlo experiments.

We shall regard the observations $X_t$, $ t=1,\dots,n$, as a
stationary series $Y_t$, $ t=1,\dots,n$, corrupted by additive
outliers of magnitude $\omega$. Thus we set 
\begin{align}\label{eq:ao}
X_t=Y_t+\omega W_t,
\end{align}
where $W_{t}$ are i.i.d.  random variables such that 
$\PP\left(W=-1\right)=\PP\left(W=1\right)=p/2$ and $\PP\left(W=0\right)=1-p$, where $\mathbb{E}[W]=0$ and
$\mathbb{E}[W^2]=\Var(W)=p$. Observe that $W$ is the product of Bernoulli($p$)
and \textit{Rademacher} independent random variables; the latter equals $1$ or $-1$, both with
probability $1/2$.
$(Y_t)$ is a stationary time series and it is assumed that $Y_t$ and $W_{t}$ are independent random variables.
The empirical study is based on 5000 independent replications  with
$n=100$, 500, $p=5\%, 10\%$ and $\omega= 10$.  Other cases were
also simulated, for example, series with $\omega= 3,5$ which are
magnitudes that cause less impact on the estimates compared with
$\omega= 10$. These additional results   are available upon request.

We consider first the case where $Y_t$ follows a Gaussian AR(1)
process,  that is, $Y_t=\sum_{j\geq 0}\phi_1^j Z_{t-j}$ with
$\phi_1=0.2, 0.5$ and $\{Z_t\}$ i.i.d $\mathcal{N}(0,1)$.
Then we suppose that, $Y_t$ are Gaussian
ARFIMA$(0,d,0)$ processes, that is,
\begin{equation}\label{e:FARIMA}
Y_t=(I-B)^{-d} Z_t=\sum_{j\geq 0}\frac{\Gamma(j+d)}{\Gamma(j+1)\Gamma(d)}Z_{t-j}
\end{equation}
with $d$= 0.2, 0.45 and $\{Z_t\}$ i.i.d $\mathcal{N}(0,1)$.

Classically, scale is measured by the standard deviation $\sigma$.
The robust measure of scale we consider here is $Q(F_X)$, defined
in (\ref{eq:def:Q}). Recall that one has $\sigma=Q(F_X)$ in the
Gaussian case (see \ref{e:CovQ}). We want to compare their respective
estimators $\widehat{\sigma}_{n,X}$ defined in
(\ref{eq:empirical-variance}) and $\QRC[][\Phi]{n}{\chunk{X}{1}{n}}$
defined in (\ref{eq:definition-Qn}).

The standard deviations of the AR(1) models are $Q(F_Y)=\sigma_Y = 1.0206$ and
$Q(F_Y)=\sigma_Y = 1.1547$ for $\phi_1$= 0.2 and $\phi_1$ = 0.5, respectively.
In the case of ARFIMA processes, the standard deviations are $Q(F_Y)=\sigma_Y=1.0481$ when $d=0.2$
and $Q(F_Y)=\sigma_Y=1.9085$ when $d=0.45$.
This is because the variance of AR(1) is $(1-\phi_1^2)^{-1}$ and
that of ARFIMA(0,$d$,0) is $\Gamma(1-2d)/\Gamma^2(1-d)$ (see \cite{brockwell:davis:1991}).
Figure \ref{fig:AR} and Table \ref{table} involve
AR processes, and Figures \ref{fig:ARFIMA_0.2}, \ref{fig:ARFIMA_0.45}
and \ref{fig:autocorr_simul} involve the ARFIMA processes.

\subsection{Short-range dependence case}

  Figure \ref{fig:AR} gives some insights on Theorem
  \ref{theo:short-range} and Proposition \ref{p:sigma}. In the left part of Figure
  \ref{fig:AR}, the empirical
  distribution of the quantities $\sqrt{n}(\QRC[][\Phi]{n}{\chunk{X}{1}{n}}-\sigma_Y)$ and
$\sqrt{n}(\widehat{\sigma}_n-\sigma_Y)$ are displayed. Both present
shapes close to the Gaussian density, and their standard deviations are
equal to 0.8232 and 0.7377, respectively.
These empirical standard deviations are close to
0.8233 and 0.7500 which are the values of  the asymptotic
standard deviation $\widetilde{\sigma}$ in (\ref{def:sigma_tilde})
and that of $\sqrt{n}(\widehat{\sigma}_n-\sigma_Y)$ in
(\ref{e:lvar1}), respectively.
The value 0.8233 was obtained through numerical simulations and the
value 0.7500 from the fact that for an AR(1)
$\gamma(k)=\phi_1^k(1-\phi_1^2)^{-1}$
and hence $\widetilde{\sigma}^2_{cl}=(1+2\phi_1^2)/(2(1-\phi_1^2))$ in (\ref{e:lvar1}).
Hence the empirical evidence fits with the
theoretical results of Theorem~\ref{theo:short-range} and Proposition \ref{p:sigma}.

In the right part of Figure \ref{fig:AR}, we display the results when
outliers are present.
The  empirical distribution of $\sqrt{n}(\widehat{\sigma}_n-\sigma_Y)$ is
clearly located far away
from zero. One can also observe the increase in the variance.
The quantity
$\sqrt{n}(\QRC[][\Phi]{n}{\chunk{X}{1}{n}}-\sigma_Y)$ looks symmetric
and is located close to zero.

\begin{figure}[!ht]
\begin{tabular}{cc}
\includegraphics*[width=7cm,height=5cm]{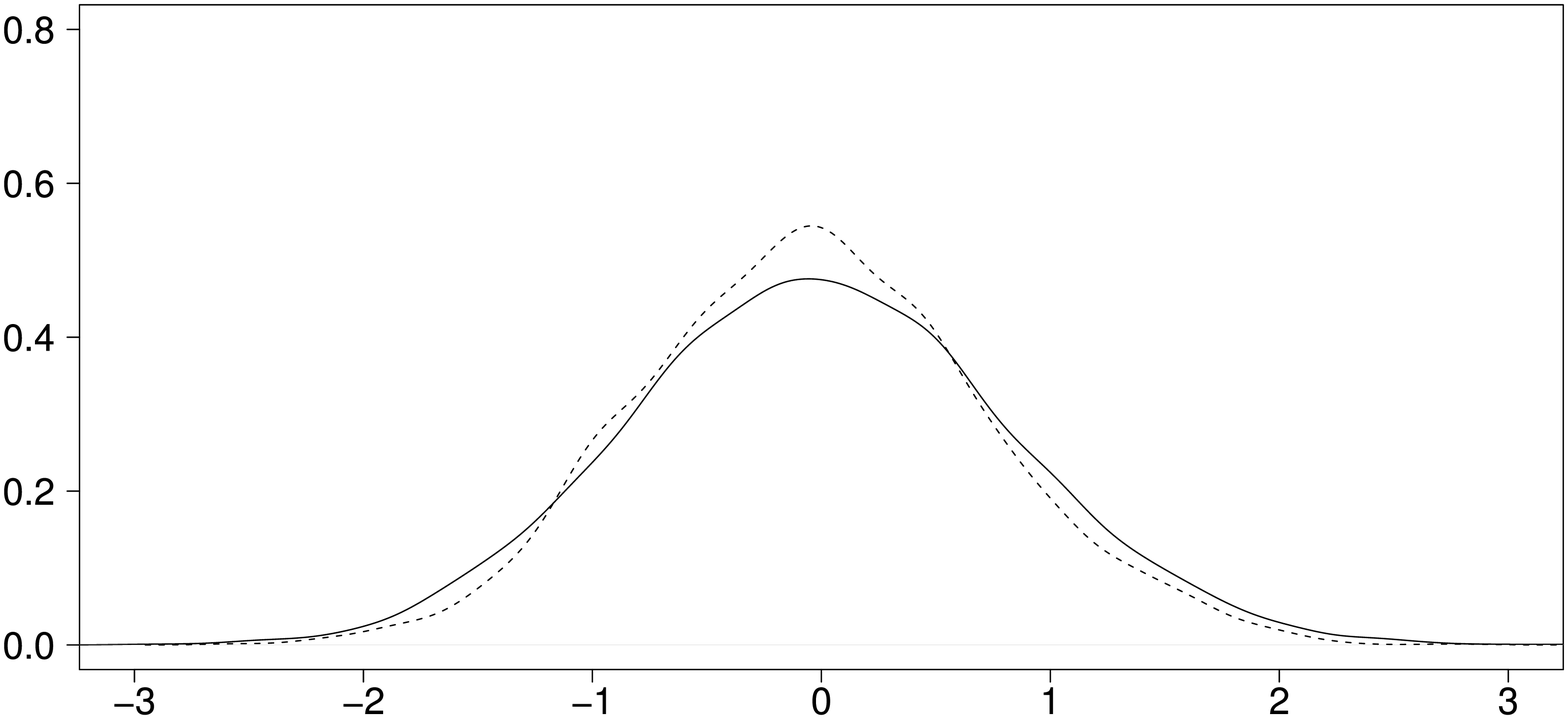}
&\includegraphics*[width=7cm,height=5cm]{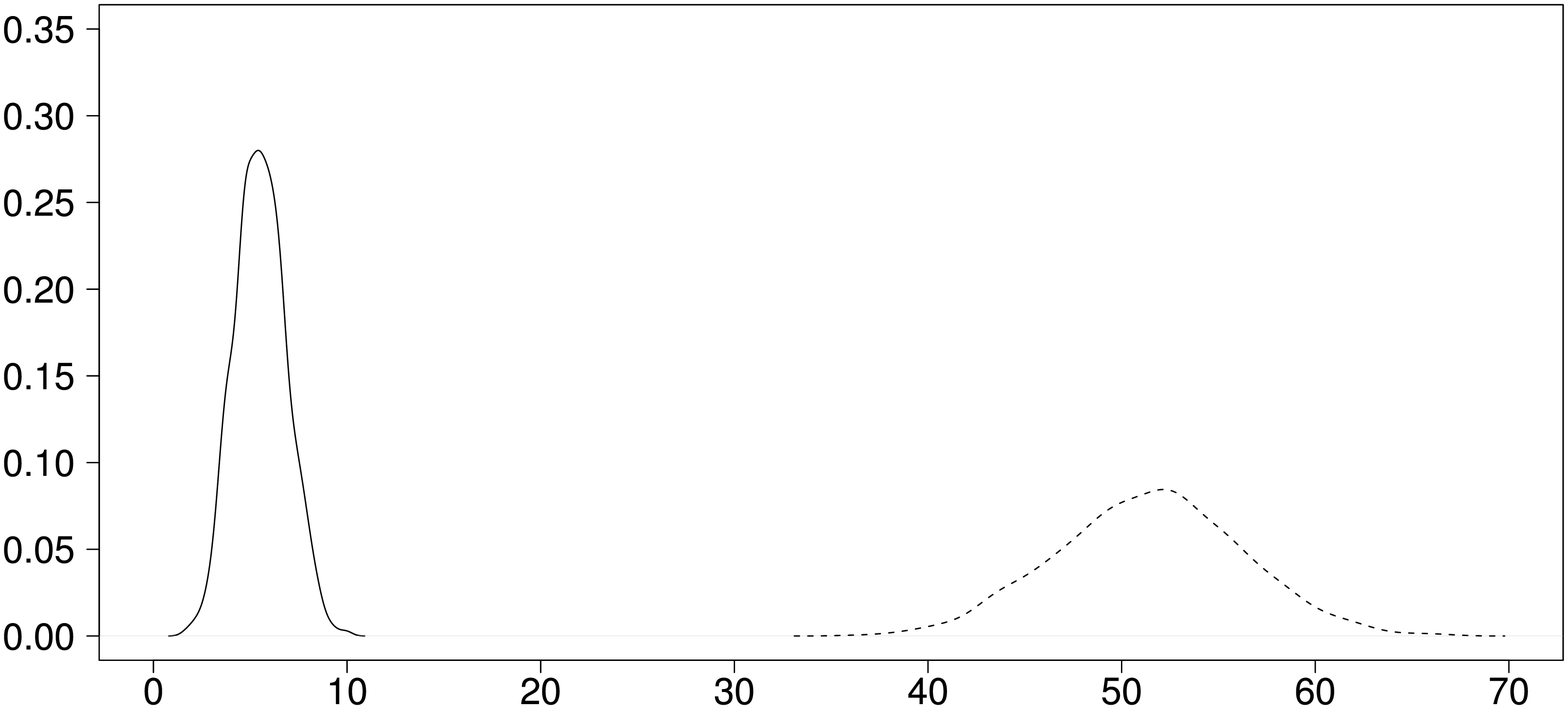}
\end{tabular}
\caption{\footnotesize{Empirical densities   of the
    quantities $\sqrt{n}(\QRC[][\Phi]{n}{\chunk{X}{1}{n}}-\sigma_Y)$
    (plain line) and
    $\sqrt{n}(\widehat{\sigma}_n-\sigma_Y)$ (dotted line) of the AR(1) model with $\phi_1
    =0.2$, $n$=500, without outliers (left) and with outliers with $p=10\%$
    and $\omega=10$ (right).}}
\label{fig:AR}
\end{figure}

We now turn to the estimation of the autocovariances. We want to use
them to get estimates for the AR(1) coefficient $\phi_1$.
The results are in Table \ref{table}.
In this table, $\widehat{\phi}_{1,\gamma}$  and $\widehat{\phi}_{1,Q}$
denote the average
of the Yule-Walker  estimates of the AR coefficients
based on the classical estimator of the covariance $\gamma$ and the
robust autocovariance
estimator $\widehat{\gamma}_Q(h,\chunk{X}{1}{n},\Phi)$ 
in (\ref{def:gamma_Q_init}), respectively. The numbers in parentheses are
the corresponding square root of the sample mean squared errors. The classical
estimates were obtained using the subroutine DARMME in FORTRAN which
uses a method of moments.  The
robust autocovariance and autocorrelation estimates were calculated
using the  code given in \cite{Rousseeuw:Croux:1992}.

\begin{table} [ht]
\centering\caption{\footnotesize{Results for the estimation of AR(1) model with
  $\omega$ =10}}
\begin{center}
\footnotesize{
\begin{tabular}{cccccccc}
  \hline\hline
\multicolumn{2}{r}{ }&\multicolumn{2}{c}{$p =0$ }&\multicolumn{2}{c}{$p =5\%$ }&\multicolumn{2}{c}{$p=10\%$ }\\
 & & & & & & & \\[-2ex]
 \cline{3-8}
 & & & & & & & \\[-2ex]
$\phi_1$ &$n$   & $\widehat{\phi}_{1,\gamma}$ &$\widehat{\phi}_{1,Q}$&
$\widehat{\phi}_{1,\gamma}$&$\widehat{\phi}_{1,Q}$&$\widehat{\phi}_{1,\gamma}$
&$\widehat{\phi}_{1,Q}$\\
  \hline
 $0.2$ & 100& 0.1818 &0.1831 &0.0312  &0.2212&0.01530&0.2651\\
       &    & (0.0112)&(0.0128)&(0.0376)&(0.0229)&(0.0435)&(0.0388)\\
  & 500& 0.1967  &  0.1948 &0.0318&0.2381&0.0163&0.2881\\
   &    & (0.0019)&(0.0025)&(0.0303)&(0.0051)&(0.0357)&(0.0150)\\
\hline
   $0.5$ & 100& 0.4767& 0.4747&0.0998  &0.5762&0.0495&0.6924\\
       &    & (0.0084)&(0.0106)&(0.1740)&(0.0262)&(0.2142)&(0.0712)\\
  & 500& 0.4967  &  0.4927 &0.1030&0.6012&0.05647&0.7216\\
   &    & (0.0015)&(0.0021)&(0.1598)&(0.0141)&(0.1988)&(0.0558)\\
\hline
\hline
\end{tabular}}
\end{center}
\label{table}
\end{table}

It can be seen from Table \ref{table} that both autocovariances yield similar
estimates for $\phi_1$ when the process does not
contain outliers. However, the picture changes
significantly when the series is contaminated by atypical
observations. As expected, the estimates from the classical
autocovariance estimator are extremely sensitive to the presence of
additive outliers. As noted in
\cite{fajardo:reisen:cribari:2009}, for a fixed lag $k$,  the
classical autocorrelation tends to zero as the weight $\omega \rightarrow
\infty$, and this produces a loss of memory  property (that is, the
dependence structure of the model is reduced),
and consequently this leads to parameter estimates with significant
negative bias.
It is worth noting that the estimator based on the robust
autocovariance (\ref{def:gamma_Q})
yields much more accurate estimates when the data contain outliers.

\subsection{Long-range dependence case}

In the case of the long-memory process ARFIMA$(0,d,0)$ defined in
(\ref{e:FARIMA}), we choose
$d=0.2$ and $d=0.45$, corresponding respectively to $D=0.6$ and
$D=0.1$ (see \ref{e:Dd}). In the first case $D>1/2$, in the second, 
$D<1/2$, corresponding to the two cases of Theorem
\ref{theo:Q_n_long-range}.
For $d=0.2$, the empirical density functions
of $\sqrt{n}(\QRC[][\Phi]{n}{\chunk{X}{1}{n}}-\sigma_Y)$ and
$\sqrt{n}(\widehat{\sigma}_n-\sigma_Y)$ are displayed in Figure
\ref{fig:ARFIMA_0.2} with and without outliers.
When there is no outlier, both shapes are similar to that of the Gaussian
density, and their standard deviations are
equal to 0.9043 and 0.8361, respectively, corresponding to an
asymptotic relative efficiency of 85.48$\%$.
As shown in the right part of Figure \ref{fig:ARFIMA_0.2},
the classical scale estimator $\widehat{\sigma}_n$ is much more sensitive
to outliers than the robust one $Q_n$. The empirical density in the
case of outliers is centered around 50.

\begin{figure}[!ht]
\begin{tabular}{cc}
\includegraphics*[width=7cm,height=5cm]{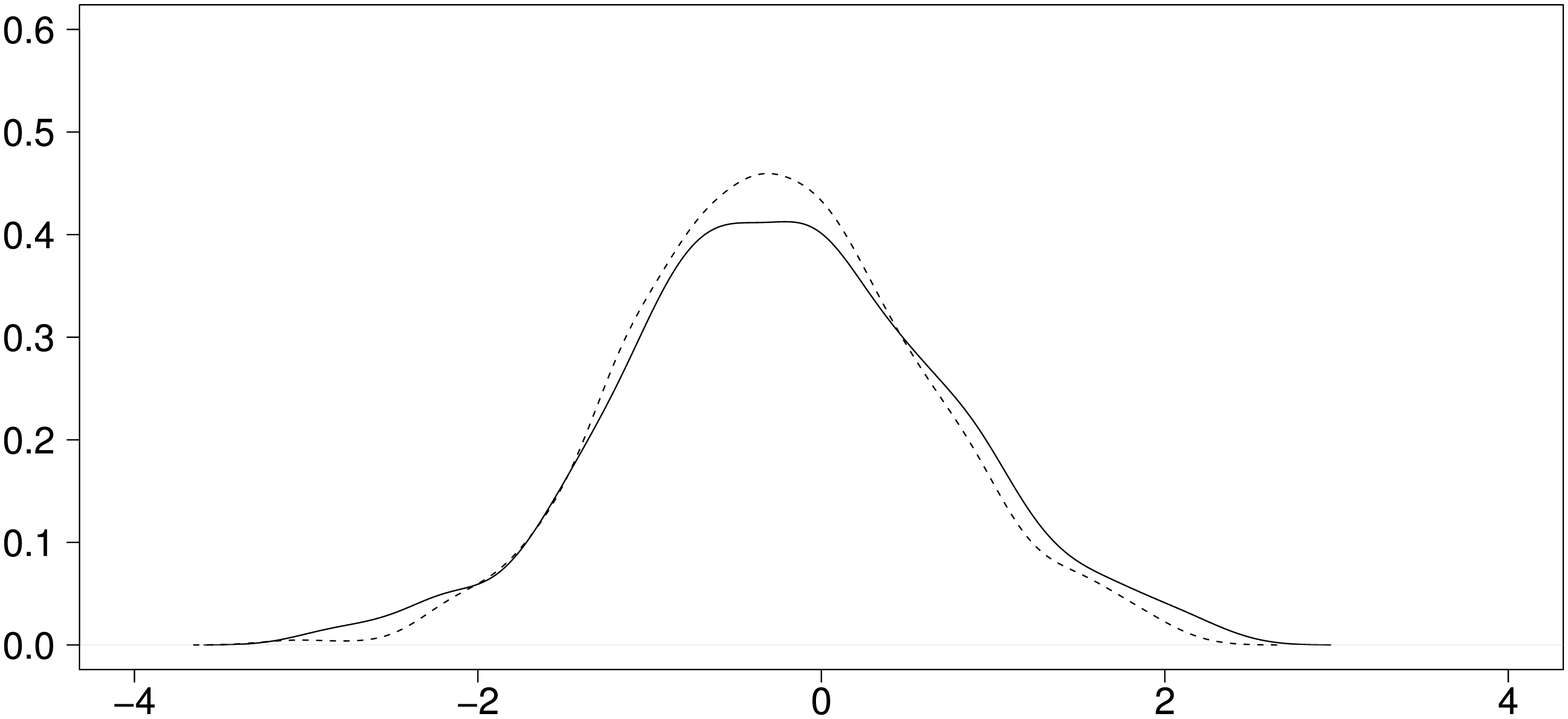}
& \includegraphics*[width=7cm,height=5cm]{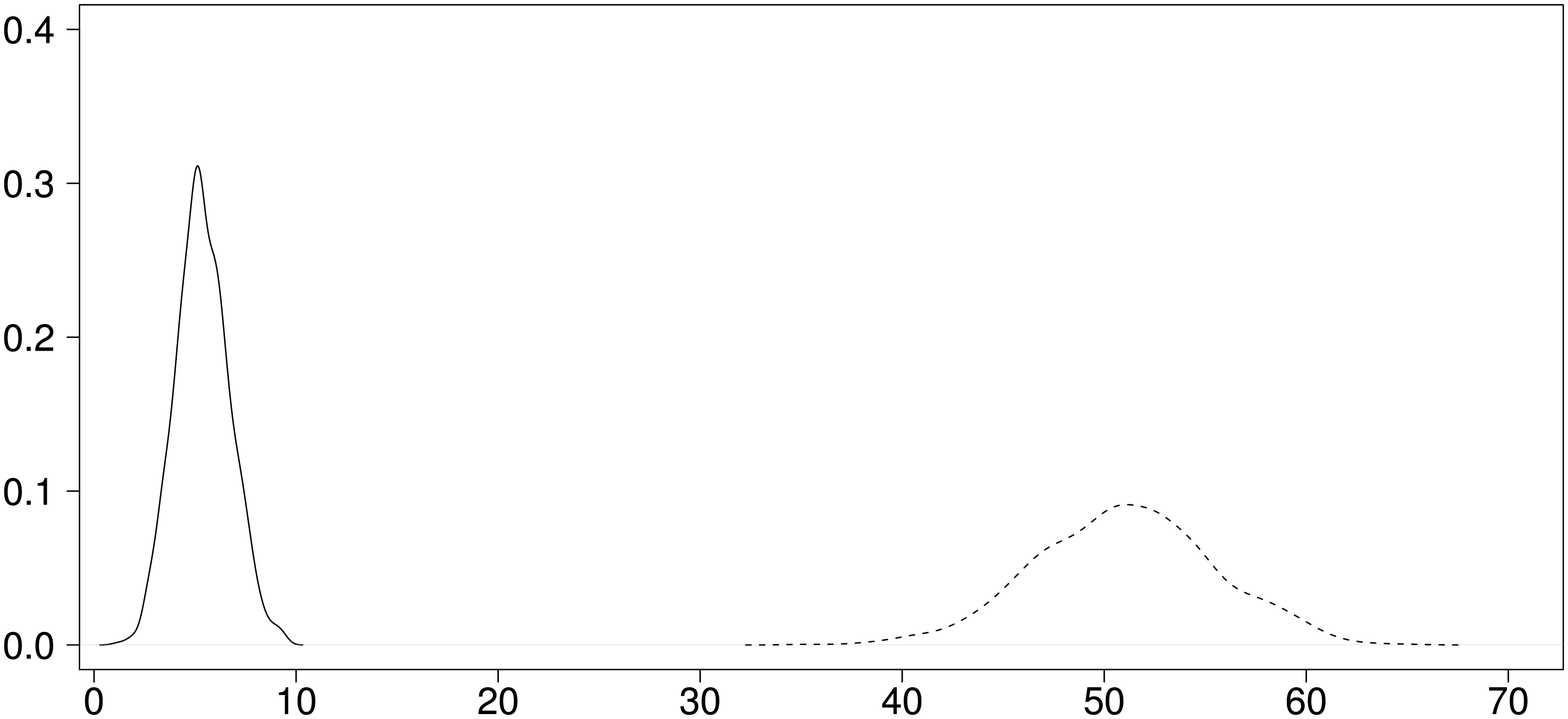}
\end{tabular}
\caption{\footnotesize{Empirical densities of
    $\sqrt{n}(\QRC[][\Phi]{n}{\chunk{X}{1}{n}}-\sigma_Y)$ (plain line) and
    $\sqrt{n}(\widehat{\sigma}_n-\sigma_Y)$ (dotted line)
    for the
    ARFIMA$(0,d,0)$ model with $d =0.2$, $n$=500 without
    outliers (left) and with outliers with $p=10\%$
    and $\omega=10$ (right).}}
\label{fig:ARFIMA_0.2}
\end{figure}

To illustrate part (ii) of Theorem~\ref{theo:Q_n_long-range},
we consider the empirical density functions 
of the quantities $n^{1-2d}(\QRC[][\Phi]{n}{\chunk{X}{1}{n}}-\sigma_Y)$ and
$n^{1-2d}(\widehat{\sigma}_n-\sigma_Y)$ when $d=0.45$ ($D=0.6$) as
displayed in Figure \ref{fig:ARFIMA_0.45}.
The left part of Figure \ref{fig:ARFIMA_0.45} shows densities
having means close to -1.1161 which is the value of the theoretical
mean given in Remark \ref{rem:asymetry}. Both curves present, in fact, similar
empirical standard deviation which is in accordance with 
Proposition \ref{p:sigma2}.
The impact of outliers
on the estimates is clearly shown in the right side of
Figure \ref{fig:ARFIMA_0.45} where one observes patterns similar to
those of the previous examples.

\begin{figure}[!ht]
\begin{tabular}{cc}
\includegraphics*[width=7cm,height=5cm]{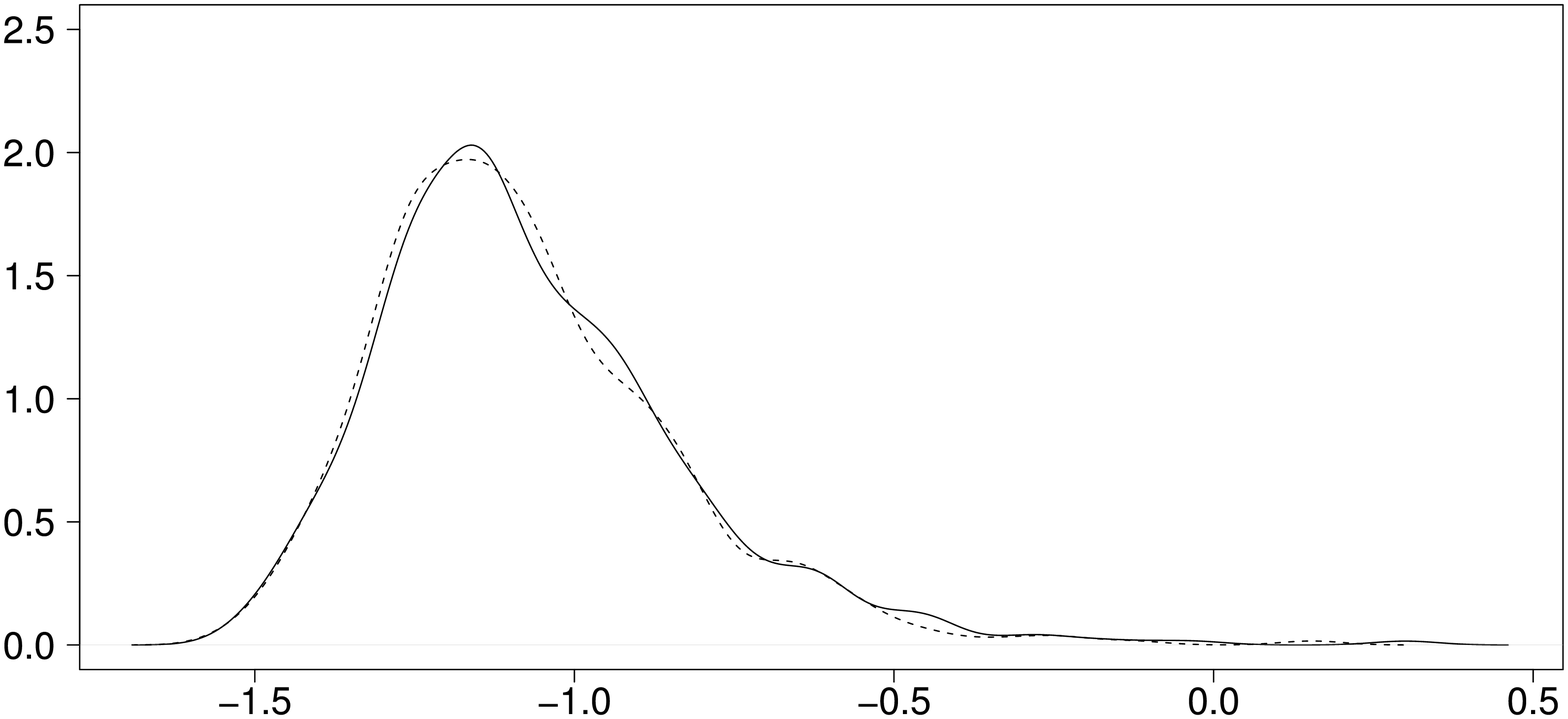}
&\includegraphics*[width=7cm,height=5cm]{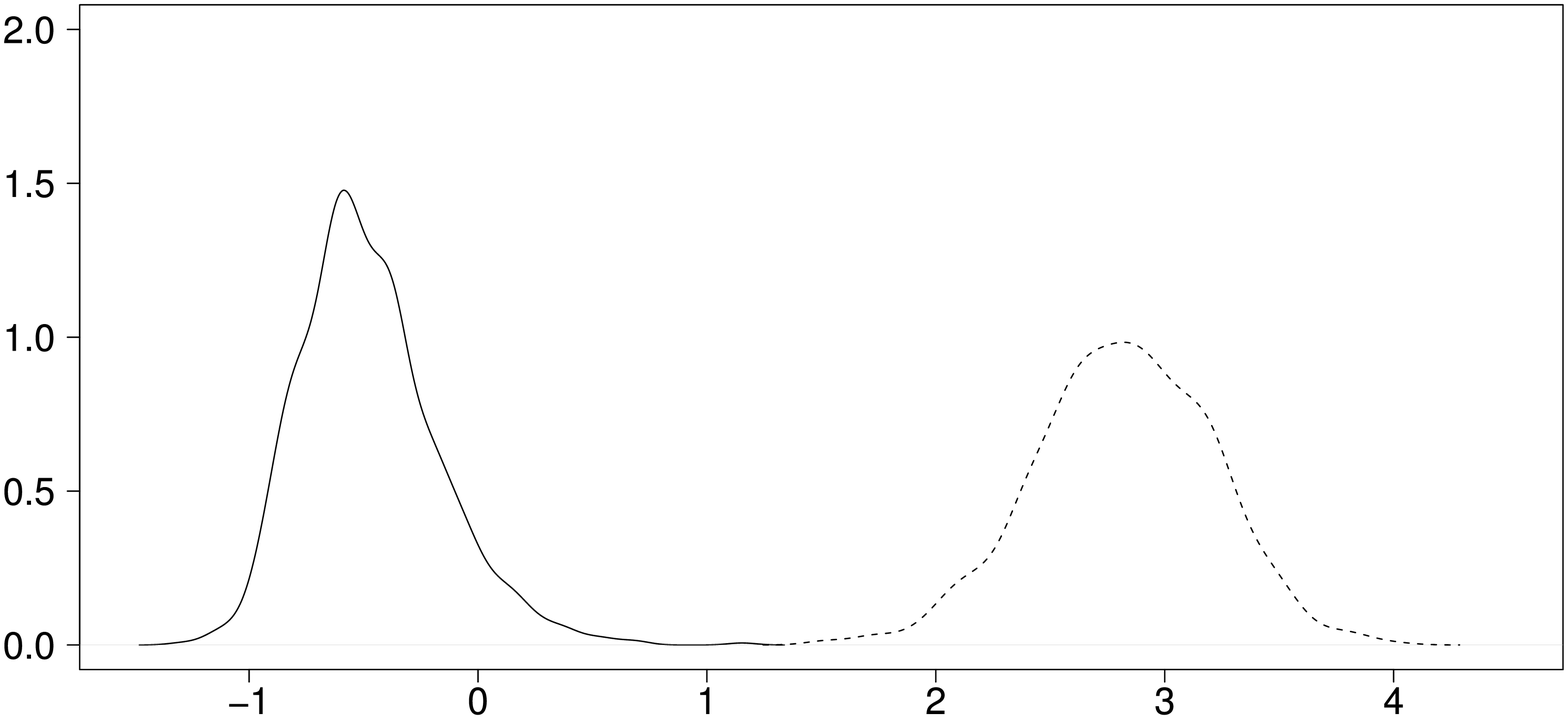}
\end{tabular}
\caption{\footnotesize{Empirical densities   of the
    quantities $n^{1-2d}(\QRC[][\Phi]{n}{\chunk{X}{1}{n}}-\sigma_Y)$
    (plain line) and
    $n^{1-2d}(\widehat{\sigma}_n-\sigma_Y)$ (dotted line) of the
    ARFIMA$(0,d,0)$ model with $d =0.45$, $n$=500, without outliers
    (left) and with outliers $p=10\%$
    and $\omega=10$ (right).}}
\label{fig:ARFIMA_0.45}
\end{figure}

Finally, the plots of the autocorrelations
 are displayed in the left and right parts of Figure \ref{fig:autocorr_simul}
 for models without and with outliers, respectively.  The figures also provide the population autocorrelation function
$\rho(h)=\Gamma(1-d)\Gamma(h+d)/(\Gamma(d)\Gamma(1+h-d))$ as a
function of the lag $h$ (\cite{hosking:1981}).

\begin{figure}[!ht]
\begin{tabular}{cc}
\includegraphics*[width=7cm,height=5cm]{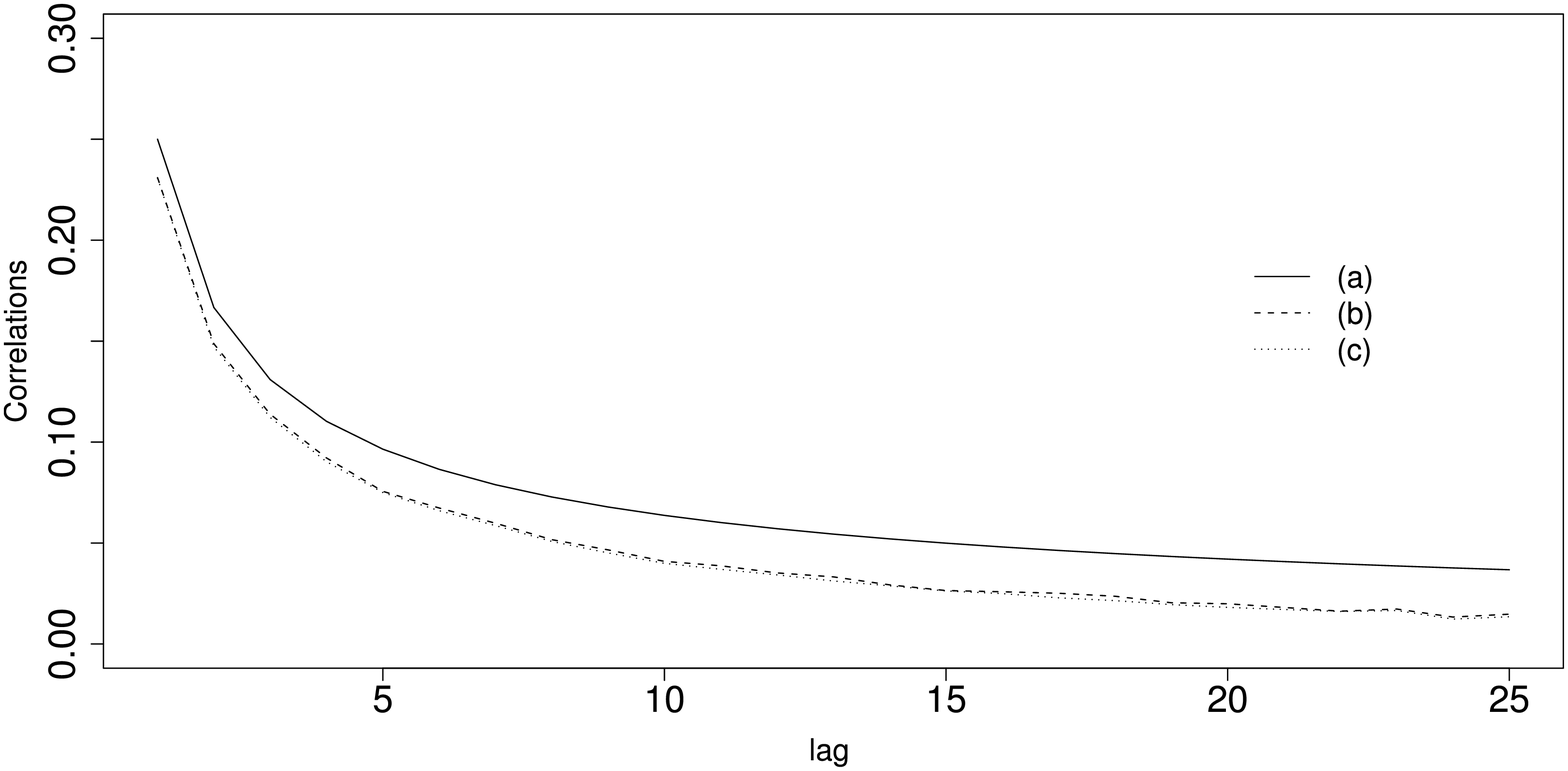}
&\includegraphics*[width=7cm,height=5cm]{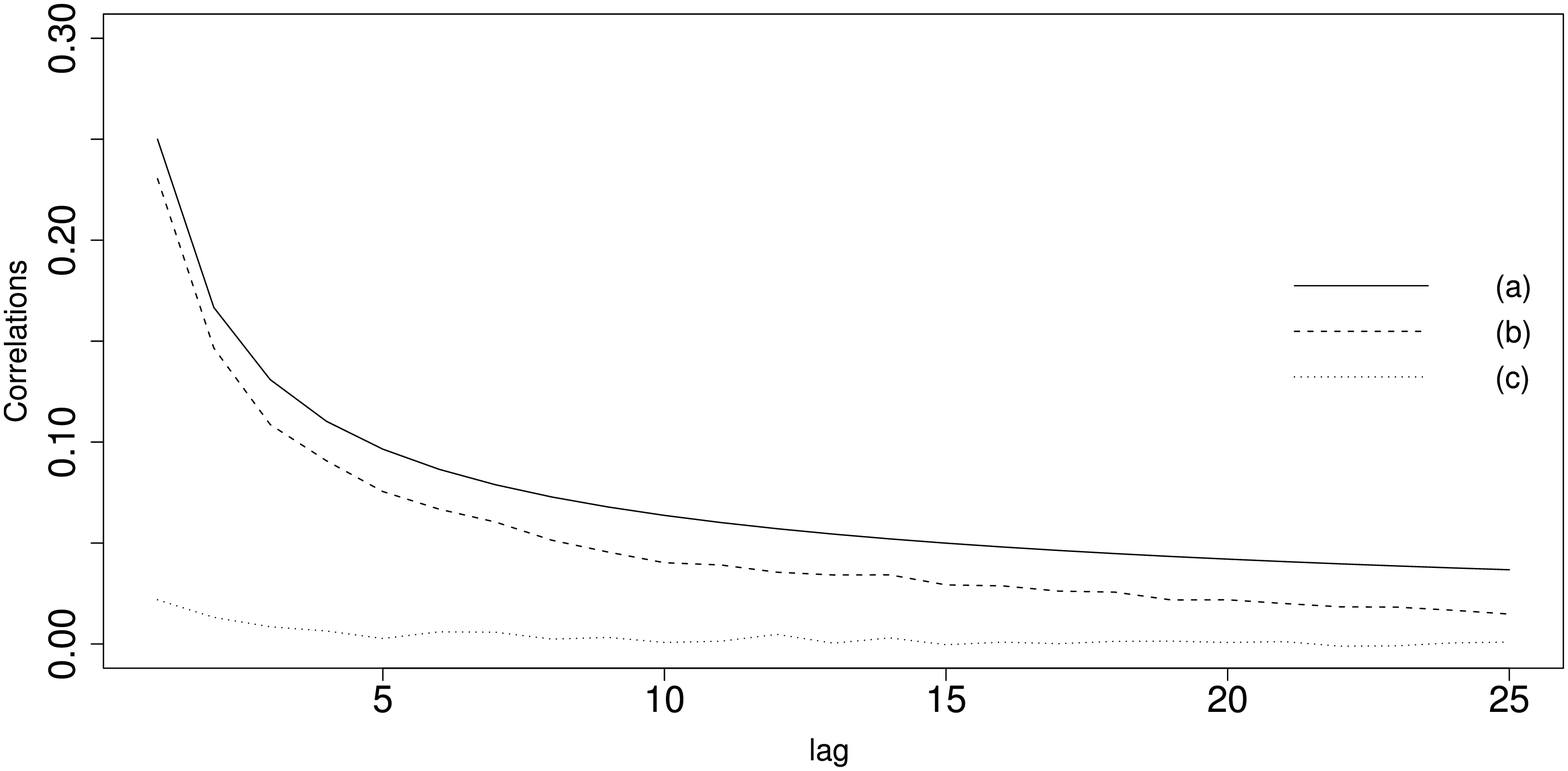}
\end{tabular}
\caption{\footnotesize{Sample correlations of the ARFIMA$(0,d,0)$
model with $d =0.2$, $n$=500 without outliers (left) and with outliers
with $p=10\%$
and $\omega =10$ (right).
(a) is the population correlation, and (b) and (c) are
the robust and the classical sample autocorrelations, respectively.}}
\label{fig:autocorr_simul}
\end{figure}

 In the absence of atypical observations (left part of Figure \ref{fig:autocorr_simul}), both
sample functions display a similar behavior. However, when the data
contains outliers (right part of Figure \ref{fig:autocorr_simul})
the classical sample autocorrelation is clearly distorted.

\subsection{Non-Gaussian observations}

We now examine the behavior of the autocovariance
estimator when it is applied to non Gaussian observations. 
To do so, we generated observations
$(X_t)_{1\leq t\leq n}$
as follows,
$$
X_t=\phi_1 X_{t-1}+Z_t\; ,
$$
where $\phi_1=0.9$, $\varepsilon=0.4$, $Z_t=W_t+\varepsilon Y_t^2$,  where $W_t$ and $Y_t$ are
independent random variables such that $(W_t)$ and $(Y_t)$ are i.i.d 
standard Gaussian random variables. An example of a realization of
$(Z_t)_{1\leq t\leq n}$ is given in the histogram of Figure
\ref{fig:non_gauss} with $n=500$. As we can see from this figure, the
presence of $\varepsilon$ in the definition of $Z_t$ produces an
asymmetry in the data. In the right part of this figure, we displayed 
the average of the robust autocovariance
$\widehat{\gamma}_Q(h,\chunk{X}{1}{n},\Phi)$ in (\ref{def:gamma_Q})
and the classical
autocovariance $\widehat{\gamma}(h)$ defined 
in Remark \ref{rem:autocov_short}, for $h=1,\dots,40$ and 1000 replications. From this
figure, we can see that the robust autocovariance estimator does not
seem to be affected by the skewness of the data.

\begin{figure}[!ht]
\begin{tabular}{cc}
\includegraphics*[width=7cm,height=8cm]{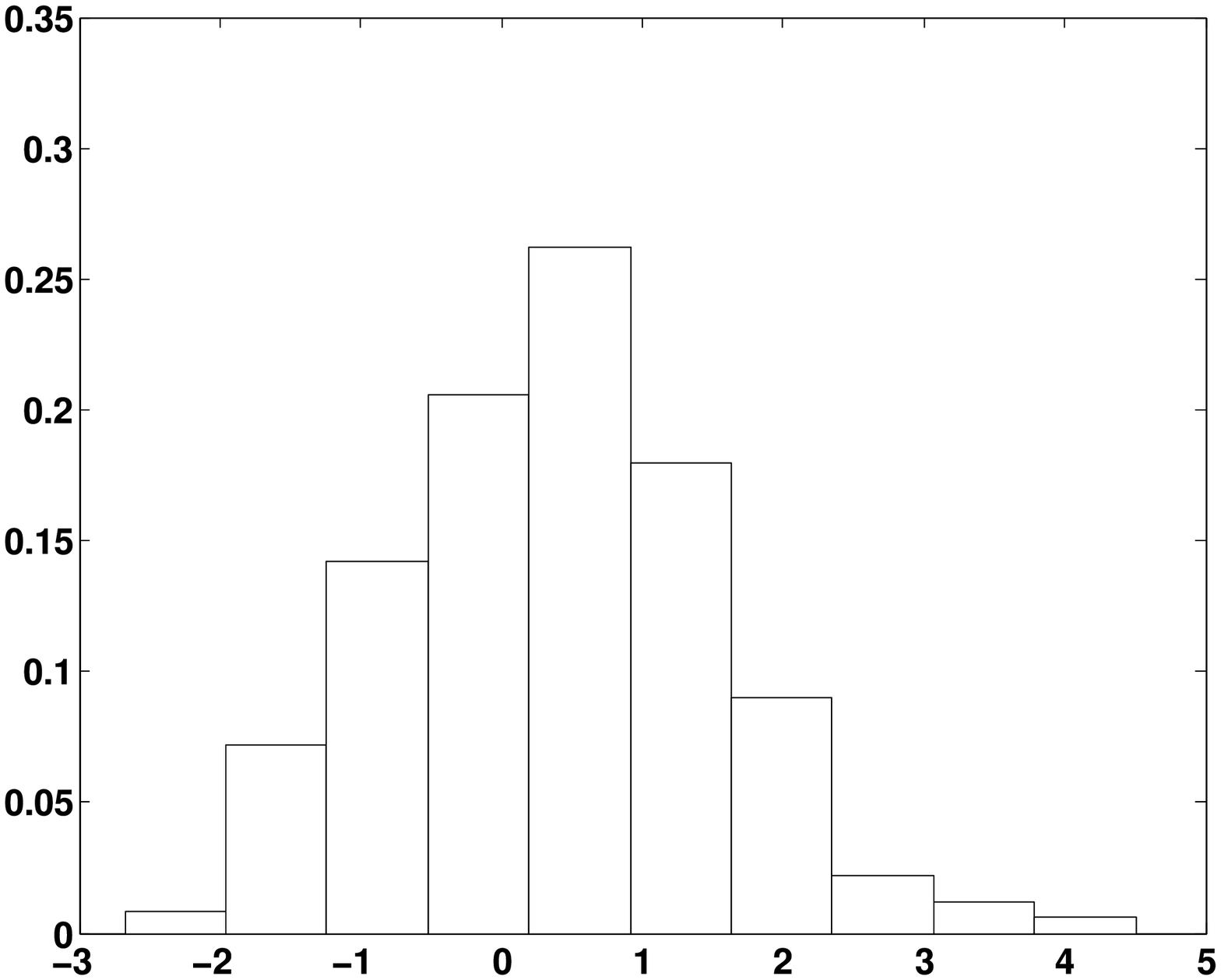}
&\includegraphics*[width=8cm, height=8cm]{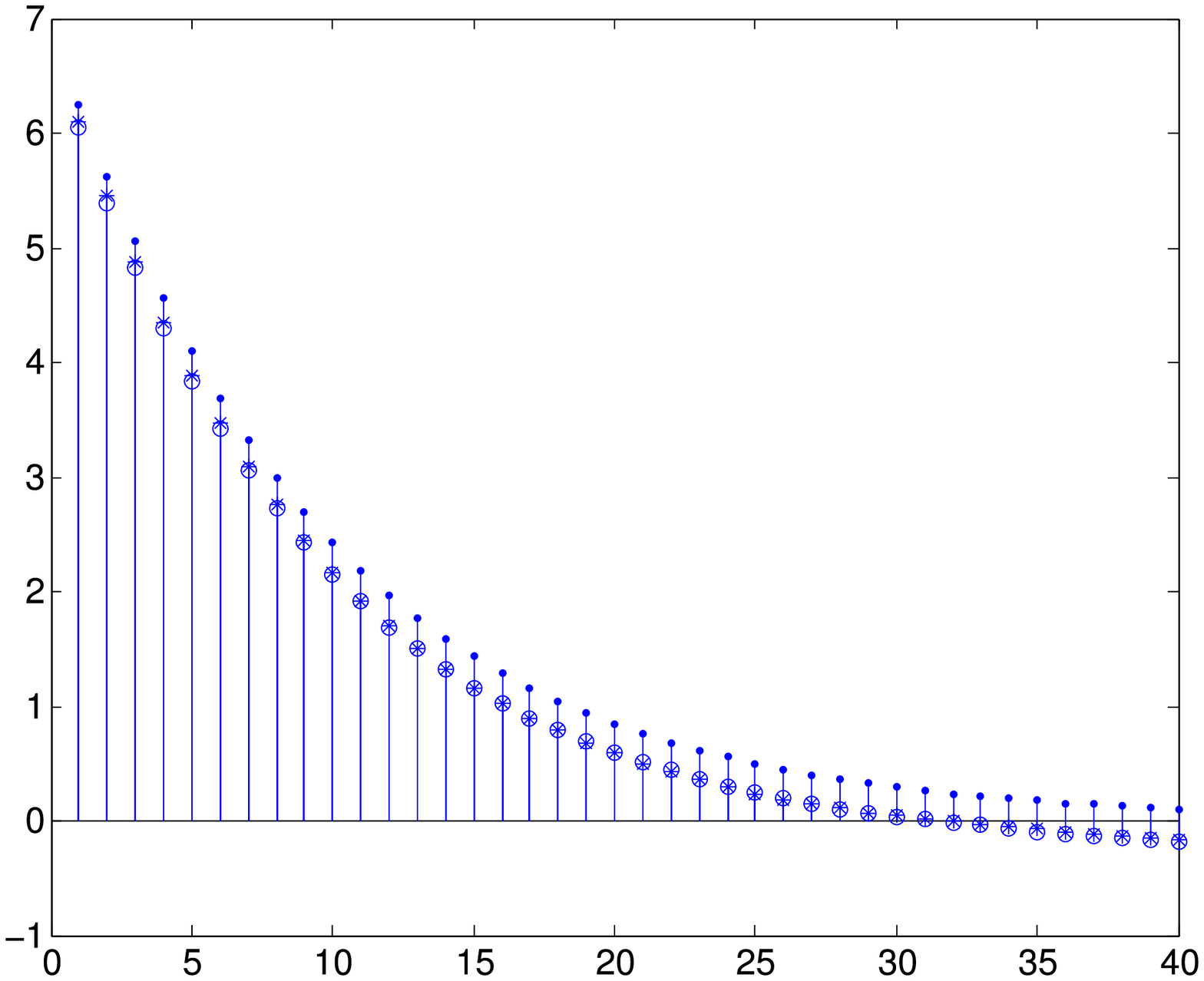}
\end{tabular}
\caption{\footnotesize{Left: Histogram of one realization of
    $(Z_t)_{1\leq t\leq 500}$. Right: Theoretical
    autocovariance('.'), robust autocovariance ('$\circ$'), classical
    autocovariance ('$\star$') for $h=1,\dots,40$.}}
\label{fig:non_gauss}
\end{figure}

\section{An application}
\label{sec:appli}
The Nile data is used here to illustrate some of the robust methodologies
discussed previously.
The Nile River data set is a well-known and interesting time series, which has
been extensively analyzed. This data is discussed in detail in the book by \cite{beran:1994}.
It is first introduced in Section~1.4 on p.~20, and is completely
tabulated on pp.~237--239. \cite{beran:1994} took this data from an earlier book by \cite[pp. 366--404]{toussoun:1925}.
The data consists of yearly minimal water levels of the Nile river measured at the
Roda gauge, near Cairo,  for the years 622--1284~AD (663 observations);
The units for the data as presented by \cite{beran:1994} are
centimeters (presumably above some fixed reference point).
The empirical mean and the standard
deviation of the data are equal to 1148 and 89.05, respectively.

The question has been raised as to whether the Nile time series
contains  outliers; see for example \cite{beran:1992},
\cite{robinson:1995}, \cite{chareka:matarise:turner:2006} and \cite{fajardo:reisen:cribari:2009}.
The test procedure developed by \cite{chareka:matarise:turner:2006},
suggests the presence of outliers at 646 AD ($p$-value 0.0308) and at
809 ($p$-value 0.0007). Another possible outlier is at 878 AD.
A plot of the time series where the observations which have been judged to be
outliers are marked, is shown in the left part of Figure
\ref{fig:appli}, and the right part of this figure displays the
histogram of the data. Although the theory developed in this paper is
related to Gaussian processes, we believe that the small asymmetry
of the data does not compromise the use of this series as an
illustration of our robust methodology. A way to avoid
this asymmetry is to consider the logarithm of the data. However,
this does not make a significant difference in the estimates.

\begin{figure}[!h]
\begin{tabular}{cc}
\includegraphics*[width=7.5cm]{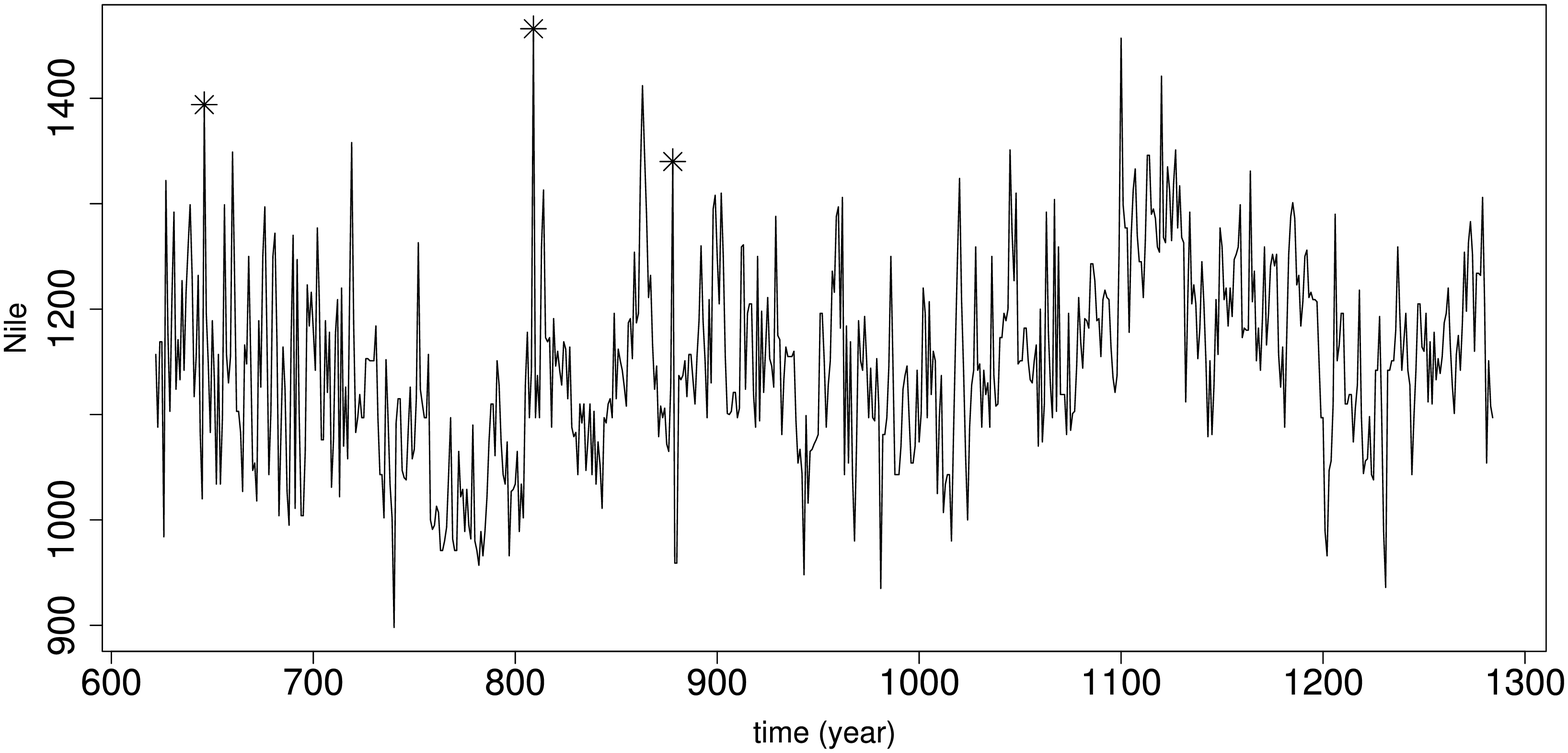}
&\includegraphics*[width=7.5cm]{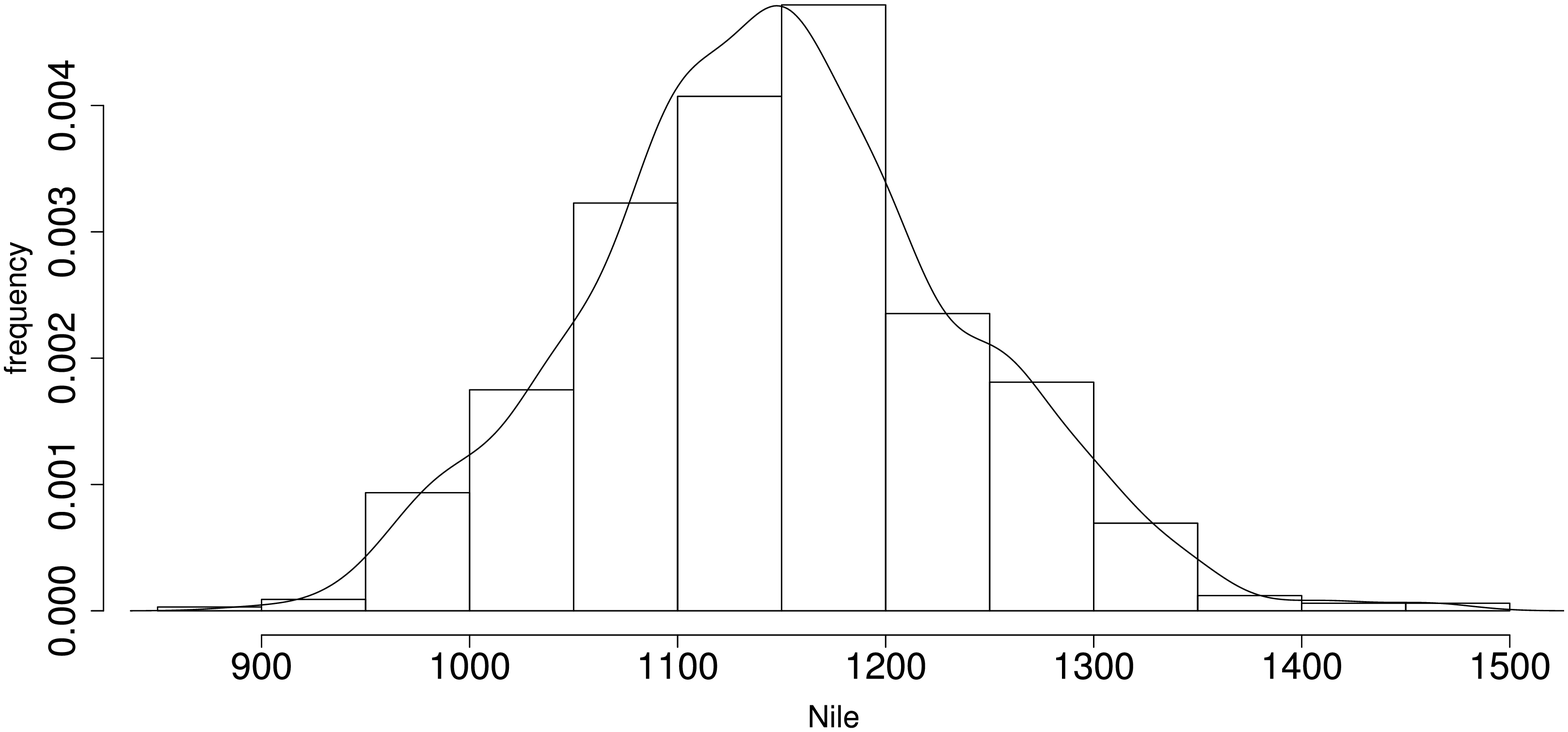}
\end{tabular}
\caption{Left: The Nile River data plot. Right: Histogram of the Nile River data.}
\label{fig:appli}
\end{figure}

The left part of Figure \ref{fig:appli_2} displays plots of the
classical and robust sample autocorrelation functions of the original data.
The autocorrelation values from the former are smaller
than those of the latter one. However, the difference between the  autocorrelations
may be not large enough to suggest the presence of outliers.
Thus, to better understand the influence of
outliers on the sample autocorrelation functions in practical
situations a new dataset with
artificial outliers was generated. 
We replaced the presumed outliers detected
by \cite{chareka:matarise:turner:2006} by the mean plus 5 or 10 standard deviations.
The sample autocorrelations (robust and classical ones) were again
calculated, see the right part of Figure \ref{fig:appli_2}.
As expected, the values of the robust autocorrelations remained
stable. However, the classical autocorrelations were significantly
affected by the increase of the size of the observation. This is
in accordance with the results presented in the simulation section.



\begin{figure}[!h]
 \begin{tabular}{cc}
\hspace{-3mm}
 \includegraphics*[width=7.75cm,height=6cm]{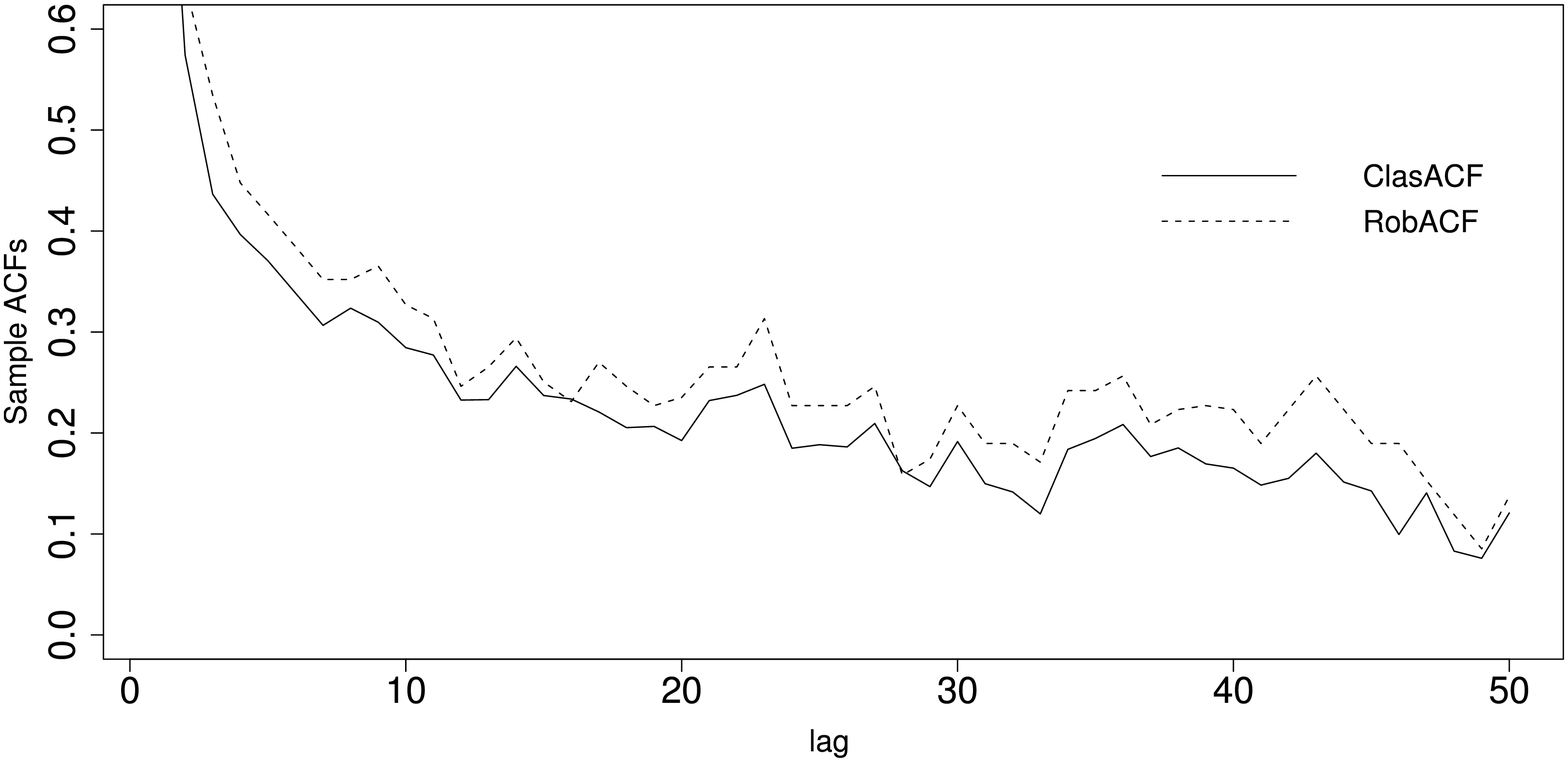}
 &\includegraphics*[width=7.75cm,height=6cm]{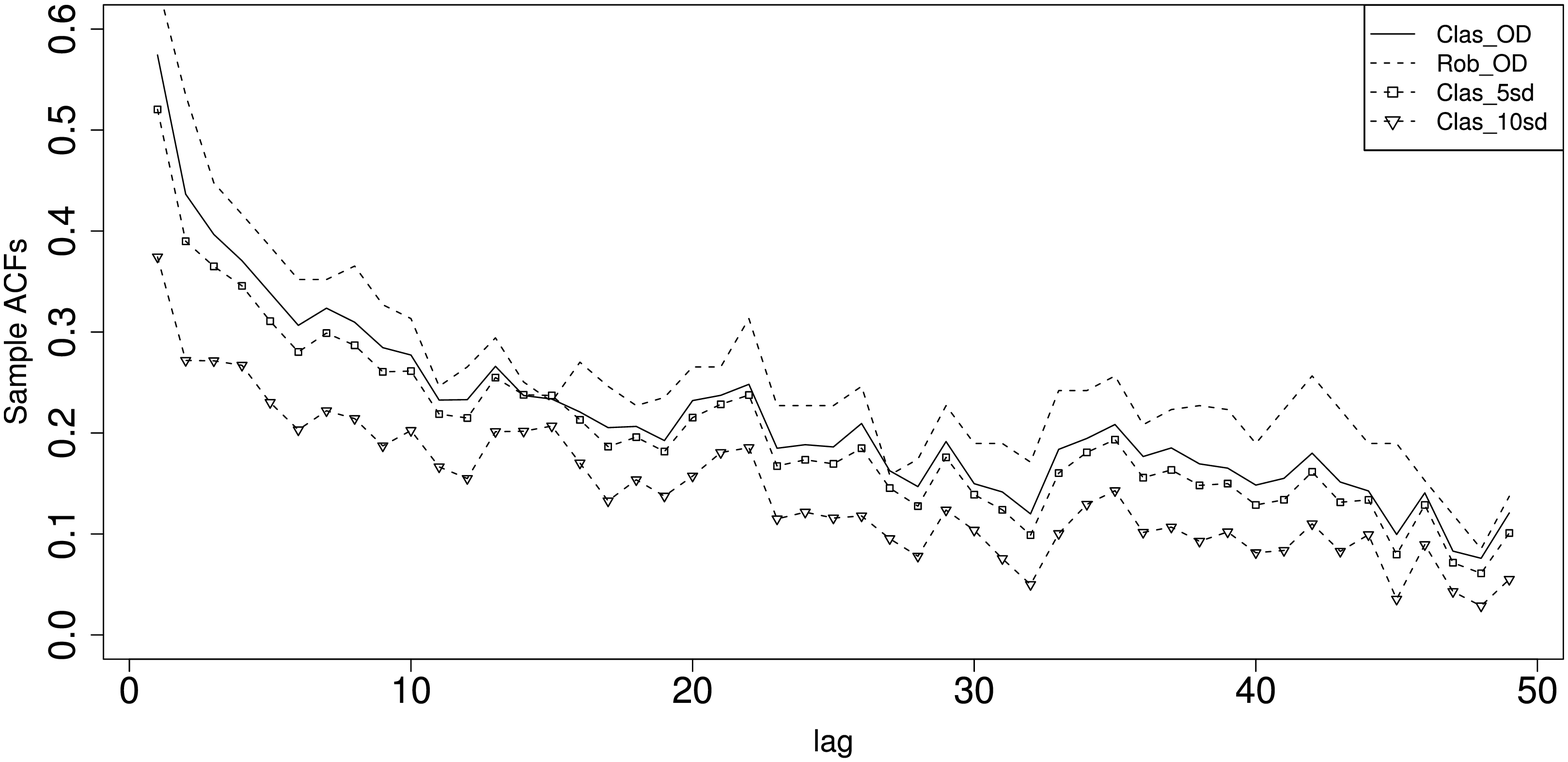}
 \end{tabular}
\caption{Left: Classical (plain line) and robust (dotted line) sample autocorrelation functions
  of the Nile River data. Right: Classical (plain line) and robust (dotted line) sample autocorrelation functions
  of the Nile River data and classical  sample autocorrelation
  functions of the Nile River data with artificial outliers at the
  times detected by \cite{chareka:matarise:turner:2006} (original
  data plus 5 standard deviations with `` $\Box$ "  and original
  data plus 10 standard deviations with `` $\triangledown$ ").}
\label{fig:appli_2}
\end{figure}

\section{Asymptotic behavior of $U$-processes}\label{s:U-process}

Consider the $U$-process $\{U^G_n(r), r\in I\}$ satisfying
\begin{equation}\label{def:U_n}
U^G_n(r)=\frac{1}{n(n-1)}\sum_{1\leq i\neq j\leq
  n}\1_{\{G(X_i,X_j)\leq r\}}\; , r\in I
\end{equation}
based on the class of kernels
\begin{equation}
\label{eq:definition-k}
k_G(x,y,r)= \1_{\{ G(x,y) \leq r\}} \eqsp.
\end{equation}
where $I$ is an interval included in $\rset$, $G$ is a symmetric function \textit{i.e.}
$G(x,y)=G(y,x)$ for all $x,y$ in $\rset$, and the process $(X_i)_{i\geq 1}$
satisfies Assumption (A\ref{assum:long-range}) with $\gamma(0)=1$.

The asymptotic properties of these $U$-processes have been studied in \cite{boistard:levy:2009}. 
They are based on the computation of the Hermite rank of the
class of functions $\{\1_{\{G(\cdot,\cdot) \leq r\}}-U^G(r), r\in I\}$ where
\begin{equation}\label{def:U_hoeff}
 U^G(r)=\int_{\rset^2} \1_{\{ G(x,y) \leq r \}} \phi(x) \phi(y) \rmd x \rmd y\;,\; \text{for all} \; r\in I\; .
\end{equation}
The Hermite rank of the class of functions $\{\1_{\{G(\cdot,\cdot)\leq
  r\}} -U(r), r\in I\}$ is obtained by 
expanding the function  $\1_{\{G(\cdot,\cdot)\leq r\}}$ in the basis of Hermite
polynomials with leading coefficient equal to 1:
\begin{equation}\label{eq:hermite_decomp}
\1_{\{ G(x,y) \leq r\}} =\sum_{p,q\geq 0}\frac{\alpha_{p,q}(r)}{p!q!} H_p(x) H_q(y)\;
, \textrm{ for all } x,y \textrm{ in } \rset\; ,
\end{equation}
where $\alpha_{p,q}(r)=\PE\left[\1_{\{ G(X,Y) \leq r\}}H_p(X) H_q(Y)\right]$, $X$ and $Y$ being
independent standard Gaussian random variables. The first few Hermite
polynomials are $H_0(x)=1$, $H_1(x)=x$, $H_2(x)=x^2-1$, $H_3(x)=x^3-3x$. Note that
$\alpha_{0,0}(r)$ is equal to $U^G(r)$ for all $r$, where $U^G(r)$ is defined in
(\ref{def:U_hoeff}).
The previous expansion can also be rewritten as
\begin{equation}\label{eq:hermite_rank}
\1_{\{ G(x,y) \leq r\}}-U^G(r)=\sum_{\stackrel{p,q\geq 0}{p+q\geq m}}\frac{\alpha_{p,q}(r)}{p!q!}
H_p(x) H_q(y)\; ,
\end{equation}
where
$m=m(r)$ is called the Hermite rank of the
function $\1_{\{G(\cdot,\cdot) \leq r\}}-U^G(r)$ when $r$ is fixed.

We state the results for family of kernels having Hermite rank equal
to $m=2$ 
(this is all we need here) and refer to \cite{boistard:levy:2009}
for other cases.

\begin{prop}\label{theo:U_n_D>1/2}
Let $I$ be a compact interval of $\rset$, let $k_G(\cdot,\cdot,r)$ be
defined in (\ref{eq:definition-k}), and let
\begin{eqnarray}\label{def:h_1}
  k_{G,1}(x,r)= \PE \left[k_G(x,Y,r) \right] \eqsp,\; x\in\rset\; ,\;
  r\in I\; ,
\end{eqnarray}
where $Y$ is a standard Gaussian variable.
Suppose that the Hermite rank of the class of functions $\{k_G(\cdot,\cdot,r)-U^G(r)\; ,
r\in I\}$ is $m=2$ and that Assumption (A\ref{assum:long-range}) is satisfied
with  $\gamma(0)=1$ and $1/2<D<1$. Assume that $k_G$ 
satisfies the following three conditions:
\begin{enumerate}
\item[(i)] There exists a positive constant $C$ such that for all $s$, $t$ in $I$, $u$, $v$ in $\rset$,
\begin{equation}\label{cond:contr_G}
\PE\left[|k_G(X+u,Y+v,s)-k_G(X+u,Y+v,t)|\right]\leq\; C|t-s|\; ,
\end{equation}
where $(X,Y)$ is a standard Gaussian random vector.
\item[(ii)]  There exists a positive constant $C$ such that for all 
$\ell\geq 1$ and $s,t$ in $I$, $u$, $v$ in $\rset$,
\begin{equation}\label{eq:G_lip_2}
\PE\left[|k_G(X_1+u,X_{1+\ell}+v,t)-k_G(X_1,X_{1+\ell},t)|\right]\leq C |u-v|\; .
\end{equation}
\begin{equation}\label{eq:G_lip}
\PE\left[|k_G(X_1,X_{1+\ell},s)-k_G(X_1,X_{1+\ell},t)|\right]\leq C |t-s|\; ,
\end{equation}
\item[(iii)] There exists a positive constant $C$ such that 
for all  $t$ in $I$, and $x$, $u$, $v$ in $\rset$,
\begin{align}
\label{cond:h_1}
&|k_{G,1}(x+u,t)-k_{G,1}(x+v,t)|\leq C|u-v| \eqsp, \\
\label{assum:h_1_Lipschitz}
&|k_{G,1}(x,s)-k_{G,1}(x,t)|\leq C|t-s| \eqsp.
\end{align}
\end{enumerate}
Then the $U$-process $\{\sqrt{n}(U^G_n(r)-U^G(r)), r\in I\}$ defined in
\eqref{def:U_n} and \eqref{def:U_hoeff} converges weakly in the space
of cadlag functions  on $I$, $\mathcal{D}(I)$,
 equipped with the
topology of uniform convergence to the zero mean Gaussian process $\{W_G(r),r\in I\}$ with
covariance structure given by
\begin{multline}\label{eq:cov_struct}
\PE[W_G(s)W_G(t)]=4\;\Cov(k_{G,1}(X_1,s),k_{G,1}(X_1,t))\\
+4\sum_{\ell\geq1}\ \Cov(k_{G,1}(X_1,s),k_{G,1}(X_{\ell+1},t))+\Cov(k_{G,1}(X_1,t),k_{G,1}(X_{\ell+1},s))\; .
\end{multline}
Moreover, for a fixed $r$ in $I$, as $n$ tends to infinity,
\begin{equation}\label{eq:dev_UG}
\sqrt{n}(U^G_n(r)-U^G(r))
=\frac{2}{\sqrt{n}}\sum_{i=1}^n
\left[k_{G,1}(X_i,r)-U^G(r)\right]+o_P(1)\; .
\end{equation} 
\end{prop}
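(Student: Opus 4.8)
The plan is to prove Proposition~\ref{theo:U_n_D>1/2} by combining a Hoeffding-type decomposition of the $U$-process with the Breuer--Major central limit theorem, exploiting the assumption that the Hermite rank $m=2$ together with $D>1/2$, which guarantees that $\sum_h |\gamma(h)|^2 < \infty$. The central idea is that when the Hermite rank equals $2$ but $D>1/2$, the degenerate (second-order) part of the $U$-statistic is asymptotically negligible, so the process behaves, to leading order, like a sum of i.i.d.-type projections, namely the linear Hoeffding term built from $k_{G,1}$. Concretely, I would first write, for each fixed $r$,
\[
U^G_n(r)-U^G(r) = \frac{2}{n}\sum_{i=1}^n\left[k_{G,1}(X_i,r)-U^G(r)\right] + R_n(r),
\]
where $R_n(r)$ collects the degenerate part of the Hoeffding decomposition of the $U$-statistic. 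The first step is therefore to establish the pointwise expansion \eqref{eq:dev_UG}, i.e.\ to show that $\sqrt{n}\,R_n(r)=o_P(1)$ for fixed $r$.

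\emph{Pointwise convergence.} To control $R_n(r)$, I would expand the degenerate kernel $k_G(x,y,r)-U^G(r)-(k_{G,1}(x,r)-U^G(r))-(k_{G,1}(y,r)-U^G(r))$ in the bivariate Hermite basis \eqref{eq:hermite_decomp}; by the Hermite-rank-$2$ assumption only terms with $p+q\geq 2$ and $\min(p,q)\geq 1$ survive in the degenerate part. Using the Gaussian product formula, $\PE\big[H_p(X_i)H_q(X_j)H_{p'}(X_k)H_{q'}(X_\ell)\big]$ factorizes into products of covariances $\gamma(\cdot)$, and bounding $\Var(R_n(r))$ reduces to sums of the form $n^{-2}\sum_{i,j,k,\ell}|\gamma|^{p}\cdots$. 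Since the lowest-order surviving cross term carries a factor $\gamma(i-j)\gamma(k-\ell)$ or $\gamma(i-k)\gamma(j-\ell)$ with total Hermite weight at least $2$ on each leg, the condition $\sum_h\gamma^2(h)<\infty$ (valid because $2D>1$) forces $\Var(\sqrt{n}\,R_n(r))\to 0$. Combined with the Breuer--Major theorem applied to the Hermite-rank-$1$ function $x\mapsto k_{G,1}(x,r)-U^G(r)$ (whose summability requirement $\sum_h|\gamma(h)|<\infty$ holds for $D>1/2$), this yields both \eqref{eq:dev_UG} and the asymptotic normality of the finite-dimensional distributions, with the covariance \eqref{eq:cov_struct} emerging from $4\,\Cov\big(\sum_i k_{G,1}(X_i,s),\sum_j k_{G,1}(X_j,t)\big)/n$.

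\emph{Tightness.} The finite-dimensional convergence is only half the statement; the weak convergence in $(\mathcal{D}(I),\supnorm{\cdot})$ requires tightness (asymptotic equicontinuity) of $\{\sqrt{n}(U^G_n(r)-U^G(r))\}_{r\in I}$. This is where conditions (i)--(iii) enter and where I expect the \textbf{main obstacle} to lie. The Lipschitz controls \eqref{cond:contr_G}, \eqref{eq:G_lip_2}, \eqref{eq:G_lip} on the kernel and \eqref{cond:h_1}, \eqref{assum:h_1_Lipschitz} on $k_{G,1}$ are precisely the moduli-of-continuity estimates needed to bound increments $\PE|U^G_n(t)-U^G_n(s)|^2$ (or a higher moment) by $C|t-s|$, which gives tightness via a chaining argument over the compact interval $I$. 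The genuine difficulty is handling the long-range dependence in these increment moments: one cannot simply invoke classical empirical-process tightness, since the dependence structure makes the variance of increments accumulate through the covariances $\gamma(h)$. The resolution is again that the Hermite-rank-$2$ structure makes the degenerate part contribute a tightness bound controlled by $\sum_h\gamma^2(h)<\infty$, while the linear part is tight by the Breuer--Major-type functional CLT for the Hermite-rank-$1$ projection under $\sum_h|\gamma(h)|<\infty$. Since this argument is carried out in detail in the companion paper, I would here assemble the pointwise expansion and the covariance computation in full, invoke conditions (i)--(iii) to produce the increment bound, and cite \cite{boistard:levy:2009} for the technical chaining estimate that upgrades finite-dimensional convergence to weak convergence in $(\mathcal{D}(I),\supnorm{\cdot})$.
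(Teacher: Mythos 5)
First, a point of reference: the paper itself does not prove Proposition~\ref{theo:U_n_D>1/2}; it is imported verbatim from the companion paper \cite{boistard:levy:2009}. So your sketch can only be measured against the argument actually used there, and your overall architecture — Hoeffding decomposition into a linear projection plus a degenerate remainder, a pointwise CLT for the projection, negligibility of $\sqrt{n}R_n(r)$ via Hermite/Gaussian-moment computations, and tightness over the compact interval $I$ driven by the Lipschitz conditions (i)--(iii) — is indeed that route.

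However, there is a genuine error at the heart of your pointwise step. You invoke the Breuer--Major theorem for $x\mapsto k_{G,1}(x,r)-U^G(r)$ as "a Hermite-rank-$1$ function whose summability requirement $\sum_h|\gamma(h)|<\infty$ holds for $D>1/2$". Both halves of this are false. Under (A\ref{assum:long-range}) with $1/2<D<1$, $\gamma(h)=h^{-D}L(h)$ is \emph{not} absolutely summable (that would require $D>1$); only $\sum_h\gamma(h)^2<\infty$ holds. Moreover, if $k_{G,1}(\cdot,r)-U^G(r)$ genuinely had Hermite rank $1$, the proposition itself would be false: the variance of $\sum_{i\leq n}\left[k_{G,1}(X_i,r)-U^G(r)\right]$ would grow like $n^{2-D}L(n)\gg n$, so the rate could not be $\sqrt n$, and the series in \eqref{eq:cov_struct} would diverge, since $\Cov\left(k_{G,1}(X_1,s),k_{G,1}(X_{\ell+1},t)\right)=\sum_{p\geq\tau}\alpha_{p,0}(s)\alpha_{p,0}(t)\gamma(\ell)^p/p!$ has leading term proportional to $\gamma(\ell)$ when $\tau=1$. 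The missing idea is that the rank-$2$ assumption on the class $\{k_G(\cdot,\cdot,r)-U^G(r)\}$ forces $\alpha_{1,0}(r)=\alpha_{0,1}(r)=0$ for all $r$; since $k_{G,1}(x,r)-U^G(r)=\sum_{p\geq1}\alpha_{p,0}(r)H_p(x)/p!$, the projection itself inherits Hermite rank at least $2$, and Breuer--Major then applies with $\tau=2$, whose hypothesis is exactly $\sum_h|\gamma(h)|^2<\infty$, i.e.\ exactly $D>1/2$; this is also what makes the covariance series in \eqref{eq:cov_struct} absolutely convergent. Your treatment of the degenerate remainder is in the right spirit, but note two refinements: the relevant bound is $n\Var(R_n(r))=O(n^{1-2D}L^2(n))$, coming from $\sum_{i,j\leq n}|\gamma(i-j)|=O(n^{2-D}L(n))$ entering squared (the sums diverge; only their growth rate is slow enough), and, unlike the i.i.d.\ case, $R_n(r)$ has a nonzero mean $\PE[R_n(r)]=\sum_{p\geq1}\alpha_{p,p}(r)\,(n(n-1))^{-1}\sum_{i\neq j}\gamma(i-j)^p/p!=O(n^{-D}L(n))$, which is $o(n^{-1/2})$ again precisely because $D>1/2$ and must be checked separately. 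With the rank of $k_{G,1}$ corrected and the bias term handled, your sketch (delegating the chaining/tightness estimate to \cite{boistard:levy:2009}, as the paper itself does) becomes sound; as written, its central limit step rests on a false premise.
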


We now consider the case where $D < 1/2$. In this case, the normalization depends, as expected, on $D$ and the slowly
varying function $L$ and the limiting distribution is no longer a Gaussian process.
Let $(Z_{1,D}(t))_{t \in \rset_+}$ denote the standard fractional Brownian motion (fBm)
and $(Z_{2,D}(t))_{t \in \rset_+}$ the Rosenblatt process. They are
defined through multiple Wiener-It\^o  integrals and given by
\begin{equation}\label{eq:fBm}
Z_{1,D}(t)=\int_{\rset}\left[\int_0^t (u-x)_+^{-(D+1)/2} \rmd u\right] \rmd B(x),\quad 0<D<1\; ,
\end{equation}
and
\begin{equation}\label{eq:rosenblatt}
Z_{2,D}(t)=\int'_{\rset^2}\left[\int_0^t (u-x)_+^{-(D+1)/2}
(u-y)_+^{-(D+1)/2} du\right] \rmd B(x) \rmd B(y),\; 0<D<1/2\; ,
\end{equation}
where $B$ is the standard Brownian motion, see \cite{fox:taqqu:1987}.
The symbol $\int'$ means that the domain of integration excludes the
diagonal.
Introduce also the Beta function
\begin{equation}\label{eq:betaf}
\B(\alpha,\beta)=\int_0^{\infty} y^{\alpha-1}(1+y)^{-\alpha-\beta}\rmd
y = \frac{\Gamma(\alpha)\Gamma(\beta)}{\Gamma(\alpha+\beta)}\; , 
\quad\alpha >0,\; \beta>0\; .
\end{equation}

\begin{prop}\label{theo:D<1/2}
Let $I$ be a compact interval of $\rset$
Suppose that the Hermite rank of the class of functions $\{k_G(\cdot,\cdot,r)-U^G(r)\; ,
r\in I\}$ is $m=2$ and that Assumption (A\ref{assum:long-range}) is satisfied
with $\gamma(0)=1$ and $D<1/2$. Assume the following:
\begin{enumerate}
\item [(i)] There exists a positive constant C such that,
for all $\ell\geq 1$ and for all $s,t$ in $I$,
\begin{equation}\label{eq:lip_h}
\PE[|k_G(X_1,X_{1+\ell},s)-k_G(X_1,X_{1+\ell},t)|]\leq C |t-s|\; .
\end{equation}
\item [(ii)] $U^G$ is a Lipschitz function
\item [(iii)] The function $\widetilde{\Lambda}$ defined, for all $s$ in $I$, by
\begin{equation}\label{eq:lambda_tilde}
\widetilde{\Lambda}(s)=\PE[k_G(X,Y,s)(|X|+|XY|+|X^2-1|)]\; ,
\end{equation}
where $X$ and $Y$ are independent standard Gaussian random variables,
is also a Lipschitz function.
\end{enumerate}

Then, the $U$-process $\{U_n^G(r)-U^G(r), r\in I\}$
defined in (\ref{def:U_n}) and (\ref{def:U_hoeff}) has
the following asymptotic properties:
$$
\left\{n^{D} L^{-1}(n) \left(U_n^G(r)-U^G(r)\right); r\in I\right\}
$$
converges weakly in the space of cadlag functions $\mathcal{D}(I)$, equipped with the
topology of uniform convergence, to
$$
\{\beta(D)^{-1}\left[\alpha_{1,1}(r)Z_{1,D}(1)^2+\alpha_{2,0}(r)Z_{2,D}(1)\right]; r\in I\}\; ,
$$
where the fractional Brownian motion $Z_{1,D}(\cdot)$ and the
Rosenblatt process $Z_{2,D}(\cdot)$ are defined in (\ref{eq:fBm}) and
(\ref{eq:rosenblatt}) respectively and where $\beta(D)=\emph{\B}((1-D)/2,D)$, $\emph{\B}$ denoting the Beta
function, defined in (\ref{eq:betaf}).
\end{prop}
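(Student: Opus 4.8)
The strategy is to expand the kernel $k_G(\cdot,\cdot,r)$ in Hermite polynomials as in \eqref{eq:hermite_decomp}--\eqref{eq:hermite_rank} and to split the centered $U$-process into the part carried by the total-degree-two terms and a remainder gathering all terms with $p+q\geq 3$. Since $G$ is symmetric, $\alpha_{p,q}(r)=\alpha_{q,p}(r)$, and the rank-two assumption forces $\alpha_{1,0}(r)=\alpha_{0,1}(r)=0$; hence the degree-two part is carried only by $H_2\otimes H_0$ (together with $H_0\otimes H_2$) and by $H_1\otimes H_1$. The crucial structural point is that the announced limit is, for each $r$, merely a deterministic linear combination, with coefficients $\beta(D)^{-1}\alpha_{1,1}(r)$ and $\beta(D)^{-1}\alpha_{2,0}(r)$, of the \emph{single} random pair $(Z_{1,D}(1)^2,Z_{2,D}(1))$. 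Thus the randomness of the limit is finite-dimensional and all the $r$-dependence sits in the two deterministic coefficient functions; consequently, once pointwise convergence and a uniform-in-$r$ control of the remainder are in hand, the functional statement in $(\mathcal{D}(I),\supnorm{\cdot})$ will follow, the sample paths of the limit lying in $\mathcal{D}(I)$ as soon as $r\mapsto\alpha_{1,1}(r)$ and $r\mapsto\alpha_{2,0}(r)$ are regular, which is exactly what assumption (iii) provides.

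For the degree-two part, using $\sum_{i\neq j}H_2(X_i)=(n-1)\sum_iH_2(X_i)$ and $\sum_{i\neq j}X_iX_j=(\sum_iX_i)^2-\sum_iX_i^2$, one obtains the exact identity
\begin{multline*}
n^DL^{-1}(n)(U_n^G(r)-U^G(r))=\alpha_{2,0}(r)\frac{n^D}{L(n)}\frac{1}{n}\sum_{i=1}^nH_2(X_i)\\
+\alpha_{1,1}(r)\frac{n^D}{L(n)}\frac{(\sum_{i=1}^nX_i)^2}{n(n-1)}+\rho_n(r)\eqsp,
\end{multline*}
where $\rho_n(r)$ collects $-\alpha_{1,1}(r)(n(n-1))^{-1}\sum_iX_i^2$ and the remainder $U$-statistic of degree $\geq 3$. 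By Taqqu's non-central limit theorem, $d_n^{-1}\sum_{i=1}^nX_i$ converges to a constant multiple of $Z_{1,D}(1)$ with $d_n$ as in \eqref{e:dn}, while $(n^{1-D}L(n))^{-1}\sum_{i=1}^nH_2(X_i)$ converges to a constant multiple of $Z_{2,D}(1)$; since both sums are functionals of the \emph{same} Gaussian sequence and their limits are Wiener--It\^o integrals of order one and two against the \emph{same} Brownian motion in \eqref{eq:fBm}--\eqref{eq:rosenblatt}, the convergence is joint. Inserting $d_n^2=\alpha(D)n^{2-D}L(n)$ shows that the two displayed terms are both of exact order $n^{-D}L(n)$, and bookkeeping of the normalizing constants, where $\beta(D)^{-1}$ arises from matching $d_n$ and $n^{1-D}L(n)$ with the normalizations built into \eqref{eq:fBm}--\eqref{eq:rosenblatt}, yields for each fixed $r$ the limit $\beta(D)^{-1}[\alpha_{1,1}(r)Z_{1,D}(1)^2+\alpha_{2,0}(r)Z_{2,D}(1)]$. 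Applying this jointly at finitely many points gives finite-dimensional convergence.

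It then remains to show $\sup_{r\in I}|\rho_n(r)|=o_P(1)$. The piece $\alpha_{1,1}(r)(n(n-1))^{-1}\sum_iX_i^2$ is $O_P(n^{-1})$ times $n^DL^{-1}(n)$, hence uniformly negligible since $\alpha_{1,1}$ is bounded on $I$. For the degree-$m$ remainder ($m\geq 3$) I would bound its $L^2$ norm via the diagram (Hermite orthogonality) formula by sums of products $\gamma(\cdot)^p\gamma(\cdot)^q$ with $p+q=m$: either $mD<1$, and the relevant sums grow like $n^{1-mD/2}L^{m/2}(n)$, strictly slower than $n^{1-D}L(n)$ because $m\geq 3$, or $mD>1$, and a central-limit rate $n^{1/2}$ applies; in both cases multiplication by $n^DL^{-1}(n)$ produces a power $n^{-(m-2)D/2}$ or $n^{D-1/2}$ that tends to zero for $D<1/2$, and summing the geometrically decaying contributions over $m\geq 3$ shows the remainder vanishes at the leading scale. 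Uniformity in $r$ is where assumptions (i)--(iii) enter: the increment bound \eqref{eq:lip_h} together with the Lipschitz continuity of $U^G$ and of $\widetilde\Lambda$ yields, through $\alpha_{1,1}(s)-\alpha_{1,1}(t)=\PE[(k_G(X,Y,s)-k_G(X,Y,t))XY]$ and $\alpha_{2,0}(s)-\alpha_{2,0}(t)=\PE[(k_G(X,Y,s)-k_G(X,Y,t))(X^2-1)]$, a modulus-of-continuity control that upgrades the pointwise bounds to a maximal inequality over $r\in I$.

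The main obstacle is precisely this uniform-in-$r$ remainder estimate: one must bound, uniformly in $n$ and over the whole index set $I$, a degenerate long-memory $U$-process formed from Hermite terms of degree $\geq 3$, and show it is $o_P(n^{-D}L(n))$ after normalization. This requires combining the diagram-formula variance bounds with the Lipschitz hypotheses \eqref{eq:lip_h}, (ii) and the regularity of $\widetilde\Lambda$ in \eqref{eq:lambda_tilde} into a chaining or bracketing argument; controlling simultaneously the interplay between the long-range dependence of the $\gamma(\cdot)^p\gamma(\cdot)^q$ terms and the uniformity over $r$ is the technically delicate heart of the proof.
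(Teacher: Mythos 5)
First, a point of context: this paper does not contain a proof of Proposition \ref{theo:D<1/2} at all; the statement is imported from the companion paper \cite{boistard:levy:2009}, so your proposal has to stand on its own as a complete argument rather than be matched against an in-paper proof. Your skeleton is the natural one and, as far as it goes, correct: the exact total-degree-two decomposition is right (including $\sum_{i\neq j}H_2(X_i)=(n-1)\sum_i H_2(X_i)$, the square trick for $\sum_{i\neq j}X_iX_j$, and the vanishing of $\alpha_{1,0}=\alpha_{0,1}$ forced by the rank-two hypothesis), and your constant bookkeeping actually checks out: since $\Var(Z_{1,D}(1))=\beta(D)\alpha(D)$ and $\Var(Z_{2,D}(1))=2\beta(D)^2(1-2D)^{-1}(1-D)^{-1}$, the non-central limit theorems give $n^{-(1-D/2)}L(n)^{-1/2}\sum_i X_i \cd \beta(D)^{-1/2}Z_{1,D}(1)$ and $n^{-(1-D)}L(n)^{-1}\sum_i H_2(X_i)\cd \beta(D)^{-1}Z_{2,D}(1)$, so both degree-two terms do produce the common factor $\beta(D)^{-1}$ in the statement. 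Your reading of how (i)--(iii) enter is also correct: monotonicity of $r\mapsto k_G(\cdot,\cdot,r)$ plus the Lipschitz property of $\widetilde\Lambda$ yields Lipschitz continuity of $\alpha_{2,0}$ and $\alpha_{1,1}$.

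The genuine gap is the one you flag yourself: the claim $\sup_{r\in I}|\rho_n(r)|=o_P\bigl(n^{-D}L(n)\bigr)$ for the degree $\geq 3$ remainder is asserted, not proved, and the order-of-magnitude heuristics you give for it do not suffice. The rates ``$n^{1-mD/2}L^{m/2}(n)$ if $mD<1$, else $n^{1/2}$'' are the Taqqu/Breuer--Major rates for \emph{single} partial sums $\sum_i f(X_i)$ of a rank-$m$ function; your remainder is the doubly-indexed sum $\frac{1}{n(n-1)}\sum_{i\neq j}\sum_{p+q\geq 3}\frac{\alpha_{p,q}(r)}{p!q!}H_p(X_i)H_q(X_j)$, whose variance involves mixed moments $\PE[H_p(X_i)H_q(X_j)H_{p'}(X_k)H_{q'}(X_l)]$ over four indices and all pairs of Hermite degrees; controlling it requires the diagram formula together with a summability argument over the coefficients (mere square-summability $\sum_{p,q}\alpha_{p,q}^2/(p!q!)<\infty$ does not deliver the ``geometrically decaying contributions over $m\geq3$'' you invoke), and under long-range dependence this count is exactly where the difficulty sits. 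Similarly, ``chaining or bracketing'' for uniformity in $r$ is named but no maximal inequality is constructed; the available modulus from \eqref{eq:lip_h} is an $L^1$ modulus of the kernel increments, and turning that into a uniform bound on a degenerate long-memory $U$-process is the substantive content of the companion paper's proof, not a routine step. A lesser issue: the joint convergence of $\bigl(\sum_i X_i,\sum_i H_2(X_i)\bigr)$ to multiples of $\bigl(Z_{1,D}(1),Z_{2,D}(1)\bigr)$ driven by the \emph{same} Brownian motion should be justified (e.g.\ via the spectral-representation argument of Fox and Taqqu) rather than inferred from the two sums being functionals of the same sequence. In short: right skeleton and correct degree-two analysis, but the proof is missing precisely where it is hardest.
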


Propositions \ref{theo:U_n_D>1/2} and \ref{theo:D<1/2} will be applied
to the $U$-process $U_n(r)$ in (\ref{eq:definition-U-process}) with
\begin{equation}\label{e:U}
U(r)=\int_{\mathbb{R}^2}\1_{\{\|x-y|\leq r\}} \rmd F(x)\rmd
F(y)=T_1(F)[r]\; ,
\end{equation}
with $T_1$ given in (\ref{eq:definition-T1}). 
By Lemma \ref{lem:hermite_rank_T1(F_n)}, 
the Hermite rank of the class of functions $\{\1_{|x-y| \leq r} -U(r),
x,y \in \rset, r \in I\}$
is equal to 2 where $I=[r_0-\eta,r_0+\eta]$ for some positive
$\eta$ defined in Lemma \ref{lem:hermite_rank_T1(F_n)} and where 
$r_0=1/c(\Phi)$ (see (\ref{eq:c(Phi)})) is such that
\begin{equation}\label{eq:r_0}
T_1(\Phi)[r_0]=T_1(\Phi)[1/c(\Phi)]=1/4\; .
\end{equation}

\section{Proofs}\label{sec:proofs}

\begin{proof}[Proof of Lemma \ref{prop:asym_expansion}]
Denote by $F$ the c.d.f. $\Phi_{\mu,\sigma}$ of $X_1$.
Since $a_n(F_n-F)$ converges in distribution
in the space of cadlag functions equipped with the topology of uniform convergence,
the asymptotic expansion \eqref{expansion:short-range} can be deduced from
the functional Delta method stated \textit{e.g.}
in Theorem 20.8 of \cite{vdv:2000}. To show this, we
have to prove
that $T_0=T_1 \circ T_2$ is Hadamard differentiable, where $T_1$ and $T_2$ are
defined in \eqref{eq:definition-T1} and \eqref{eq:definition-T2} respectively and that the corresponding
Hadamard differential is defined and continuous on the whole space of cadlag
functions. For a definition of Hadamard differentiability, we refer to
\cite[Chapter 20]{vdv:2000}.

We prove first that the Hadamard differentiability of the functional 
$T_1$ defined in \eqref{eq:definition-T1}. Let $(g_t)$
be a sequence of cadlag functions with bounded
variations such that $\supnorm{g_t-g}\to 0$,  as $t\to 0$, where $g$ is a cadlag function.
For any non negative $r$, we consider
\begin{multline*}
t^{-1}\left\{T_1(F+tg_t)[r]-T_1(F)[r]\right\}\\
=2\int_{\rset}\int_{\rset}\1_{\{\vert x-y\vert\leq r\}}\rmd F(x)\rmd g_t(y)
+t\int_{\rset} \int_{\rset} \1_{\{\vert x-y\vert\leq r\}}\rmd g_t(x)\rmd g_t(y)\; .
\end{multline*}
Since
\begin{multline*}
\left|\int_{\rset}\int_{\rset}\1_{\{|x-y|\leq r\}}\rmd F(x)\rmd g_t(y)
-\int_{\rset}\int_{\rset}\1_{\{|x-y|\leq r\}}\rmd F(x)\rmd g(y)\right|\\
=\left|\int_{\rset} \left(g_t(x+r)-g(x+r) \right)\rmd F(x)
-\int_{\rset} \left(g_t(x-r)-g(x-r) \right)\rmd F(x)\right|
\leq 2 \supnorm{g_t-g}\to 0\; ,
\end{multline*}
as $t$ tends to zero, the Hadamard differential of $T_1$ at $g$ is given by:
$$
(DT_1(F).g)(r)
= 2\int_{\rset}\int_{\rset}\1_{\{|x-y|\leq r\}}\ \rmd F(x) \rmd g(y)
=2\int_{\rset} \{g(x+r)-g(x-r)\} \rmd F(x)\; .
$$
By Lemma 21.3 in \cite{vdv:2000}, $T_2$ is Hadamard differentiable. Finally, using
the Chain rule (Theorem 20.9 in \cite{vdv:2000}), we obtain the Hadamard differentiability
of $T_0$ with the following Hadamard differential:
\begin{equation}\label{e:DT}
DT_0(F).g=-\frac{(DT_1(F).g)(T_0(F))}{(T_1(F))'[T_0(F)]}
=-\frac{2\int_{\rset} \{g(x+T_0(F))-g(x-T_0(F))\} \rmd F(x)}{(T_1(F))'[T_0(F)]}\; .
\end{equation}
In view of the last expression, $DT_0(F)$ is a continuous
function of $g$ and is defined on the whole space of cadlag functions.
Thus,  by Theorem 20.8 of \cite{vdv:2000}, we obtain:
\begin{equation}\label{e:QFnF}
a_n(\QRC[][\Phi]{n}{\chunk{X}{1}{n}}-Q(F))= c(\Phi)\; DT_0(F).\{a_n(F_n-F)\}+o_P(1)\; ,
\end{equation}
where $c(\Phi)$ is the constant defined in (\ref{eq:c(Phi)}).
By (\ref{eq:definition-T1}), 
$T_1(F)[r]=\int_{\rset} [F(x+r)-F(x-r)]\rmd F(x)$ and since
$F(\cdot)=\Phi_{\mu,\sigma}(\cdot)=\Phi((\cdot-\mu)/\sigma)$, we get
$$
(T_1(F))'[r]=\frac{2}{\sigma}\int_{\rset}\Phi\left(y+\frac{r}{\sigma}\right)
\Phi(y)\rmd y\; .
$$
Since $\sigma=Q(\Phi_{\mu,\sigma})=c(\Phi)T_0(F)$ by (\ref{eq:def:Q}),
we get
\begin{equation}\label{e:T1F}
(T_1(F))'[T_0(F)]=2\sigma^{-1}\int \phi(y)\phi(y+1/c(\Phi))\rmd y\; .
\end{equation}
Applying (\ref{e:DT}) with $T_0(F)=\sigma/c(\Phi)$, using (\ref{e:T1F}), and setting $g=a_n(F_n-F)$, we get
\begin{equation}\label{e:DAB}
DT_0(F).\{a_n(F_n-F)\}=A_n-B_n\; ,
\end{equation}
where 
\begin{equation}\label{e:An}
A_n=a_n\left((T_1(F))'[T_0(F)]\right)^{-1}
\int_{\rset}\left\{F\left(x+\frac{\sigma}{c(\Phi)}\right)
-F\left(x-\frac{\sigma}{c(\Phi)}\right)\right\}\rmd F(x)
\end{equation}
and $B_n$ has the same expression with $F$ replaced by $F_n$.
The integral in $A_n$ equals
\begin{equation}\label{e:e2}
\int_{\rset}\int_{\rset} \1_{\{|y-x|\leq\sigma/c(\Phi)\}} \rmd F(x)
\rmd F(y)=T_1(F)[T_0(F)]=1/4\; ,
\end{equation}
by definition (see (\ref{eq:def:Q})). The corresponding integral in
$B_n$ equals
\begin{multline*}
\frac{1}{n}\sum_{i=1}^n 
\left\{F\left(X_i+\frac{\sigma}{c(\Phi)}\right)
-F\left(X_i-\frac{\sigma}{c(\Phi)}\right)\right\}\\
=\frac{1}{n}\sum_{i=1}^n 
\left\{\Phi\left(\frac{X_i-\mu}{\sigma}+\frac{1}{c(\Phi)}\right)
-\Phi\left(\frac{X_i-\mu}{\sigma}-\frac{1}{c(\Phi)}\right)\right\}\; .
\end{multline*}
The result follows from (\ref{e:QFnF}), (\ref{e:DT}),
(\ref{e:DAB}), (\ref{e:T1F}) and the above expressions for $A_n$ and $B_n$.
\end{proof}

\begin{proof}[Proof of Theorem \ref{theo:short-range}]
Assumption (A\ref{assum:short-range}) and the Theorem of \cite{csorgo:mielniczuk:1996}
implies that
$\sqrt{n}(F_n-\Phi_{0,\sigma})$ converges in distribution to a Gaussian process
in the space of cadlag functions equipped with the topology of uniform convergence.
Thus, the asymptotic expansion of $a_n(\QRC[][\Phi]{n}{\chunk{X}{1}{n}}-\sigma)$ obtained in
\eqref{expansion:short-range} is valid with $a_n=\sqrt{n}$.
We thus have to prove a CLT for $n^{-1/2}\sum_{i=1}^n \IF(X_i/\sigma,Q,\Phi)$.
Using Lemma \ref{lem:hermite_Q} below, we note
that the Hermite rank of $\IF(\cdot,Q,\Phi)$ is equal to 2 and the
conclusion follows by applying \cite[Theorem 1]{breuer:major:1983}.
\end{proof}

\begin{proof}[Proof of Proposition \ref{p:sigma}]
Note that $\widehat{\sigma}^2_{n,X}=\gamma(0) \widehat{\sigma}^2_{n,Y}$, where $(Y_i)_{i\geq 1}$ satisfies
(A\ref{assum:short-range}) with $\gamma(0)=1$.
Observe that $\widehat{\sigma}^2_{n,Y}-1$ is a $U$-statistic with kernel $k(x,y)=
(x-y)^2/2- 1$.
The Hoeffding decomposition of this kernel is given by $k(x,y)=
(x^2-1)/2 + (y^2-1)/2 - x y$.
From this, we obtain the corresponding Hoeffding decomposition of $\widehat{\sigma}^2_{n,Y}-1$ as
\begin{equation}\label{eq:hoeff_dec_sigma_n}
 \widehat{\sigma}^2_{n,Y}-1= \frac{1}{n} \sum_{i=1}^{n} H_2(Y_i)-
 \frac{1}{n(n-1)} \sum_{1 \leq i \ne j \leq n} Y_i Y_j \eqsp.
\end{equation}
Under Assumption (A\ref{assum:short-range}), the first term of this
decomposition is the leading one. Then, using
\cite[Theorem 1]{breuer:major:1983},
we get that $\sqrt{n} (\widehat{\sigma}^2_{n,X} - \sigma^2)$ converges to a zero-mean Gaussian
random variable having a variance equal to
$2(\gamma(0)^2+2\sum_{k\geq 1}\gamma(k)^2)$. Using the Delta method to
go from $\widehat{\sigma}^2_{n,X}$ to $\widehat{\sigma}_{n,X}$, setting
$f(x)=\sqrt{x}$, so that
$f'(\sigma^2)=1/(2\sqrt{\sigma^2})=1/(2\sigma)$, 
we get that 
the asymptotic variance of $\sqrt{n} \left(\widehat{\sigma}_{n,X} - \sigma\right)$
is thus equal to (\ref{e:lvar1}).

By Lemma \ref{lem:hermite_Q}, the Hermite rank of $\IF(.,Q,\Phi)$ is
equal to 2, hence using  \cite[Lemma 1]{Arcones:1994},
$\widetilde{\sigma}^2$ defined in (\ref{def:sigma_tilde}) satisfies
$$
\widetilde{\sigma}^2\leq\gamma(0)^{-1}\PE[\IF(X_1/\sigma,Q,\Phi)^2] \{\gamma(0)^2+2 \sum_{k\geq
  1}\gamma(k)^2\}\; .
$$
Finally, in this case, using that $\PE[\IF(X_1/\sigma,Q,\Phi)^2]\approx
0.6077$ \cite[p. 1278]{Rousseeuw:Croux:1993},
the relative asymptotic efficiency $\widetilde{\sigma}^2_{cl}/\widetilde{\sigma}^2$ of
$\QRC[][\Phi]{n}{\chunk{X}{1}{n}}$ compared to
$\widehat{\sigma}_{n,X}$ is larger than $82.27\%$ since
$$
\frac{(2\gamma(0))^{-1}(\gamma(0)^2+2\sum_{k\geq
    1}\gamma(k)^2)}{\gamma(0)^{-1}\PE[\IF(X_1/\sigma,Q,\Phi)^2]
  \{\gamma(0)^2+2 \sum_{k\geq 1}\gamma(k)^2\}}\approx
0.5/0.6077\approx 82.27\%\;.
$$
\end{proof}

\begin{proof}[Proof of Theorem \ref{theo:gamma_short}]

Let $\Phi_{\sigma,+}$ and $\Phi_{\sigma,-}$ denote the c.d.f of
$(X_i+X_{i+h})_{i\geq 1}$ and $(X_i-X_{i+h})_{i\geq 1}$, respectively.
Let also denote by $F_{+,n-h}$ and $F_{-,n-h}$ 
the empirical c.d.f of $(X_i+X_{i+h})_{1\leq i\leq n-h}$
and $(X_i-X_{i+h})_{1\leq i\leq n-h}$, respectively. 
Since
$(X_i)_{i\geq 1}$ satisfy Assumption (A\ref{assum:short-range}), it is
the same for $(X_i+X_{i+h})_{i\geq 1}$ and $(X_i-X_{i+h})_{i\geq 1}$
with scales equal to $Q(\Phi_{\sigma,+})$ and $Q(\Phi_{\sigma,-})$,
respectively.
Thus, using the Theorem of \cite{csorgo:mielniczuk:1996}, we
obtain that $\sqrt{n-h}(F_{+,n-h}-\Phi_{\sigma,+})$ converges in distribution to a Gaussian process
in the space of cadlag functions equipped with the topology of uniform
convergence and that the same holds for $\sqrt{n-h}(F_{-,n-h}-\Phi_{\sigma,-})$.
As a consequence, the expansion (\ref{expansion:short-range}) is valid
for $\QRC[][\Phi]{n-h}{\chunk{X}{1}{n-h}+\chunk{X}{h+1}{n}}$  and
$\QRC[][\Phi]{n-h}{\chunk{X}{1}{n-h}-\chunk{X}{h+1}{n}}$ with
$a_{n-h}=\sqrt{n-h}$, that is
$$
\sqrt{n-h}
\left[\QRC[][\Phi]{n-h}{\chunk{X}{1}{n-h}\pm\chunk{X}{h+1}{n}}-Q(\Phi_{\sigma,\pm})\right]
=\frac{1}{\sqrt{n-h}}\sum_{i=1}^{n-h}
\IF(X_i\pm X_{i+h},Q,\Phi_{\sigma,\pm})+o_P(1)\; .
$$
Then, applying the Delta method
\citep[Theorem 3.1]{vdv:2000} with the transformation $b(x)=x^2$,
$b'(x)=2x$, we get
\begin{multline*}
\sqrt{n-h}
\left[\QRC[][\Phi]{n-h}{\chunk{X}{1}{n-h}\pm\chunk{X}{h+1}{n}}^2-Q^2(\Phi_{\sigma,\pm})\right]\\
=\frac{2Q(\Phi_{\sigma,\pm})}{\sqrt{n-h}}\sum_{i=1}^{n-h}
\IF(X_i\pm X_{i+h},Q,\Phi_{\sigma,\pm})+o_P(1)\; .
\end{multline*}
Hence $\widehat{\gamma}_Q(h,\chunk{X}{1}{n},\Phi)$ in (\ref{def:gamma_Q})
satisfies the following asymptotic expansion:
\begin{equation}\label{eq:asym_exp_gamma}
\sqrt{n-h}\left(\widehat{\gamma}_Q(h,\chunk{X}{1}{n},\Phi)
-\left\{Q^2(\Phi_{\sigma,+})-Q^2(\Phi_{\sigma,-})\right\}/4\right)
=\frac{1}{\sqrt{n-h}}\sum_{i=1}^{n-h}\psi(X_i,X_{i+h})+o_P(1)\; ,
\end{equation}
where for all $x$ and $y$,
$$
\psi(x,y)=\frac12\left\{Q(\Phi_{\sigma,+})\; \IF\left(x+y,Q,\Phi_{\sigma,+}\right)
 -Q(\Phi_{\sigma,-})\; \IF\left(x-y,Q,\Phi_{\sigma,-}\right)\right\}\; .
$$
Using the identity (\ref{IF:Q_1}), $\psi$ has the expression given in (\ref{def:psi}).
We have now to prove a CLT for
$(n-h)^{-1/2}\sum_{i=1}^{n-h}\psi(X_i,X_{i+h})$.
Using Lemma \ref{lem:herm_rank_psi},
the definition of the Hermite rank given in \cite[p. 2245]{Arcones:1994} and
Assumption (A\ref{assum:short-range}), we obtain that Condition (2.40)
of Theorem 4 \cite[p. 2256]{Arcones:1994} is
satisfied with $\tau=2$. This concludes the proof of the theorem
by observing that $\{Q^2(\Phi_{\sigma,+})-Q^2(\Phi_{\sigma,-})\}/4=\PE[X_1
X_{1+h}]=\gamma(h)$ (see \ref{e:CovQ}).
\end{proof}

\begin{proof}[Proof of Theorem~\ref{theo:Q_n_long-range}]
Since, by scale invariance, $\QRC[][\Phi]{n}{\chunk{X}{1}{n}}-\sigma
= \sigma(\QRC[][\Phi]{n}{\chunk{X}{1}{n}/\sigma}-1)$, we shall
focus in the sequel on the case $\gamma(0)=1$.
First, note that using Lemma \ref{lem:hermite_rank_T1(F_n)}
below the Hermite rank
of the class of functions $\{\1_{\{|\cdot-\cdot|\leq r\}}-U(r)\; ,
r\in [r_0-\eta,r_0+\eta]\}$ is $m=2$, where $U$ is defined in 
(\ref{e:U}) and $r_0$ in (\ref{eq:r_0}).

(i) Suppose first $D>1/2$. Let us verify that the assumptions of
Proposition \ref{theo:U_n_D>1/2} hold.
%
Conditions (\ref{cond:contr_G}) and (\ref{eq:G_lip_2}) are easily verified.
Let us check Condition  (\ref{eq:G_lip}).
Note that for all $\ell\geq 1$, $X_1-X_{1+\ell}\sim\mathcal{N}(0,2(1-\gamma(\ell)))$, thus if
$t\leq s$, there exists a positive constant $C$ such that,
$$
\PE[k(X_1,X_{1+\ell},s)-k(X_1,X_{1+\ell},t)]=\PP(t\leq |X_1-X_{1+\ell}|\leq s)
\leq\frac{2}{\sqrt{4\pi(1-\gamma(\ell))}}|t-s|
\leq C|t-s|\; ,
$$
where $k(x,y,r)=\1_{\{|x-y|\leq r\}}$.
Since $\gamma(\ell)\to 0$ as $\ell\to \infty$, we obtain  (\ref{eq:G_lip}).

Conditions (\ref{cond:h_1}) and (\ref{assum:h_1_Lipschitz}) are satisfied since
\begin{equation}\label{eq:h_1:grassberger}
k_1(x,r)=\PE[\1_{\{|x-Y|\leq r\}}]=\Phi(x+r)-\Phi(x-r)\; .
\end{equation}
Now consider the process
\begin{equation}\label{e:T1Fproc}
\{\sqrt{n}(T_1(F_n)[r]-T_1(F)[r]),r\in[r_0-\eta,r_0+\eta]\}\; ,
\end{equation}
where $F=\Phi$ and
$$
T_1(F)[r]=\int_{\rset^2} \1_{\{|y-x|\leq r\}} \rmd\Phi(x) \rmd\Phi(y)
=\int_{\rset}[\Phi(x+r)-\Phi(x-r)]\rmd\Phi(x)\; .
$$
By Proposition \ref{theo:U_n_D>1/2}, the process (\ref{e:T1Fproc})
converges weakly to a Gaussian process in the space of cadlag functions equipped with the
topology of uniform convergence for some $\eta>0$ when $D>1/2$.

(ii) Suppose now $D<1/2$. Let us check that the assumptions of
Proposition \ref{theo:D<1/2} hold. Condition (\ref{eq:lip_h}) holds since
it is the same as Condition (\ref{eq:G_lip}). Since $k_1$
is a Lipschitz function, so is $U$ defined in (\ref{e:U}). Let us now check
Condition (\ref{eq:lambda_tilde}).
If $s\leq t$
\begin{multline*}
\int_{\rset}\int_{\rset} \1_{\{s<x-y\leq t\}}(|x|+|xy|+|x^2-1|)\phi(x)\phi(y) \rmd x
\rmd y =
\int_{\rset}\left(\int_{x-t}^{x-s}\phi(y) \rmd y\right)|x|\phi(x) \rmd x\\
+\int_{\rset}\left(\int_{x-t}^{x-s}|y|\phi(y) \rmd y\right)|x|\phi(x) \rmd x
+\int_{\rset}\left(\int_{x-t}^{x-s}\phi(y) \rmd y\right)|x^2-1|\phi(x) \rmd x\; .
\end{multline*}
Since $\phi(\cdot)$ and $|.|\phi(\cdot)$ are bounded and that
the moments of Gaussian random variables are all finite, we get (\ref{eq:lambda_tilde}).
Then, applying Proposition \ref{theo:D<1/2} and Lemma
\ref{lem:hermite_rank_T1(F_n)} leads to
the weak convergence of the process
$\{\beta(D)n^{D}/L(n)(T_1(F_n)[r]-T_1(F)[r]),r\in[r_0-\eta,r_0+\eta]\}$
to $\{\dot{\phi}(r/\sqrt{2})(Z_{2,D}(1)-Z_{1,D}(1)^2); r\in [r_0-\eta,r_0+\eta]\}$.

We now want to use the functional Delta method
as in the proof of Lemma \ref{prop:asym_expansion} in both cases (i)
and (ii).

By  \cite[Lemma 21.3]{vdv:2000}, $T_2$ defined in
(\ref{eq:definition-T2}) is Hadamard
differentiable with the following Hadamard differential:
$
DT_2(T_1(\Phi)) \cdot g=-g(r_0)/(T_1(\Phi))'[r_0]\; .
$
Thus $DT_2(T_1(\Phi))$ is a continuous function with respect to
$g$. By the functional Delta method, with $T_0=T_2\circ T_1$, we obtain the following
expansion:
\begin{equation}\label{eq:exp:Q_n:bis}
a_n(\QRC[][\Phi]{n}{\chunk{X}{1}{n}}-Q(\Phi))
=c(\Phi)\;a_n(T_0(F_n)-T_0(\Phi))=-c(\Phi)\;a_n\frac{(T_1(F_n)-T_1(\Phi))[r_0]}{(T_1(\Phi))'[r_0]}+o_P(1)\; ,
\end{equation}
where $a_n=\sqrt{n}$ in the case $(i)$ and $a_n=\beta(D)n^{D}/L(n)$ in the case $(ii)$.
In  case $(i)$,
$$
-c(\Phi)\sqrt{n}\frac{(T_1(F_n)-T_1(\Phi))[r_0]}{(T_1(\Phi))'[r_0]}
\stackrel{d}{\longrightarrow}\mathcal{N}(0,\sigma^2_1)\; ,
$$
where $\sigma^2_1$ is given by Equation (\ref{eq:cov_struct}) in
Proposition \ref{theo:U_n_D>1/2}:
\begin{multline}\label{eq:cas(i)_1}
\sigma^2_1=4\Var\left[-\frac{c(\Phi)}{(T_1(\Phi))'[r_0]}k_1(X_1,r_0)\right]\\
+8\sum_{k\geq 1}\Cov\left[-\frac{c(\Phi)}{(T_1(\Phi))'[r_0]}k_1(X_1,r_0),
-\frac{c(\Phi)}{(T_1(\Phi))'[r_0]}k_1(X_{k+1},r_0)\right]\; ,
\end{multline}
where $k_1$ is defined in (\ref{eq:h_1:grassberger}). 
Since 
$$
\PE[k_1(X_1,r_0)]=\PE[\Phi(X_1+r_0)-\Phi(X_1-r_0)]
=\int_{\rset^2} \1_{\{|y-x|\leq r\}} \rmd\Phi(x) \rmd\Phi(y)=1/4
$$
by (\ref{eq:r_0}) and $r_0=1/c(\Phi)$ by (\ref{e:cr}), we get using
(\ref{e:T1F}) that
\begin{multline}\label{eq:cas(i)_2}
\frac{-2c(\Phi)\left[k_1(X_1,r_0)-\PE(k_1(X_1,r_0))\right]}{(T_1(\Phi))'[r_0]}\\
=c(\Phi)\frac{1/4+\Phi(X_1-1/c(\Phi))-\Phi(X_1+1/c(\Phi))}{2\int_{\rset}\phi(y)\phi(y+1/c(\Phi))
\rmd y}=\IF(X_1,Q,\Phi)\; ,
\end{multline}
where $\IF(\cdot,Q,\Phi)$ is defined in (\ref{IF:Q}).
Using (\ref{eq:cas(i)_1}), (\ref{eq:cas(i)_2}) and (\ref{eq:Q_deg0})
in Lemma \ref{lem:hermite_Q}, we get that
$$
\sigma^2_1=\PE[\IF(X_1,Q,\Phi)]+2\sum_{k\geq 1}
\PE[\IF(X_1,Q,\Phi)\IF(X_{k+1},Q,\Phi)]\; ,
$$
which concludes the proof of $(i)$.
In the case $(ii)$, 
in view of (\ref{eq:exp:Q_n:bis}), it is sufficient to show that
\begin{equation}\label{e:limitD}
-c(\Phi)\beta(D)\frac{n^{D}}{L(n)}\frac{(T_1(F_n)-T_1(\Phi))[r_0]}{(T_1(\Phi))'[r_0]}
\stackrel{d}{\longrightarrow}\frac{1}{2}(Z_{2,D}(1)-Z_{1,D}(1)^2)\; .
\end{equation}
This result follows from the convergence in
distribution of $\beta(D)n^{D}/L(n)(T_1(F_n)[r_0]-T_1(\Phi)[r_0])$
to $\dot{\phi}(r_0/\sqrt{2})(Z_{2,D}(1)-Z_{1,D}(1)^2)$, (\ref{e:T1F}) and
the identity
$
-c(\Phi)\;\dot{\phi}(r_0/\sqrt{2})(2\int_{\rset}\phi(y)\phi(y+r_0)\rmd y)^{-1}
=1/2.
$
This identity follows from 
$\dot{\phi}(r_0/\sqrt{2})=-(2\sqrt{\pi})^{-1}\exp(-r_0^2/4) r_0$
and $r_0=1/c(\Phi)$.
\end{proof}

\begin{proof}[Proof of Proposition \ref{p:sigma2}]

Using the same arguments as those used in the proof of Proposition
\ref{p:sigma}, we get that $\widehat{\sigma}^2_{n,Y}-1$ satisfies
the Hoeffding decomposition (\ref{eq:hoeff_dec_sigma_n}), where $(Y_i)_{i\geq 1}$ satisfies
(A\ref{assum:long-range}) with $\gamma(0)=1$.

(a) If $D > 1/2$, using \cite{dehling:taqqu:1991},
the first term  in the decomposition (\ref{eq:hoeff_dec_sigma_n}) is the
leading one, then using the same arguments as those used in the proof of Proposition
\ref{p:sigma}, we get that  the asymptotic variance of $\sqrt{n} \left(\widehat{\sigma}_{n,X} - \sigma
 \right)$ is equal to
$$
(2\gamma(0))^{-1}(\gamma(0)^2+2\sum_{k\geq 1}\gamma(k)^2) \; .
$$
Using the same upper bound as the one used in the proof of Proposition
\ref{p:sigma}, we get that the relative efficiency of the
robust scale estimator is, in this case, larger than 82.27$\%$.

(b) If $D < 1/2$, we can apply the results of  \cite{dehling:taqqu:1991}
and the classical Delta method to show that
$$
\beta(D) n^D L(n)^{-1}(\widehat{\sigma}_{n,X}-\sigma) \stackrel{d}{\longrightarrow}
\sigma/2 \left(Z_{2,D}(1) - Z^2_{1,D}(1) \right)  \; .
$$
\end{proof}

\begin{proof}[Proof of Theorem~\ref{theo:gamma_long}]
Let $\Phi_{\sigma,+}$ and $\Phi_{\sigma,-}$ denote the c.d.f of
$(X_i+X_{i+h})_{i\geq 1}$ and $(X_i-X_{i+h})_{i\geq 1}$, respectively.
Since
$(X_i)_{i\geq 1}$ satisfies Assumption (A\ref{assum:long-range}), a straightforward
application of a Taylor formula shows that the same holds for
$(X_i+X_{i+h})_{i\geq 1}$ with a scale equal to $Q(\Phi_{\sigma,+})$
and $L$ replaced by some slowly varying function $\widetilde{L}$.
Thus,  in the
case $(i)$, where $D>1/2$, we obtain that 
$\QRC[][\Phi]{n-h}{\{\chunk{X}{1}{n-h}+\chunk{X}{h+1}{n}\}/Q(\Phi_{\sigma,+})}$
satisfies the expansion (\ref{eq:exp:Q_n:bis}) with
$a_{n}=\sqrt{n}$ as proved in the proof of Theorem
\ref{theo:Q_n_long-range}. Using (\ref{eq:dev_UG}), we get that
\begin{multline*}
\sqrt{n}\left\{\QRC[][\Phi]{n-h}{\chunk{X}{1}{n-h}+\chunk{X}{h+1}{n}}-Q(\Phi_{\sigma,+})\right\}\\
=-\frac{c(\Phi)Q(\Phi_{\sigma,+})}
{(T_1(\Phi))'[r_0]}\;\frac{2}{\sqrt{n}}\sum_{i=1}^n 
\left[k_{1}(\{X_i+X_{i+h}\}/Q(\Phi_{\sigma,+}),r_0)-U(r_0)\right]
+o_P(1)\;,
\end{multline*}
where $k_1$ and $U$ are defined in (\ref{eq:h_1:grassberger}) and
(\ref{e:U}), respectively. Thus, using (\ref{eq:cas(i)_2}) and (\ref{IF:Q_1}), we obtain
\begin{equation}\label{eq:dev_QRC_+}
\sqrt{n}\left\{\QRC[][\Phi]{n-h}{\chunk{X}{1}{n-h}+\chunk{X}{h+1}{n}}-Q(\Phi_{\sigma,+})\right\}
=\frac{1}{\sqrt{n}}\sum_{i=1}^n \IF(X_i+X_{i+h},Q,\Phi_{\sigma,+})+o_P(1)\; .
\end{equation} 
In the case $(ii)$, where $D<1/2$, we get from the expansion
(\ref{eq:exp:Q_n:bis}) that
\begin{multline}\label{eq:exp_gamma:D<1/2}
\beta(D)\frac{(n-h)^{D}}{\widetilde{L}(n-h)}(\QRC[][\Phi]{n-h}
{\{\chunk{X}{1}{n-h}+\chunk{X}{h+1}{n}\}/Q(\Phi_{\sigma,+})}-1)\\
=-c(\Phi)
\beta(D)\frac{(n-h)^{D}}{\widetilde{L}(n-h)}\frac{(T_1(F_{+,n-h})-T_1(\Phi))(r_{0})}{(T_1(\Phi))'[r_{0}]}+o_P(1)\; .
\end{multline}
where $F_{+,n-h}$ denotes
the empirical c.d.f of $(\{X_i+X_{i+h}\}/Q(\Phi_{\sigma,+}))_{1\leq i\leq n-h}$.

Let us now focus on the autocovariances and consider first the case
(i) where $D>1/2$. Let us denote by $\gamma_{-}(k)$ the autocovariance
of the process $(X_i-X_{i+h})_{i\geq 1}$ computed at lag $k$. Using a Taylor
formula,
$\gamma_-(k)=O(k^{-2-D+\epsilon})$, for $\epsilon$ in $(0,D)$ such that
$L_i(x)/x^{\epsilon}=O(1)$, as $x$ tends to infinity, for all
$i=0,1,2,3$.
Let $F_{-,n-h}$ denote the empirical c.d.f of $(X_i-X_{i+h})_{1\leq i\leq n-h}$.
Since $\sum_k |\gamma_-(k)|<\infty$,
the process $(X_i-X_{i+h})_{i\geq 1}$ satisfies
Assumption (A\ref{assum:short-range}) implying that
$\sqrt{n}(F_{-,n-h}-\Phi_{\sigma,-})$ converges in distribution
to a Gaussian process in the space of cadlag functions equipped with
the topology of uniform convergence (\cite{csorgo:mielniczuk:1996}).
As a consequence, by Lemma \ref{prop:asym_expansion}, the expansion (\ref{expansion:short-range}) is valid
for $\QRC[][\Phi]{n-h}
{\chunk{X}{1}{n-h}-\chunk{X}{h+1}{n}}$ with
$a_{n}=\sqrt{n}$
where $\IF$ is defined in (\ref{IF:Q}).

Then, in the case $(i)$, using the Delta method
(Theorem 3.1 P. 26 in \cite{vdv:2000}), $\widehat{\gamma}_Q(h,\chunk{X}{1}{n},\Phi)$ satisfies
the following asymptotic expansion as in (\ref{eq:asym_exp_gamma}):
\begin{multline}\label{eq:asym_exp_gamma_bis}
\sqrt{n-h}\left(\widehat{\gamma}_Q(h,\chunk{X}{1}{n},\Phi)
-\left\{Q^2(\Phi_{\sigma,+})-Q^2(\Phi_{\sigma,-})\right\}/4\right)
=\frac{1}{\sqrt{n-h}}\sum_{i=1}^{n-h}\psi(X_i,X_{i+h})+o_P(1)\; ,
\end{multline}
where $\psi$ is defined in (\ref{def:psi}).
Hence, we have to establish a CLT for
$(n-h)^{-1/2}\sum_{i=1}^{n-h}\psi(X_i,X_{i+h})$.
Using Lemma \ref{lem:herm_rank_psi},
the definition of the Hermite rank given on p.~2245 in \cite{Arcones:1994} and
Assumption (A\ref{assum:long-range}) with $D>1/2$, we obtain that
Condition (2.40) of Theorem 4 (P. 2256) in \cite{Arcones:1994} is
satisfied with $\tau=2$. This concludes the proof of $(i)$
by observing from (\ref{e:CovQ}) that $\{Q^2(\Phi_{\sigma,+})-Q^2(\Phi_{\sigma,-})\}/4=\PE[X_1
X_{1+h}]=\gamma(h)$.

Consider the case $(ii)$ where $D<1/2$.
Using (\ref{def:gamma_Q}) and
$\gamma(h)=[Q^2(\Phi_{\sigma,+})-Q^2(\Phi_{\sigma,-})]/4$,
one has
\begin{equation}\label{e:A+A-}
\widehat{\gamma}_Q(h,\chunk{X}{1}{n},\Phi)-\gamma(h)
=A^+_n-A^-_n\; ,
\end{equation}
where
$$
A^{\pm}_n=\frac{1}{4}[\QRC[][\Phi]{n-h}{\chunk{X}{1}{n-h}\pm\chunk{X}{h+1}{n}}^2
-Q^2(\Phi_{\sigma,\pm})]\; .
$$
We first show that the contribution of $A^-_n$ is negligeable. Since
the expansion (\ref{expansion:short-range}) holds for  
$\sqrt{n-h}(\QRC[][\Phi]{n-h}{\chunk{X}{1}{n-h}-\chunk{X}{h+1}{n}}
-Q(\Phi_{\sigma,-}))$, we conclude by arguing as in the proof of
Theorem \ref{theo:short-range}, that this expression is $O_P(1)$.
Applying  the Delta method, we get the same type of result for
$Q^2_{n-h}$, namely
$
\sqrt{n-h}(\QRC[][\Phi]{n-h}{\chunk{X}{1}{n-h}-\chunk{X}{h+1}{n}}^2
-Q^2(\Phi_{\sigma,-}))=O_P(1)
$
and therefore, since $D<1/2$,
\begin{equation}\label{e:A-}
\beta(D)\frac{(n-h)^{D-1/2}}{\widetilde{L}(n-h)}
\sqrt{n-h}\; A^-_n=o_P(1)\; .
\end{equation}
We now turn to $A^+_n$. Applying the Delta method with the
transformation $b(x)=x^2$ to (\ref{eq:exp_gamma:D<1/2}) and using (\ref{e:limitD}) yields
\begin{multline*}
\beta(D)\frac{(n-h)^{D}}{\widetilde{L}(n-h)} A^+_n
=-\frac{c(\Phi)\beta(D)}{2}\frac{(n-h)^{D}}{\widetilde{L}(n-h)}
Q^2(\Phi_{\sigma,+})\frac{(T_1(F_{+,n-h})-T_1(\Phi))[r_0]}{(T_1(\Phi))'[r_0]}+o_P(1)\\
\stackrel{d}{\longrightarrow}\frac{Q^2(\Phi_{\sigma,+})}{4}(Z_{2,D}(1)-Z_{1,D}(1)^2)\; .
\end{multline*}
The result follows from (\ref{e:A+A-}), (\ref{e:A-}) and 
$Q^2(\Phi_{\sigma,+})=\Var(X_1+X_h)=2(\gamma(0)+\gamma(h)).$

\end{proof}

\begin{proof}[Proof of Proposition \ref{p:sigma3}]

The classical autocovariance estimator can be obtained 
from the classical scale estimator $\widehat{\sigma}_{n,X}$ as in
Equation (\ref{def:gamma_Q_init}).  More precisely, a 
straightforward calculation leads to
\begin{equation}\label{eq:classi_auto}
\widehat{\gamma}(h)=\frac{1}{4}\left(\widehat{\sigma}_{n-h,X_{1:n-h}+X_{h+1:n}}^2
-\widehat{\sigma}_{n-h,X_{1:n-h}-X_{h+1:n}}^2\right)(1+o(1))+O_P\left(\frac{1}{n^2}\right)\;.
\end{equation}
In order to alleviate the notations, 
$\widehat{\sigma}_{n-h,X_{1:n-h}+X_{h+1:n}}$ will now be denoted by 
$\widehat{\sigma}_+$ and $\widehat{\sigma}_{n-h,X_{1:n-h}-X_{h+1:n}}$ 
by $\widehat{\sigma}_-$. 

On the one hand, using Proposition \ref{p:sigma2} and the 
same arguments as in the beginning of the proof of Theorem 
\ref{theo:gamma_long}, we have
$$
\beta(D)\frac{n^{D}}{\widetilde{L}(n)}\left(\widehat{\sigma}_+-\sigma_+\right)
\stackrel{d}{\longrightarrow}\frac{\sigma_+}{2}(Z_{2,D}(1)-Z_{1,D}(1)^2)\;,$$
where $\sigma_+$ denotes the standard deviation of $X_1+X_{1+h}$ and
$\widetilde{L}(n)$ is defined in Theorem \ref{theo:gamma_long}. 
Note that $\sigma_+^2=2(\gamma(0)+\gamma(h))$. 
By the classical Delta method, we thus obtain
\begin{equation}\label{eq:sigma_plus}
\beta(D)\frac{n^{D}}{\widetilde{L}(n)}\left(\widehat{\sigma}^2_+-\sigma^2_+\right)
\stackrel{d}{\longrightarrow}2(\gamma(0)+\gamma(h))(Z_{2,D}(1)-Z_{1,D}(1)^2).
\end{equation}
On the other hand, by the same arguments as in Theorem
\ref{theo:gamma_long}, the process $(X_i-X_{i+h})_{i\geq 1}$ 
satisfies Assumption (A\ref{assum:short-range}). 
Let $\sigma_-^2=2(\gamma(0)-\gamma(h))$ denote the variance of
$X_1-X_{1+h}$.
Then as in the proof of Proposition \ref{p:sigma}, 
$ \sqrt{n}\left(\widehat{\sigma}^2_--\sigma^2_-\right)$ 
converges in distribution. This implies that 
\begin{equation}\label{eq:sigma_moins}
\beta(D)\frac{n^{D}}{\widetilde{L}(n)}\left(\widehat{\sigma}^2_--\sigma^2_-\right)=o_P(1).
\end{equation}
Using (\ref{eq:classi_auto}), (\ref{eq:sigma_plus}) and (\ref{eq:sigma_moins}), we get: 
$$\beta(D)\frac{n^{D}}{\widetilde{L}(n)}\left(\widehat{\gamma}(h)-\gamma(h)\right)
\stackrel{d}{\longrightarrow}\frac{\gamma(0)+\gamma(h)}{2}(Z_{2,D}(1)-Z_{1,D}(1)^2)\;.$$

\end{proof}

\section{Technical lemmas}\label{sec:lemmas}

\begin{lemma}\label{lem:hermite_Q}
Let $X$ be a standard Gaussian random variable. The influence function
defined in \eqref{IF:Q} has the
following properties:
\begin{equation}\label{eq:Q_deg0}
\PE[\IF(X,Q,\Phi)]=0\; ,
\end{equation}
\begin{equation}\label{eq:Q_deg1}
\PE[X \; \IF(X,Q,\Phi)]=0\; ,
\end{equation}
\begin{equation}\label{eq:Q_deg2}
\PE[X^2 \; \IF(X,Q,\Phi)]=(2\sqrt{\pi}\beta)^{-1}\exp(-1/(4c^2)) \neq 0\; ,
\end{equation}
where $\Phi$ is the c.d.f of a standard Gaussian random variable, $c=c(\Phi)$
is defined in (\ref{eq:definition-Qn}) and $\beta=\int \phi(y)\phi(y+1/c) \rmd y$.
\end{lemma}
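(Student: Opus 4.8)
The plan is to read \eqref{eq:Q_deg0}--\eqref{eq:Q_deg2} as the Hermite coefficients of order $0$, $1$ and $2$ of the function $\IF(\cdot,Q,\Phi)$ and to compute them directly. Throughout I write $c=c(\Phi)$ and $\beta=\int_{\rset}\phi(y)\phi(y+1/c)\rmd y$, so that by \eqref{IF:Q},
$$\IF(x,Q,\Phi)=\frac{c}{\beta}\left\{\tfrac14-\Phi(x+1/c)+\Phi(x-1/c)\right\}\; .$$
The one analytic ingredient I would isolate at the outset is the smoothing identity $\PE[\Phi(X+a)]=\Phi(a/\sqrt2)$, valid for every real $a$: writing $\Phi(X+a)=\CPP{Y\le X+a}{X}$ for an independent standard Gaussian $Y$ gives $\PE[\Phi(X+a)]=\PP(Y-X\le a)=\Phi(a/\sqrt2)$, since $Y-X\sim\mathcal N(0,2)$. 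The quantitative calibration is supplied by \eqref{eq:c(Phi)}, which reads $1/c=\sqrt2\,\Phi^{-1}(5/8)$ and hence yields $\Phi(1/(c\sqrt2))=5/8$ and $\Phi(-1/(c\sqrt2))=3/8$.

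Given this, \eqref{eq:Q_deg0} is immediate: taking expectations term by term, $-\PE[\Phi(X+1/c)]+\PE[\Phi(X-1/c)]=-\Phi(1/(c\sqrt2))+\Phi(-1/(c\sqrt2))=-\tfrac58+\tfrac38=-\tfrac14$, which cancels the constant $\tfrac14$, so $\PE[\IF(X,Q,\Phi)]=0$. For \eqref{eq:Q_deg1} I would avoid computation altogether and use a parity argument: from $\Phi(-u)=1-\Phi(u)$ one checks that $\IF(-x,Q,\Phi)=\IF(x,Q,\Phi)$, i.e. $\IF(\cdot,Q,\Phi)$ is even; hence $x\mapsto x\,\IF(x,Q,\Phi)$ is odd and integrates to zero against the symmetric Gaussian density, giving $\PE[X\,\IF(X,Q,\Phi)]=0$.

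The substance of the lemma is \eqref{eq:Q_deg2}, and the main computation I would carry out is $\PE[X^2\Phi(X+a)]$. Using $\phi''(x)=(x^2-1)\phi(x)$, write $x^2\phi(x)=\phi(x)+\phi''(x)$, so that $\PE[X^2\Phi(X+a)]=\Phi(a/\sqrt2)+\int_{\rset}\Phi(x+a)\phi''(x)\rmd x$. Two integrations by parts (the boundary terms vanish because $\phi$ and $\phi'$ decay at infinity) turn the last integral into $\int_{\rset}\phi'(x+a)\phi(x)\rmd x=\beta'(a)$, where $\beta(a)=\int_{\rset}\phi(x+a)\phi(x)\rmd x$. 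Completing the square gives the closed form $\beta(a)=(2\sqrt\pi)^{-1}\exp(-a^2/4)$, whence $\beta'(a)=-\tfrac a2\beta(a)$ and $\PE[X^2\Phi(X+a)]=\Phi(a/\sqrt2)-\tfrac a2\beta(a)$.

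Finally I would assemble the coefficient. Since $\PE[X^2]=1$, the bracket defining $\PE[X^2\IF(X,Q,\Phi)]$ equals $\tfrac14-\PE[X^2\Phi(X+1/c)]+\PE[X^2\Phi(X-1/c)]$; substituting the formula above, the $\Phi(\pm1/(c\sqrt2))$ terms again combine to $-\tfrac14$ and cancel the $\tfrac14$, while the two contributions $\tfrac{1}{2c}\beta(1/c)$ (both with a plus sign, since $a=\pm1/c$) combine to $\tfrac1c\beta(1/c)=\tfrac1c(2\sqrt\pi)^{-1}\exp(-1/(4c^2))$. Multiplying by $c/\beta$ leaves $(2\sqrt\pi\beta)^{-1}\exp(-1/(4c^2))$, which is the claimed value and is manifestly nonzero (indeed it equals $1$, since $\beta=(2\sqrt\pi)^{-1}\exp(-1/(4c^2))$). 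The only place demanding care is this last evaluation of $\int_{\rset}\Phi(x+a)\phi''(x)\rmd x$ --- keeping track of signs through the two integrations by parts and through the differentiation of $\beta$ --- but it is otherwise routine.
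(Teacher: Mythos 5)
Your proposal is correct and takes essentially the same route as the paper's proof: all three identities are verified by direct Gaussian computation, using the defining property of $c$ (equivalently $\Phi(1/(c\sqrt{2}))=5/8$) for \eqref{eq:Q_deg0}, a symmetry argument for \eqref{eq:Q_deg1}, and integration by parts together with the closed form of $a\mapsto\int\phi(y)\phi(y+a)\,\rmd y$ for \eqref{eq:Q_deg2}. The differences are only organizational --- the paper works with double integrals over $\1_{\{|x-y|\leq 1/c\}}$ and the substitution $x\mapsto -x$, whereas you use the smoothing identity $\PE[\Phi(X+a)]=\Phi(a/\sqrt{2})$ and the evenness of $\IF(\cdot,Q,\Phi)$ --- and your remark that the constant in \eqref{eq:Q_deg2} actually equals $1$ is consistent with the paper's expression.
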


\begin{proof}[Proof of Lemma \ref{lem:hermite_Q}]
Let us first prove that $\PE[\IF(X,Q,\Phi)]=0$. It is enough to prove that
$\PE[\Phi(X+1/c)-\Phi(X-1/c)]=1/4$. Using the definition of $c$,
namely (\ref{e:e2}) or (\ref{eq:r_0}), we get:
\begin{multline}\label{e:cc}
\PE[\Phi(X+1/c)-\Phi(X-1/c)]
=\int_{\rset}(\Phi(x+1/c)-\Phi(x-1/c))\phi(x) \rmd x\\
=\int_{\rset^2}\1_{\{|y-x|\leq 1/c\}}\phi(x)\phi(y) \rmd x \rmd y
=T_1(\Phi)[1/c]=1/4\; .
\end{multline}
Then, let us prove that $\PE[X \IF(X,Q,\Phi)]=0$. Since $X$ has a standard Gaussian distribution,
it suffices to prove that $\PE[X\{\Phi(X+1/c)-\Phi(X-1/c)\}]=0$. By symmetry of $\phi$, we obtain:
\begin{multline*}
\PE[X\Phi(X+1/c)]=\int_{\rset}x\Phi(x+1/c)\phi(x) \rmd x
=\int_{\rset}x(1-\Phi(-x-1/c))\phi(x) \rmd x \\
= -\int_{\rset}x\Phi(-x-1/c) \phi(x) \rmd x = \PE[X\Phi(X-1/c)]\; .
\end{multline*}
Finally, let us compute: $\PE[X^2 \IF(X,Q,\Phi)]$. Set $\beta=\int \phi(y)\phi(y+1/c) \rmd y$.
By integrating by parts, using (\ref{e:cc}) and finally the symmetry
of $\phi$, we get
\begin{multline*}
(\beta/c)\PE[X^2 \IF(X,Q,\Phi)]=-\int_{\rset}\left(\int_{y-1/c}^{y+1/c} x^2 \phi(x) \rmd x\right)\phi(y) \rmd y +1/4\\
=-\int_{\rset}\left\{(y-1/c)\phi(y-1/c)-(y+1/c)\phi(y+1/c)\right\}\phi(y) \rmd y
-\int_{\rset}\left(\int_{y-1/c}^{y+1/c}\phi(x)\rmd x\right)\phi(y)\rmd y+1/4\\
=\int_{\rset}\left\{-(y-1/c)\phi(y-1/c)+(y+1/c)\phi(y+1/c)\right\}\phi(y) \rmd y\; ,
\end{multline*}
where the last equality comes from 
$\int_{\rset}\left(\int_{y-1/c}^{y+1/c}\phi(x)\rmd x\right)\phi(y)\rmd y=T_1(\Phi)(1/c)=1/4.$
By symmetry of $\phi$,
\begin{multline*}
\int_{\rset}\left\{-(y-1/c)\phi(y-1/c)+(y+1/c)\phi(y+1/c)\right\}\phi(y) \rmd y
=-2  \int_{\rset}(y-1/c)\phi(y-1/c)\phi(y) \rmd y\\
=(2c\sqrt{\pi})^{-1}\exp(-1/(4c^2)) \; ,
\end{multline*}
which concludes the proof.
\end{proof}

\begin{lemma}\label{lem:herm_rank_psi}
Let $(X,Y)$ be a standard Gaussian random vector such that
$\Cov(X,Y)=0$ and let $\Phi_{+}$ and $\Phi_{-}$
denote the c.d.f. of $X+Y$ and $X-Y$, respectively.
The influence function $\psi$ defined, for all $x$ and $y$ in $\rset$, by
$$
\psi(x,y)=\frac12\left\{Q(\Phi_{+})\; \IF\left(x+y,Q,\Phi_{+}\right)
 -Q(\Phi_{-})\; \IF\left(x-y,Q,\Phi_{-}\right)\right\}\; ,
$$
satisfies the following properties:
\begin{equation}\label{eq:psi_deg0}
\PE[\psi(X,Y)]=0\; ,
\end{equation}
\begin{equation}\label{eq:psi_deg1}
\PE[X\psi(X,Y)]=\PE[Y\psi(X,Y)]=0\; ,
\end{equation}
\begin{equation}\label{eq:psi_deg2}
\PE[XY\psi(X,Y)]\neq 0\; .
\end{equation}
\end{lemma}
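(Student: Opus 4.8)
The plan is to reduce the statement to the one-dimensional influence function $g\eqdef\IF(\cdot,Q,\Phi)$, for which Lemma~\ref{lem:hermite_Q} already provides everything needed: $\PE[g(X)]=0$, $\PE[Xg(X)]=0$ and $\PE[X^2g(X)]\neq0$, so that $g$ has Hermite rank exactly $2$. Since $(X,Y)$ is standard Gaussian with $\Cov(X,Y)=0$, the variables $X$ and $Y$ are independent and $X\pm Y\sim\mathcal{N}(0,2)$; hence $\Phi_{\pm}=\Phi_{0,\sqrt2}$ and, by \eqref{e:VarQ}, $Q(\Phi_{\pm})=\sqrt2$. Using the scaling identity \eqref{IF:Q_1}, one has $Q(\Phi_{\pm})\IF(z,Q,\Phi_{\pm})=\sqrt2\cdot\sqrt2\,g(z/\sqrt2)=2\,g(z/\sqrt2)$, and therefore
\[
\psi(x,y)=g\!\left(\frac{x+y}{\sqrt2}\right)-g\!\left(\frac{x-y}{\sqrt2}\right)\; .
\]

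The key device is the orthogonal change of variables $U=(X+Y)/\sqrt2$, $V=(X-Y)/\sqrt2$: being a rotation of an i.i.d.\ standard Gaussian pair, $(U,V)$ is again a pair of \emph{independent} standard Gaussian variables, with $\psi(X,Y)=g(U)-g(V)$, $X=(U+V)/\sqrt2$, $Y=(U-V)/\sqrt2$ and $XY=(U^2-V^2)/2$. Claim \eqref{eq:psi_deg0} is then immediate, since $\PE[\psi(X,Y)]=\PE[g(U)]-\PE[g(V)]=0$. For \eqref{eq:psi_deg1} I would write $\PE[X\psi(X,Y)]=(1/\sqrt2)\,\PE[(U+V)(g(U)-g(V))]$ and expand; by independence and $\PE[U]=\PE[V]=0$ the mixed terms factor as $\PE[V]\PE[g(U)]$ and $\PE[U]\PE[g(V)]$ and so vanish, while the remaining terms $\PE[Ug(U)]$ and $\PE[Vg(V)]$ vanish by $\PE[Xg(X)]=0$; the computation for $\PE[Y\psi(X,Y)]$ is identical up to signs.

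Finally, for the non-degeneracy \eqref{eq:psi_deg2}, expanding $\PE[XY\psi(X,Y)]=(1/2)\,\PE[(U^2-V^2)(g(U)-g(V))]$ and again using independence, the cross terms reduce to $\PE[V^2]\PE[g(U)]$ and $\PE[U^2]\PE[g(V)]$, which are zero, whereas the diagonal terms give $(1/2)\left(\PE[U^2g(U)]+\PE[V^2g(V)]\right)=\PE[X^2g(X)]$, which is nonzero by \eqref{eq:Q_deg2}. There is no genuine obstacle in this argument: all the content sits in the observation that passing to $U,V$ decouples the two summands of $\psi$, after which the three assertions are direct consequences of the Hermite-rank-$2$ property of $g$ recorded in Lemma~\ref{lem:hermite_Q}. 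The only step that demands a little care is the initial normalisation $Q(\Phi_{\pm})=\sqrt2$ together with the correct use of the affine-equivariance relation \eqref{IF:Q_1}.
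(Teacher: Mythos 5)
Your proof is correct and follows essentially the same route as the paper's: the paper also passes to the independent standard Gaussian pair $U=(X+Y)/Q(\Phi_+)$, $V=(X-Y)/Q(\Phi_-)$ (your rotation, since $Q(\Phi_\pm)=\sqrt2$ here), uses the decomposition $4XY=(X+Y)^2-(X-Y)^2$, and reduces all three claims to the moment identities \eqref{eq:Q_deg0}--\eqref{eq:Q_deg2} of Lemma~\ref{lem:hermite_Q} via the scaling relation \eqref{IF:Q_1}. Your early normalisation $Q(\Phi_\pm)=\sqrt2$, which simplifies $\psi$ to $g\left((x+y)/\sqrt2\right)-g\left((x-y)/\sqrt2\right)$, is only a cosmetic streamlining of the paper's symbolic computation.
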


\begin{proof}[Proof of Lemma \ref{lem:herm_rank_psi}]
Using (\ref{IF:Q_1}), (\ref{eq:Q_deg0}) and 
$Q(\Phi_{\pm})^2=\Var(X\pm Y)$ (see (\ref{e:VarQ})), we get that
$$
\PE[\psi(X,Y)]=\frac12\{Q(\Phi_+)^2-Q(\Phi_-)^2\}\PE\left[\IF(U,Q,\Phi)\right]=0\; ,
$$
where $U$ is a standard Gaussian random variable, which gives (\ref{eq:psi_deg0}).
Let us now prove  (\ref{eq:psi_deg1}). First note that,
$$
\PE[X\psi(X,Y)]=\frac12\left\{\PE[(X+Y)\psi(X,Y)]+\PE[(X-Y)\psi(X,Y)]\right\}\; .
$$
But,
\begin{eqnarray*}
\PE[(X+Y)\psi(X,Y)]&=&\frac12\PE\left[Q(\Phi_+)^2 (X+Y)\IF((X+Y)/Q(\Phi_+),Q,\Phi)\right.\\
&-&\left. Q(\Phi_-)^2 (X+Y)\IF((X-Y)/Q(\Phi_-),Q,\Phi)\right]\\
&=&\frac12\left[Q(\Phi_+)^3 \PE[U\IF(U,Q,\Phi)
-Q(\Phi_-)^2 Q(\Phi_+)\PE[U\IF(V,Q,\Phi)\right]\; ,
\end{eqnarray*}
where $U=(X+Y)/Q(\Phi_+)$ and $V=(X-Y)/Q(\Phi_-)$
are independent standard Gaussian random variables. By
(\ref{eq:Q_deg1}), $\PE[(X+Y)\psi(X,Y)]=0$. In the same way,
 $\PE[(X-Y)\psi(X,Y)]=0$ which gives (\ref{eq:psi_deg1}).
Let us now prove (\ref{eq:psi_deg2}).
Using that $4XY=(X+Y)^2-(X-Y)^2$, we get
\begin{multline}\label{eq:psi_deg2_detail}
8\PE[XY\psi(X,Y)]
=\PE[(X+Y)^2 Q(\Phi_+) \IF(X+Y,Q,\Phi_+)+(X-Y)^2 Q(\Phi_-) \IF(X-Y,Q,\Phi_-)]\\
-\PE[(X-Y)^2 Q(\Phi_+) \IF(X+Y,Q,\Phi_+)+(X+Y)^2 Q(\Phi_-)
\IF(X-Y,Q,\Phi_-)]\\
=(Q(\Phi_+)^4+Q(\Phi_-)^4) \PE[U^2 \IF(U,Q,\Phi)]
-Q(\Phi_+)^2 Q(\Phi_-)^2 \left(\PE[V^2 \IF(U,Q,\Phi)]+\PE[U^2 \IF(V,Q,\Phi)]\right)\; ,
\end{multline}
where $U$ and $V$ are as above. The first term is non-zero by
(\ref{eq:Q_deg2}) while the second term is zero by independence of 
$U$ and $V$ and (\ref{eq:Q_deg0}). This yields (\ref{eq:psi_deg2}).

\end{proof}

\begin{lemma}\label{lem:hermite_rank_T1(F_n)}
Let $\alpha_{p,q}(r)=\PE[\1_{\{|X-Y|\leq r\}}H_p(X)H_p(Y)]$ where $X$ and $Y$
are independent standard Gaussian random variables and $H_p$ is the $p$th Hermite polynomial
with leading coefficient equal to 1. Then,
\begin{enumerate}
\item [(i)] $\alpha_{1,0}(r)=0,\; \forall r\in\rset$
\item [(ii)] $\alpha_{2,0}(r)=-\alpha_{1,1}(r)=\dot{\phi}(r/\sqrt{2}),\; \forall
r\in\rset$
\item[(iii)] Moreover, there exists some positive $\eta$ 
  such as $\alpha_{2,0}(r)=-\alpha_{1,1}(r)$
is different from 0 when $r$ is in $ [r_0-\eta;r_0+\eta]$,
where  $r_0$ is defined in (\ref{eq:c(Phi)}).
\end{enumerate}
\end{lemma}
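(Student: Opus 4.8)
The plan is to reduce all three coefficients to one-dimensional Gaussian integrals by passing to the rotated coordinates $U=X-Y$ and $V=X+Y$. Since $X$ and $Y$ are independent standard Gaussians, $U$ and $V$ are independent centered Gaussians each with variance $2$, and the decisive feature is that the indicator factors through $U$ alone: $\1_{\{|X-Y|\leq r\}}=\1_{\{|U|\leq r\}}$. Writing $X=(U+V)/2$ and $Y=(V-U)/2$, one has $XY=(V^2-U^2)/4$ and $H_2(X)=X^2-1=(U+V)^2/4-1$, so that each $\alpha_{p,q}(r)=\PE[\1_{\{|X-Y|\leq r\}}H_p(X)H_q(Y)]$ in the statement becomes a sum of terms that split via the independence of $U$ and $V$.

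For part (i), I would simply observe that $\alpha_{1,0}(r)=\PE[\1_{\{|U|\leq r\}}(U+V)/2]$: the contribution of $V$ vanishes since $U$ and $V$ are independent and $\PE[V]=0$, while the contribution of $U$ vanishes because the law of $U$ is symmetric about the origin (equivalently, $x\phi(x)\phi(y)$ is odd under $(x,y)\mapsto(-x,-y)$, a map preserving $\{|x-y|\leq r\}$). Hence $\alpha_{1,0}(r)=0$ for every $r$.

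For part (ii), expanding as above and using $\PE[V]=0$, $\PE[V^2]=2$, and $\PE[\1_{\{|U|\leq r\}}UV]=\PE[\1_{\{|U|\leq r\}}U]\,\PE[V]=0$, both $\alpha_{2,0}(r)$ and $\alpha_{1,1}(r)$ collapse onto the two scalar quantities $\PP(|U|\leq r)$ and $m(r):=\PE[\1_{\{|U|\leq r\}}U^2]$. A direct check gives $\alpha_{2,0}(r)=\tfrac14 m(r)-\tfrac12\PP(|U|\leq r)$ and $\alpha_{1,1}(r)=\tfrac12\PP(|U|\leq r)-\tfrac14 m(r)$, so the identity $\alpha_{2,0}(r)=-\alpha_{1,1}(r)$ comes out for free. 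It then remains to evaluate $m(r)$. Writing $U=\sqrt{2}\,Z$ with $Z$ standard Gaussian and setting $a=r/\sqrt{2}$, one has $m(r)=2\int_{-a}^{a}z^2\phi(z)\,\rmd z$, and an integration by parts using $z\phi(z)=-\dot{\phi}(z)$ yields $\int_{-a}^{a}z^2\phi(z)\,\rmd z=(2\Phi(a)-1)-2a\phi(a)$. Substituting back, the terms in $\Phi(a)$ cancel and one is left with $\alpha_{2,0}(r)=-a\phi(a)=-(r/\sqrt{2})\phi(r/\sqrt{2})=\dot{\phi}(r/\sqrt{2})$, as claimed. This integration-by-parts bookkeeping is the only genuinely computational step, and it is routine; no conceptual obstacle is expected.

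Finally, part (iii) is immediate from the closed form obtained in (ii). Since $\phi$ is strictly positive everywhere, $\dot{\phi}(r/\sqrt{2})=-(r/\sqrt{2})\phi(r/\sqrt{2})$ vanishes only at $r=0$. Because $r_0=1/c(\Phi)=\sqrt{2}\,\Phi^{-1}(5/8)>0$ (as $5/8>1/2$), any choice of $\eta\in(0,r_0)$ keeps the interval $[r_0-\eta,r_0+\eta]$ inside $(0,\infty)$, and there $\alpha_{2,0}(r)=-\alpha_{1,1}(r)\neq 0$, which establishes the claim.
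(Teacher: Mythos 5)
Your proof is correct, but it takes a genuinely different route from the paper's. The paper works directly in the $(x,y)$ coordinates: part (i) is disposed of by symmetry, and part (ii) is reduced --- by integrating out one variable, which produces the factor $\phi(x+r)-\phi(x-r)$ --- to the single Gaussian identity $\int_\rset(\phi(x+r)-\phi(x-r))x\phi(x)\,\rmd x = \dot{\phi}(r/\sqrt{2})$, which is stated without proof (verifying it is a Gaussian product computation requiring completion of the square); part (iii) then follows from $r_0\neq 0$ and the continuity of $\dot{\phi}$. You instead rotate to the independent coordinates $U=X-Y$, $V=X+Y$, exploit the fact that the indicator depends on $U$ alone, and reduce all three coefficients to the two one-dimensional quantities $\PP(|U|\leq r)$ and $\PE[\1_{\{|U|\leq r\}}U^2]$, which you evaluate by a routine integration by parts. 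This buys you two things: the identity $\alpha_{2,0}=-\alpha_{1,1}$ drops out structurally (from $4XY=V^2-U^2$ and $4H_2(X)=(U+V)^2-4$, with the $V$-contributions factoring off by independence) before any integral is computed, and the argument is fully self-contained, with no unproved convolution identity. It also makes (iii) slightly sharper: you exhibit an explicit admissible $\eta$ (any $\eta\in(0,r_0)$) and need no continuity argument, only that $\dot{\phi}(r/\sqrt{2})=-(r/\sqrt{2})\phi(r/\sqrt{2})$ vanishes exactly at $r=0$ together with $r_0=\sqrt{2}\,\Phi^{-1}(5/8)>0$. What the paper's route buys, in exchange, is brevity and reuse: the kernel $\Phi(x+r)-\Phi(x-r)$ appearing in its identity is exactly the function $k_1(x,r)$ of (\ref{eq:h_1:grassberger}), which recurs throughout the other proofs.
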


\begin{proof}[Proof of Lemma \ref{lem:hermite_rank_T1(F_n)}]

The proof of (i) follows from the symmetry of the Gaussian
distribution and the proof of (ii) relies on
the following identity: for all $r\in\rset$,
$$\int_\rset (\phi(x+r)-\phi(x-r))x\phi(x)\rmd x
=\dot{\phi}(r/\sqrt{2}).$$

Let us now turn to the proof of (iii). $\dot{\phi}(r/\sqrt{2})$ is
equal to zero only if $r=0$.
By (\ref{eq:c(Phi)}), $r_0$ is such that $\Phi(r_0/\sqrt{2})=5/8$, and hence
is different from 0. The existence of $\eta$
follows from the continuity of $\dot{\phi}$.
\end{proof}

\section{Conclusion}\label{sec:conc}

In this paper, we studied 
the asymptotic properties of the robust scale estimator $\textrm{Q}_n$
(\cite{Rousseeuw:Croux:1993}) and of the robust autocovariance estimator
$\widehat{\gamma}_Q$ (\cite{Genton:Ma:2000}), for short and
long-range dependent processes. We showed that the
asymptotic variance of these estimators is optimal, or close to it,
and we verified, by using simulations, that these estimators are indeed
robust in the presence of outliers.
Complete proofs of the asymptotic properties of the robust scale
$\textrm{Q}_n$ and the covariance estimator $\widehat{\gamma}_Q$ are provided for Gaussian stationary processes.
The central limit theorems for $\textrm{Q}_n$ and $\widehat{\gamma}_Q$ were
obtained. In all cases, the rate of convergence
of the estimators is $\sqrt{n}$, except for long-range dependent
processes with $D \in (0,1/2)$, for which the rate is $n^{D}L(n)^{-1}$.
Empirical Monte-Carlo experiments were conducted in order to
illustrate the finite sample size properties of the estimators.
The robustness of $\textrm{Q}_n$ and $\widehat{\gamma}_Q$ were also investigated
when the process contained outliers. The theoretical results and the
empirical evidence strongly suggest
the use of these estimators as an alternative to estimate the scale and
the autocovariance structure of the process. The classical scale 
and autocovariance  estimators  were also considered as means of
comparison. All estimators showed similar empirical accuracy when
the data did not contain outliers. However, the classical estimators were
significantly affected when additive outliers are present.
The robust ones, however, were much less affected.

\bibliographystyle{chicago}
\bibliography{robust_cov_biblio}

\end{document}